\theoremstyle{plain}
\newtheorem{thm}{\protect\theoremname}[section]
\theoremstyle{plain}
\newtheorem{prop}[thm]{\protect\propositionname}
\theoremstyle{plain}
\newtheorem{lem}[thm]{\protect\lemmaname}
\newcommand{\medrightarrow}{\mathrel{\joinrel\to}}
\providecommand{\lemmaname}{Lemma}
\providecommand{\propositionname}{Proposition}
\providecommand{\theoremname}{Theorem}
\begin{document}
\title{Rank Distributions for Independent\\
Normals with a Single Outlier}
\author{Philip T.\ Labo\thanks{Author email: plabo@alumni.stanford.edu.}}
\maketitle
\begin{abstract}
Thurstone\textquoteright s latent-normal model, introduced a century
ago to describe human preferences in psychometrics (\citet{T27a,T27b,T27c}),
remains a cornerstone for modeling random rankings. Yet when the underlying
normals differ in distribution, the joint law of ranks $R_{i}\coloneqq\sum_{j=1}^{n}\mathbf{1}_{X_{j}\leq X_{i}}$
is virtually unexplored. We study the simplest non-identically-distributed
case: $n+1$ independent normals with $X_{0}\sim\mathcal{N}\left(\mu_{0},\,\sigma_{0}^{2}\right)$
and $X_{i}\sim\mathcal{N}\left(\mu,\,\sigma^{2}\right)$ for $1\leq i\leq n$.
Here, $\left(\left.R_{0}\right|X_{0}\right)\sim1+\mathrm{Binomial}\left(n,\Phi\left(\nicefrac{\left(X_{0}-\mu\right)}{\sigma}\right)\right)$,
and the success probability $\Phi\left(\nicefrac{\left(X_{0}-\mu\right)}{\sigma}\right)$
is accurately modeled by a beta distribution. Exploiting beta-binomial
conjugacy, we observe that $R_{0}-1$ follows a beta-binomial law,
which then yields a precise approximation for the joint distribution
of $\left(R_{0},R_{i_{1}},\ldots,R_{i_{m}}\right)$. We derive closed-form
expressions for $\mathbb{E}R_{i}$, $\mathrm{Cov}\left(R_{i},R_{j}\right)$,
and the limiting distributions of $\left(R_{0},R_{i_{1}},\ldots,R_{i_{m}}\right)$
as key parameters grow large or small.
\end{abstract}

\section{Introduction \label{sec:Introduction}}

Ranked lists permeate everyday life---from Google search results
and Facebook newsfeeds to supermarket checkout lines and university
rankings. A century ago, \citet{T27a,T27b,T27c} proposed modeling
individual preferences by treating the components of a multivariate
normal vector $\mathbf{X}\sim\mathcal{N}_{n}\left(\boldsymbol{\mu},\boldsymbol{\Sigma}\right)$
as latent utilities. Since then, numerous researchers (\emph{e.g.},
\citet{D50,M51,H81a,H81b,D86,LB94,YB99,Y00}) have explored many facets
of this normal approach.\footnote{Section \ref{subsec:Rank-Distributions} casts an even wider net,
reviewing ranking probabilities when latent utilities come from diverse
distributions---not just the normal.} Nevertheless, when these normal utilities are independent but not
identically distributed, the resulting rank-vector $\mathbf{R}\coloneqq\left(R_{1},R_{2},\ldots,R_{n}\right)$
remains poorly understood. In this paper, we investigate the simplest
non-i.i.d.\ scenario.

Assume that $X_{0}\sim\mathcal{N}\left(\mu_{0},\sigma_{0}^{2}\right)$
and, independently, $X_{1},\ldots,X_{n}\stackrel{\mathrm{iid}}{\sim}\mathcal{N}\left(\mu,\sigma^{2}\right)$.
Our goal is to determine the distribution of the rank variable 
\begin{equation}
R_{0}\coloneqq\sum_{i=0}^{n}\mathbf{1}_{\left\{ X_{i}\leq X_{0}\right\} }=1+\sum_{i=1}^{n}\mathbf{1}_{\left\{ X_{i}\leq X_{0}\right\} }.\label{eq:defR0}
\end{equation}
In other words, we are interested in the distribution of the rank
of the normal random variable $X_{0}$, whose distribution typically
differs from that of the others.\footnote{Here, the indicator function $\mathbf{1}_{\mathcal{S}}$ equals 1
if the statement $\mathcal{S}$ is true and 0 otherwise.} We ask, how does $\mathscr{L}\left(R_{0}\right)$ depend on the parameters
$\mu_{0},\mu\in\mathbb{R}$, $\sigma_{0},\sigma\in\left(0,\infty\right)$,
and the size of the in-group $n\geq1$?\footnote{The notation $\mathscr{L}\left(X\right)$ indicates the law (or distribution)
of the random variable $X$.} Intuitively, we suspect the following: 
\begin{itemize}
\item Mean Effects: If $\mu_{0}\ll\mu$, then $X_{0}$ will tend to be smaller
than the other observations, so $R_{0}$ will likely be near 1. If
$\mu_{0}\gg\mu$, then $X_{0}$ will tend to be larger than the other
observations, so $R_{0}$ will likely be near $n+1$.
\item Variance Effects: If $\sigma_{0}\ll\sigma$, then the variation in
$X_{0}$ is small relative to the other variations, suggesting that
$R_{0}$ will cluster around $\nicefrac{n}{2}+1$. If $\sigma_{0}\gg\sigma$,
then the variation in $X_{0}$ is large relative to the other variations,
suggesting that the absolute deviation $\left|R_{0}-\left(\nicefrac{n}{2}+1\right)\right|$
will be approximately $\nicefrac{n}{2}$.
\end{itemize}
We additionally seek to determine the distributions of the remaining
rank variables, $R_{1},R_{2},\ldots,R_{n}$, which correspond to the
positions of $X_{1},X_{2},\ldots,X_{n}$ when all $n+1$ values are
ordered. Conditional on $R_{0}=k\in\left[n+1\right]$, the remaining
ranks are uniformly distributed over the set $\left[n+1\right]\setminus\left\{ k\right\} $.\footnote{Here, for any positive integer $m$, the notation $\left[m\right]$
stands in for $\left\{ 1,2,\ldots,m\right\} $.}

Equation (\ref{eq:defR0}) leads to a key observation underlying our
results. In particular, note that 
\begin{equation}
\left(R_{0}\left|X_{0}\right.\right)\sim1+\mathrm{Binomial}\left(n,\,\Phi\left(\frac{X_{0}-\mu}{\sigma}\right)\right),\label{eq:prior}
\end{equation}
where $\Phi$ denotes the cumulative distribution function (CDF) for
the standard normal distribution. Consequently, we obtain
\begin{equation}
\Pr\left(R_{0}=k\right)=\binom{n}{k-1}\mathbb{E}\left[\Phi\left(\frac{X_{0}-\mu}{\sigma}\right)^{k-1}\Phi\left(\frac{\mu-X_{0}}{\sigma}\right)^{n+1-k}\right],\label{eq:PrR0eqk}
\end{equation}
for $1\leq k\leq n+1$. While (\ref{eq:PrR0eqk}) looks intractable,
computing $\mathscr{L}\left(R_{0}\right)$ would be straightforward
from (\ref{eq:prior}) if $\Phi\left(\nicefrac{\left(X_{0}-\mu\right)}{\sigma}\right)$
were beta-distributed. This leads to our second key finding: $\Phi\left(\nicefrac{\left(X_{0}-\mu\right)}{\sigma}\right)$
is approximately beta-distributed. Section \ref{sec:Prior-Dist} proves
this claim and derives the corresponding beta distribution.

The distribution of $\Phi\left(\nicefrac{\left(X_{0}-\mu\right)}{\sigma}\right)$
does not vary independently with each of the four parameters $\mu_{0}$,
$\sigma_{0}$, $\mu$, and $\sigma$. To see this, fix $y\in\left(0,1\right)$
and note that
\begin{equation}
\Pr\left(\Phi\left(\frac{X_{0}-\mu}{\sigma}\right)\leq y\right)=\Pr\left(\frac{X_{0}-\mu_{0}}{\sigma_{0}}\leq\frac{\mu-\mu_{0}}{\sigma_{0}}+\frac{\sigma}{\sigma_{0}}\Phi^{-1}\left(y\right)\right),\label{eq:startDistZ}
\end{equation}
for $\Phi^{-1}$ the inverse standard normal CDF. By defining $\delta\coloneqq\nicefrac{\left(\mu-\mu_{0}\right)}{\sigma_{0}}$
and $\rho\coloneqq\nicefrac{\sigma}{\sigma_{0}}$, the expression
above becomes
\begin{equation}
F_{\rho,\delta}\left(y\right)\coloneqq\Phi\left(\delta+\rho\Phi^{-1}\left(y\right)\right)=\Pr\left(\frac{X_{0}-\mu_{0}}{\sigma_{0}}\leq\delta+\rho\Phi^{-1}\left(y\right)\right).\label{eq:endDistZ}
\end{equation}
The parameter $\delta\in\mathbb{R}$ standardizes the mean of $X_{1},X_{2},\ldots,X_{n}$
using the mean and standard deviation of $X_{0}$. Meanwhile, $\rho\in\left(0,\infty\right)$
gives the corresponding ratio of the two standard deviations. $\delta$
and $\rho$ fully characterize the distribution of $\Phi\left(\nicefrac{\left(X_{0}-\mu\right)}{\sigma}\right)$
and so assume a central role in our analysis.\footnote{WLOG one can focus on $X_{0}\sim\mathcal{N}\left(0,\,1\right)$ and
independent $X_{1},X_{2},\ldots,X_{n}\stackrel{\mathrm{iid}}{\sim}\mathcal{N}\left(\delta,\,\rho^{2}\right)$.\label{fn:one-case}} Note that if $\delta=0$ and $\rho=1$ (\emph{i.e.}, if $\mu=\mu_{0}$
and $\sigma=\sigma_{0}$), then $\Phi\left(\nicefrac{\left(X_{0}-\mu\right)}{\sigma}\right)$
is uniformly distributed on $\left(0,1\right)$. Moreover, let $\phi\coloneqq\Phi'$
denote the standard normal density. Then, the density of $\Phi\left(\nicefrac{\left(X_{0}-\mu\right)}{\sigma}\right)$
is given by
\begin{align}
f_{\rho,\delta}\left(y\right) & \coloneqq\rho\left.\phi\left(\delta+\rho\Phi^{-1}\left(y\right)\right)\right/\phi\left(\Phi^{-1}\left(y\right)\right)\label{eq:density}\\
 & =\rho\exp\left\{ \nicefrac{-1}{2}\left[\left(\rho^{2}-1\right)\Phi^{-1}\left(y\right){}^{2}+2\rho\delta\Phi^{-1}\left(y\right)+\delta^{2}\right]\right\} .\label{eq:densityExplicit}
\end{align}
Section \ref{subsec:Rank-Distributions} reviews rank distributions
based on order statistics, while Section \ref{subsec:Argument-Outline}
outlines the paper's structure.

\subsection{Order Statistics Models for Rank Distributions \label{subsec:Rank-Distributions}}

In the i.i.d.\ setting, where $X_{1},X_{2},\ldots,X_{n}\stackrel{\mathrm{iid}}{\sim}\mathcal{N}\left(\mu,\sigma^{2}\right)$
and $R_{i}$ denotes the rank of $X_{i}$ (as defined in (\ref{eq:defR0})),
the rank vector $\mathbf{R}$ is uniformly distributed over $\boldsymbol{\Pi}_{n}$,
the set of all permutations of $\left[n\right]$. That is, for every
$\mathbf{r}\in\boldsymbol{\Pi}_{n}$, $\Pr\left(\mathbf{R}=\mathbf{r}\right)=\nicefrac{1}{n!}$.
This result holds for any continuous distribution $F$.

The literature identifies four principal approaches for defining non-uniform
probability distributions over $\boldsymbol{\Pi}_{n}$ (\citet{CFV91,AY14}):
order statistics models, paired comparison models, distance-based
models, and multi-stage models. Given our focus, we define order statistics
models as follows. First, fix an arbitrary ranking $\mathbf{r}\in\boldsymbol{\Pi}_{n}$
and define indices $o_{j}$ so that $r_{o_{j}}=j$; that is, $o_{j}$
denotes the index of the $j$th smallest observation among the continuous
random variables $X_{1},X_{2},\ldots,X_{n}$ (the $X_{i}$ need not
be independent). An order statistics model then specifies that $\Pr\left(\mathbf{R}=\mathbf{r}\right)=\Pr\left(X_{o_{1}}<X_{o_{2}}<\cdots<X_{o_{n}}\right)$.
In other words, the probability assigned to $\mathbf{r}$ is the probability
that the latent $X_{i}$\textquoteright s occur in the order defined
by $\mathbf{r}$.

Order statistics models sometimes yield closed-form expressions for
ranking probabilities. For instance, \citet{MO67} showed that if
the $X_{i}$ are independent and $X_{i}\sim\mathrm{Exp}\left(\lambda_{i}\right)$,
then the ranking probability is

\begin{equation}
\Pr\left(\mathbf{R}=\mathbf{r}\right)=\prod_{j=1}^{n}\frac{\lambda_{o_{j}}}{\sum_{k=j}^{n}\lambda_{o_{k}}}.\label{eq:ExpRankProb}
\end{equation}
This result follows from the memoryless property of the exponential
distribution, which ensures that for any nonempty subset $\mathcal{S}\subset\left[n\right]$,
the minimum of $\left\{ X_{i},i\in\mathcal{S}\right\} $ has an $\mathrm{Exp}\left(\sum_{i\in\mathcal{S}}\lambda_{i}\right)$
distribution. Moreover, the exponential setting serves as an example
of both an order statistics model and a multi-stage model: the factor
corresponding to $j=1$ gives the probability that the $X_{i}$ with
$r_{i}=1$ is smallest, and conditional on this, the factor for $j=2$
gives the probability that the $X_{i}$ with $r_{i}=2$ is the next
smallest, and so on.

We now consider independent $X_{i}$ with $X_{i}\sim\mathrm{Gumbel}\left(\mu_{i},\sigma_{i}\right)$.
Since $e^{\nicefrac{-X_{i}}{\sigma_{i}}}\sim\mathrm{Exp}\left(e^{\nicefrac{\mu_{i}}{\sigma_{i}}}\right)$,
Equation (\ref{eq:ExpRankProb}) implies that
\begin{equation}
\Pr\left(\mathbf{R}=\mathbf{r}\right)=\prod_{j=1}^{n}\frac{\exp\left(\frac{\mu_{o_{n-j+1}}}{\sigma_{o_{n-j+1}}}\right)}{\sum_{k=1}^{n-j+1}\exp\left(\frac{\mu_{o_{k}}}{\sigma_{o_{k}}}\right)},\label{eq:GumbelRankProb}
\end{equation}
as shown by \citet{L59} and \citet{Y77}. The negative sign in the
exponent of $e^{\nicefrac{-X_{i}}{\sigma_{i}}}$ reverses the order
of traversal relative to the exponential case, so that the factor
corresponding to $j=1$ gives the probability that the $X_{i}$ with
$r_{i}=n$ is largest, and conditional on this, the factor for $j=2$
gives the probability that the $X_{i}$ with $r_{i}=n-1$ is the second
largest, and so on. Moreover, for any monotonically increasing function
$f$, vectors $\mathbf{Z}$ and $\left(f\left(Z_{1}\right),f\left(Z_{2}\right),\ldots,f\left(Z_{n}\right)\right)$
have identical rank distributions.

While the $\mathcal{O}\left(n^{2}\right)$ computation in (\ref{eq:ExpRankProb})
uses independent $X_{i}\sim\mathrm{Gamma}\left(1,\lambda_{i}\right)$,
we now consider a more general setting with independent $X_{i}\sim\mathrm{Gamma}\left(s,\lambda_{i}\right)$
and $s\in\left\{ 1,2,\ldots\right\} $. Since the sum of independent
$\xi_{i,1},\ldots,\xi_{i,s}\sim\textrm{Exp}\left(\lambda_{i}\right)$
follows a $\mathrm{Gamma}\left(s,\lambda_{i}\right)$ distribution,
\citet{H83} and \citet{S90} recast the problem as a race among $n$
independent Poisson processes $N_{i,t}\sim\mathrm{Poisson}\left(t\lambda_{i}\right)$,
with each racing to reach $s$ events before exiting. Setting $i_{n}\equiv0$
and defining $\Lambda_{j}\coloneqq\sum_{k=j}^{n}\lambda_{o_{k}}$,
\citet{H83} gives 
\begin{equation}
\Pr\left(\mathbf{R}=\mathbf{r}\right)=\sum_{i_{n-1}=0}^{s-1+i_{n}}\cdots\sum_{i_{1}=0}^{s-1+i_{2}}\prod_{j=1}^{n-1}\binom{s-1+i_{j}}{i_{j}}\left(\frac{\Lambda_{j+1}}{\Lambda_{j}}\right)^{i_{j}}\left(\frac{\lambda_{o_{j}}}{\Lambda_{j}}\right)^{s}.\label{eq:GammaRankProb}
\end{equation}
This formulation involves $\mathcal{O}\left(s^{n-1}\right)$ products
of negative binomial probabilities, each corresponding to the event
that process $o_{j}$ registers $i_{j}$ failures before achieving
$s$ successes, for $1\leq j<n$. The overall probability $\Pr\left(\mathbf{R}=\mathbf{r}\right)$
is obtained by summing these products over all numbers of failures
that conform with $\mathbf{r}$. While \citet{S90}'s expression similarly
involves $\mathcal{O}\left(s^{n-1}\right)$ summands---making direct
computation intractable---\citet{S90} advocates for more tractable
approximations. Noting that $\mathrm{Gamma}\left(s,\lambda_{i}\right)$
approaches $\mathcal{N}\left(\nicefrac{s}{\lambda_{i}},\nicefrac{s}{\lambda_{i}^{2}}\right)$
as $s\to\infty$, Section \ref{subsec:Benchmarking-Approximations}
approximates (\ref{eq:GammaRankProb}) in the setting with $s$ large
and $\lambda_{0}\ne\lambda_{1}=\lambda_{2}=\cdots=\lambda_{n}$.

Due to the widespread occurrence of the normal distribution in nature
and science, the originator of order statistics models for rank distributions
assumed latent variables $\mathbf{X}\sim\mathcal{N}_{n}\left(\boldsymbol{\mu},\boldsymbol{\Sigma}\right)$
(\citet{T27a,T27b,T27c}). Although closed-form expressions for $\Pr\left(\mathbf{R}=\mathbf{r}\right)$
exist in the exponential, Gumbel, and gamma cases (see (\ref{eq:ExpRankProb})--(\ref{eq:GammaRankProb})),
no such expressions have been derived for the normal setting---even
when the $X_{i}\sim\mathcal{N}\left(\mu_{i},\sigma_{i}^{2}\right)$
are independent. In this case, the probability is expressed as a sequence
of nested integrals:
\begin{equation}
\Pr\left(\mathbf{R}=\mathbf{r}\right)=\int_{-\infty}^{\infty}f_{o_{1},x_{1}}\int_{x_{1}}^{\infty}f_{o_{2},x_{2}}\cdots\int_{x_{n-1}}^{\infty}f_{o_{n},x_{n}}\,dx_{n}\cdots\,dx_{2}\:dx_{1},\label{eq:NormalRankProb}
\end{equation}
where $f_{i,x}\coloneqq\left.\phi\left(\nicefrac{\left(x-\mu_{i}\right)}{\sigma_{i}}\right)\right/\sigma_{i}$
(\citet{D50}). Although this formulation retains the nested structure
of the gamma case (\ref{eq:GammaRankProb}), replacing sums with integrals
makes it both computationally demanding and challenging to evaluate
accurately. As a result, published applications of (\ref{eq:NormalRankProb})
are generally limited to cases with $n\leq4$ (\emph{e.g.}, \citet{H81a,D86,LB94}).

\citet{H81a} approximates (\ref{eq:NormalRankProb}) under the conditions
$\mu_{i}\approx0$ and $\sigma_{i}=1$, for $1\leq i\leq n$. Define
$\mu_{\left(i\right)}\coloneqq\mathbb{E}Z_{\left(i\right)}\approx\Phi^{-1}\left(\nicefrac{\left(i-\nicefrac{3}{8}\right)}{\left(n-\nicefrac{3}{4}\right)}\right)$
as the mean of the $i$th order statistics of $n$ independent standard
normals (see \citet{B58}), and let $\psi_{k}\coloneqq\Phi^{-1}\left(\nicefrac{1}{k}\right)$.
Using a Taylor series expansion, \citet{H81a} shows that 
\begin{equation}
\Pr\left(\mathbf{R}=\mathbf{r}\right)\approx\Phi\left(\psi_{n!}+\frac{\sum_{i=1}^{n}\mu_{o_{i}}\mu_{\left(i\right)}}{n!\phi\left(\psi_{n!}\right)}\right).\label{eq:H81Rvec}
\end{equation}
\citet{H81a} and \citet{LB94} then put $j\ne i$ and sum arguments
to $\Phi$ on the right-hand side of (\ref{eq:H81Rvec}) to obtain
\begin{align}
\Pr\left(R_{i}=1\right) & \approx\Phi\left(\psi_{n}+\frac{\mu_{i}\mu{}_{\left(1\right)}}{\left(n-1\right)\phi\left(\psi_{n}\right)}\right),\label{eq:H81Ri1}\\
\Pr\left(R_{i}=1,R_{j}=2\right) & \approx\Phi\left(\psi_{n\left(n-1\right)}+\frac{\mu_{i}\mu{}_{\left(1\right)}+\mu_{j}\mu{}_{\left(2\right)}}{n\left(n-1\right)\phi\left(\psi_{n\left(n-1\right)}\right)}\right.\\
 & \hphantom{aaaaaaaaaaaaa}\left.+\frac{\left(\mu_{i}+\mu_{j}\right)\left(\mu{}_{\left(1\right)}+\mu{}_{\left(2\right)}\right)}{n\left(n-1\right)\left(n-2\right)\phi\left(\psi_{n\left(n-1\right)}\right)}\right).\label{eq:H81Ri1Rj2}
\end{align}
While our approximations assume at most one outlier, they remain accurate
for any choice of $\mu_{0}$, $\sigma_{0}$, $\mu$, and $\sigma$.
In contrast, the formulas in (\ref{eq:H81Rvec})--(\ref{eq:H81Ri1Rj2})
can handle up to $n$ distinct distributions but require $\mu_{i}\approx0$
and $\sigma_{i}=1$ for all $1\leq i\leq n$. Section \ref{subsec:Benchmarking-Approximations}
compares these methods in the single-outlier case with $\sigma_{0}=\sigma=1$.
Our approach delivers superior performance as $\left|\mu_{0}-\mu\right|$
grows (see Figure \ref{fig:NormalRankProb}).

\subsection{Outline \label{subsec:Argument-Outline}}

Section \ref{sec:Prior-Dist} begins by deriving the parameters $a_{\rho,\delta}$
and $b_{\rho,\delta}$. We then show that the transformed variable
$\Phi\left(\nicefrac{\left(X_{0}-\mu\right)}{\sigma}\right)$ is roughly
distributed as $\mathrm{Beta}\left(a_{\rho,\delta},b_{\rho,\delta}\right)$.
Building on this foundation, Section \ref{sec:Distribution-of-Rvec}
approximates $\mathscr{L}\left(R_{0},R_{i_{1}},\ldots,R_{i_{m}}\right)$
for indices $1\leq i_{1}<i_{2}<\cdots<i_{m}\leq n$. In Section (\ref{sec:Applications}),
we consider two applications of these main results, demonstrating
their practical impact. Finally, Section \ref{sec:Discussion-and-Conclusions}
wraps up with a discussion of our findings and potential future directions.
Rigorous proofs of key results are provided in the Appendices.

\section{The Distribution of $\Phi\left(\nicefrac{\left(X_{0}-\mu\right)}{\sigma}\right)$
\label{sec:Prior-Dist}}

Equation (\ref{eq:prior}) shows that the prior $\mathscr{L}\left(\Phi\left(\nicefrac{\left(X_{0}-\mu\right)}{\sigma}\right)\right)$
underpins the derivation of $\mathscr{L}\left(R_{0}\right)$. We approximate
it by $\mathrm{Beta}\left(a_{\rho,\delta},b_{\rho,\delta}\right)$
via three steps. First, Section \ref{subsec:Mean-and-Variance} derives
expressions for the mean and variance of $\Phi\left(\nicefrac{\left(X_{0}-\mu\right)}{\sigma}\right)$.
Next, Section \ref{subsec:A-Beta-Approximation} chooses $a_{\rho,\delta}$
and $b_{\rho,\delta}$ so that $\mathrm{Beta}\left(a_{\rho,\delta},b_{\rho,\delta}\right)$
matches those moments. Finally, Section \ref{subsec:Shared-Limiting-Distributions}
shows that, as $\rho$ or $\delta$ or both approach large or small
values, the beta approximation converges in distribution to $\mathscr{L}\left(\Phi\left(\nicefrac{\left(X_{0}-\mu\right)}{\sigma}\right)\right)$.

\subsection{Mean and Variance \label{subsec:Mean-and-Variance}}

Before computing the mean and variance of $Z_{\rho,\delta}\coloneqq\Phi\left(\nicefrac{\left(X_{0}-\mu\right)}{\sigma}\right)$,
we observe from (\ref{eq:densityExplicit}) that $f_{\rho,-\delta}\left(y\right)=f_{\rho,\delta}\left(1-y\right)$.
Consequently, for any $k\geq1$,
\begin{equation}
\mathbb{E}Z_{\rho,-\delta}^{k}=\sum_{j=0}^{k}\binom{k}{j}\left(-1\right)^{j}\mathbb{E}Z_{\rho,\delta}^{j}.\label{eq:expEquation}
\end{equation}
In particular, setting $k=1$ gives $\mathbb{E}Z_{\rho,-\delta}=1-\mathbb{E}Z_{\rho,\delta}$
(hence $\mathbb{E}Z_{\rho,0}=\nicefrac{1}{2}$), and setting $k=2$
yields $\mathrm{Var}\left(Z_{\rho,-\delta}\right)=\mathrm{Var}\left(Z_{\rho,\delta}\right)$.
These symmetries mirror those between $\mathrm{Beta}\left(\alpha,\beta\right)$
and $\mathrm{Beta}\left(\beta,\alpha\right)$. 

The next theorem---whose proof appears in Appendix \ref{sec:Derivations-for-=0000A72.1}---gives
an explicit formula for $\mathbb{E}Z_{\rho,\delta}$. 

\begin{restatable}{theorem}{genMean}

\label{thm:theGenMean}If $Z_{\rho,\delta}\sim F_{\rho,\delta}$ as
in (\ref{eq:endDistZ}), $\mathbb{E}Z_{\rho,\delta}=\Pr\left(X_{1}\leq X_{0}\right)=\Phi\left(\nicefrac{-\delta}{\sqrt{\rho^{2}+1}}\right)$.

\end{restatable}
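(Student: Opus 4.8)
The plan is to compute $\mathbb{E}Z_{\rho,\delta}$ by recognizing that the expectation of a CDF evaluated at a random point is itself a probability. Recall from footnote~\ref{fn:one-case} that we may take $X_{0}\sim\mathcal{N}\left(0,1\right)$ and $X_{1},\ldots,X_{n}\stackrel{\mathrm{iid}}{\sim}\mathcal{N}\left(\delta,\rho^{2}\right)$, so that $\Phi\left(\nicefrac{\left(X_{0}-\mu\right)}{\sigma}\right)=\Phi\left(\nicefrac{\left(X_{0}-\delta\right)}{\rho}\right)$ in this normalization; more to the point, $Z_{\rho,\delta}$ as defined via (\ref{eq:endDistZ}) has the same law as $\Phi\left(\left(X_{0}-\delta\right)/\rho\right)$ where $X_{0}$ is standard normal. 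The key observation is that $\Phi\left(\left(x-\delta\right)/\rho\right)=\Pr\left(X_{1}\leq x\mid X_{0}=x\right)$, i.e.\ it is exactly the conditional probability that an independent $\mathcal{N}\left(\delta,\rho^{2}\right)$ variable falls below $x$. Therefore
\begin{equation}
\mathbb{E}Z_{\rho,\delta}=\mathbb{E}_{X_{0}}\left[\Pr\left(X_{1}\leq X_{0}\mid X_{0}\right)\right]=\Pr\left(X_{1}\leq X_{0}\right),
\end{equation}
by the tower property. This already identifies $\mathbb{E}Z_{\rho,\delta}$ with $\Pr\left(X_{1}\leq X_{0}\right)$, the first claimed equality.

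The second step is the routine evaluation of $\Pr\left(X_{1}\leq X_{0}\right)=\Pr\left(X_{1}-X_{0}\leq 0\right)$. With $X_{0}\sim\mathcal{N}\left(0,1\right)$ and $X_{1}\sim\mathcal{N}\left(\delta,\rho^{2}\right)$ independent, the difference $X_{1}-X_{0}$ is normal with mean $\delta$ and variance $\rho^{2}+1$, so
\begin{equation}
\Pr\left(X_{1}-X_{0}\leq 0\right)=\Phi\left(\frac{0-\delta}{\sqrt{\rho^{2}+1}}\right)=\Phi\left(\frac{-\delta}{\sqrt{\rho^{2}+1}}\right),
\end{equation}
which is the desired closed form. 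For readers who prefer to work directly from the density (\ref{eq:densityExplicit}), the same answer follows from $\mathbb{E}Z_{\rho,\delta}=\int_{0}^{1}y\,f_{\rho,\delta}\left(y\right)\,dy$; substituting $u=\Phi^{-1}\left(y\right)$ turns this into $\int_{-\infty}^{\infty}\Phi\left(u\right)\,\rho\,\phi\left(\delta+\rho u\right)\,du$, and a further change of variables together with the standard Gaussian convolution identity $\int\Phi\left(u\right)\phi\left(\left(u-a\right)/b\right)/b\,du=\Phi\left(a/\sqrt{1+b^{2}}\right)$ recovers $\Phi\left(-\delta/\sqrt{\rho^{2}+1}\right)$ after matching $a$ and $b$ to the parameters.

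There is essentially no hard part here; the proof is a one-line probabilistic argument followed by a Gaussian-difference calculation. The only point requiring a little care is bookkeeping the normalization in footnote~\ref{fn:one-case}: one must confirm that the law $F_{\rho,\delta}$ of (\ref{eq:endDistZ}), which was derived from general $\mu_{0},\sigma_{0},\mu,\sigma$, genuinely coincides with the law of $\Phi\left(\left(X_{0}-\mu\right)/\sigma\right)$ when $X_{0}\sim\mathcal{N}\left(\mu_{0},\sigma_{0}^{2}\right)$ and $X_{1}\sim\mathcal{N}\left(\mu,\sigma^{2}\right)$ — but this is exactly the content of (\ref{eq:startDistZ})–(\ref{eq:endDistZ}) with $\delta=\left(\mu-\mu_{0}\right)/\sigma_{0}$ and $\rho=\sigma/\sigma_{0}$, and the event $\left\{X_{1}\leq X_{0}\right\}$ is invariant under the common affine rescaling $x\mapsto\left(x-\mu_{0}\right)/\sigma_{0}$, so $\Pr\left(X_{1}\leq X_{0}\right)$ is unchanged. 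Finally, as a sanity check, setting $\delta=0$ gives $\mathbb{E}Z_{\rho,0}=\Phi\left(0\right)=\nicefrac{1}{2}$, consistent with the symmetry $\mathbb{E}Z_{\rho,-\delta}=1-\mathbb{E}Z_{\rho,\delta}$ noted after (\ref{eq:expEquation}), and consistent with $Z_{\rho,\delta}$ being genuinely uniform when additionally $\rho=1$.
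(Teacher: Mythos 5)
Your proof is correct. The first half---identifying $\mathbb{E}Z_{\rho,\delta}$ with $\mathbb{E}\left[\Pr\left(X_{1}\leq X_{0}\mid X_{0}\right)\right]=\Pr\left(X_{1}\leq X_{0}\right)$ via the change of variables $u=\Phi^{-1}(z)$ and the tower property---is exactly the paper's first step. For the second equality the paper takes a slightly more elaborate route: it writes $\Pr\left(X_{1}\leq X_{0}\right)$ as the probability that a bivariate normal vector lands in the half-plane $\left\{ x\leq y\right\} $, rotates $\mathbb{R}^{2}$ counterclockwise by $45^{\circ}$ so that this half-plane becomes $\left\{ x\leq0\right\} $, and reads off the answer from the marginal $Y_{1}\sim\mathcal{N}\left(\nicefrac{\delta}{\sqrt{2}},\nicefrac{\left(\rho^{2}+1\right)}{2}\right)$. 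Your observation that $X_{1}-X_{0}\sim\mathcal{N}\left(\delta,\rho^{2}+1\right)$ is mathematically identical---the rotated coordinate $Y_{1}$ is precisely $\left(X_{1}-X_{0}\right)/\sqrt{2}$---and is the more economical way to say it; the paper's rotation framing mainly serves to set up the genuinely three-dimensional rotation needed in the proof of Theorem \ref{thm:theGenVar}, where no reduction to a single linear combination is available. Your closing bookkeeping on the normalization of footnote \ref{fn:one-case} and the $\delta=0$ sanity check are both sound.
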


We confirm that $\mathbb{E}Z_{\rho,-\delta}=1-\mathbb{E}Z_{\rho,\delta}$
and $\mathbb{E}Z_{\rho,0}=\nicefrac{1}{2}$. Before computing $\mathrm{Var}\left(Z_{\rho,\delta}\right)$,
we note from (\ref{eq:densityExplicit}) that the density $f_{\rho,\delta}\left(y\right)$
attains its maximum at: $y=\Phi\left(\nicefrac{-\rho\delta}{\left(\rho^{2}-1\right)}\right)$
if $\rho>1$; $y=0$ and $y=1$ if $\rho<1$; $y=0$ if $\rho=1$
and $\delta>0$; $y=1$ if $\rho=1$ and $\delta<0$; and any $0\leq y\leq1$
if $\rho=1$ and $\delta=0$. These mode locations align closely with
those of a beta distribution under similar parameter configurations
(see \citet{L24}).

The following theorem gives an intricate integral representation of
$\mathrm{Var}\left(Z_{\rho,\delta}\right)$. Its proof appears in
Appendix \ref{sec:Derivations-for-=0000A72.1}.

\begin{restatable}{theorem}{genVar}

\label{thm:theGenVar}Define, for any $\theta\in\mathbb{R}$,
\begin{align}
B_{\rho,\delta}\left(\theta\right) & \coloneqq\frac{\sqrt{6}\delta\sin\left(\theta+\nicefrac{\pi}{4}\right)}{\rho^{2}+2},\label{eq:B}\\
A_{\rho}\left(\theta\right) & \coloneqq\frac{\rho^{2}\left(\sin\left(2\theta\right)+2\right)+2\cos^{2}\left(\theta+\nicefrac{\pi}{4}\right)}{2\rho^{2}\left(\rho^{2}+2\right)},\textrm{ and}\label{eq:A}\\
G_{\rho,\delta}\left(\theta\right) & \coloneqq\frac{\sqrt{3}e^{-\frac{\delta^{2}}{\rho^{2}+2}}}{2\pi\rho\sqrt{\rho^{2}+2}}\frac{B_{\rho,\delta}\left(\theta\right)}{\left[2A_{\rho}\left(\theta\right)\right]^{\nicefrac{3}{2}}}\frac{\Phi\left(\frac{B_{\rho,\delta}\left(\theta\right)}{\sqrt{2A_{\rho}\left(\theta\right)}}\right)}{\phi\left(\frac{B_{\rho,\delta}\left(\theta\right)}{\sqrt{2A_{\rho}\left(\theta\right)}}\right)}.\label{eq:G}
\end{align}
If $Z_{\rho,\delta}\sim F_{\rho,\delta}$ as in (\ref{eq:endDistZ}),
then 
\begin{align}
\mathrm{Var}\left(Z_{\rho,\delta}\right) & =\Pr\left(X_{1}\leq X_{0},X_{2}\leq X_{0}\right)-\Pr\left(X_{1}\leq X_{0}\right)\Pr\left(X_{2}\leq X_{0}\right)\label{eq:varProbDiff}\\
 & =\int_{\nicefrac{11\pi}{12}}^{\nicefrac{19\pi}{12}}G_{\rho,\delta}\left(\theta\right)d\theta+\frac{\cos^{-1}\left(\nicefrac{-1}{\left(\rho^{2}+1\right)}\right)}{2\pi\exp\left(\nicefrac{\delta^{2}}{\left(\rho^{2}+2\right)}\right)}-\Phi\left(\nicefrac{-\delta}{\sqrt{\rho^{2}+1}}\right)^{2}.\label{eq:varZrhodelta}
\end{align}

\end{restatable}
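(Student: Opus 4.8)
Throughout I use the normalization of footnote~\ref{fn:one-case}: $X_0\sim\mathcal N(0,1)$ and $X_1,X_2\stackrel{\mathrm{iid}}{\sim}\mathcal N(\delta,\rho^{2})$, all independent, so that $Z_{\rho,\delta}=\Phi\big((X_0-\delta)/\rho\big)$. Identity~(\ref{eq:varProbDiff}) is a warm-up: conditioning on $X_0$ and using that $X_1,X_2$ are conditionally independent given $X_0$,
\[
\mathbb{E}\big[Z_{\rho,\delta}^{2}\mid X_0\big]=\Phi\Big(\tfrac{X_0-\delta}{\rho}\Big)^{2}=\Pr(X_1\le X_0\mid X_0)\Pr(X_2\le X_0\mid X_0)=\Pr(X_1\le X_0,\,X_2\le X_0\mid X_0),
\]
hence $\mathbb{E}Z_{\rho,\delta}^{2}=\Pr(X_1\le X_0,X_2\le X_0)$; combined with $\mathbb{E}Z_{\rho,\delta}=\Pr(X_1\le X_0)=\Pr(X_2\le X_0)$ from Theorem~\ref{thm:theGenMean}, the variance decomposition yields~(\ref{eq:varProbDiff}).

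The real work is~(\ref{eq:varZrhodelta}), i.e.\ a closed form for $p\coloneqq\Pr(X_1\le X_0,X_2\le X_0)$. Writing $X_i=\delta+\rho Y_i$ with $v\coloneqq(X_0,Y_1,Y_2)$ standard normal on $\mathbb R^{3}$, $p$ equals the standard Gaussian measure $\gamma_3(W)$ of the wedge $W=\{\mathbf m_1\!\cdot v\le-\delta\}\cap\{\mathbf m_2\!\cdot v\le-\delta\}$ with $\mathbf m_1=(-1,\rho,0)$, $\mathbf m_2=(-1,0,\rho)$. A short check gives $\mathbf m_1,\mathbf m_2\perp\mathbf e\coloneqq(\rho,1,1)/\sqrt{\rho^{2}+2}$, so the defining constraints involve only the component of $v$ orthogonal to $\mathbf e$; thus $W=\widetilde W\times\mathbb R\mathbf e$ for a planar sector $\widetilde W\subset\mathbf e^{\perp}$ whose apex is the unique $\mathbf p\in\mathbf e^{\perp}$ with $\mathbf m_1\!\cdot\mathbf p=\mathbf m_2\!\cdot\mathbf p=-\delta$, namely $\mathbf p=\operatorname{proj}_{\mathbf e^{\perp}}(\delta,0,0)$, so that $|\mathbf p|^{2}=\delta^{2}-\langle(\delta,0,0),\mathbf e\rangle^{2}=2\delta^{2}/(\rho^{2}+2)$, and whose interior angle is $\alpha=\pi-\arccos\!\big(\tfrac{\mathbf m_1\cdot\mathbf m_2}{|\mathbf m_1|\,|\mathbf m_2|}\big)=\arccos\!\big(-1/(\rho^{2}+1)\big)$. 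Since the standard Gaussian factors across $\mathbb R\mathbf e\oplus\mathbf e^{\perp}$ and integrates to $1$ along $\mathbf e$, we get $p=\gamma_2(\widetilde W)$, the standard planar Gaussian measure of $\widetilde W$.

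Next evaluate $\gamma_2(\widetilde W)$ in polar coordinates $(t,\theta)$ about the apex (after translating $\mathbf p$ to the origin, the relevant density is that of $\mathcal N(-\mathbf p,I_2)$). With $c(\theta)\coloneqq|\mathbf p|\cos(\theta-\arg\mathbf p)$, completing the square in the radial integral gives $\int_0^{\infty}\tfrac{t}{2\pi}\,e^{-\frac12(t^{2}+2tc(\theta)+|\mathbf p|^{2})}\,dt=\tfrac{1}{2\pi}e^{-|\mathbf p|^{2}/2}\big(1-c(\theta)\Phi(-c(\theta))/\phi(c(\theta))\big)$, so
\[
p=\frac{\alpha\,e^{-\delta^{2}/(\rho^{2}+2)}}{2\pi}-\frac{e^{-\delta^{2}/(\rho^{2}+2)}}{2\pi}\int_{\mathrm{arc}}\frac{c(\theta)\,\Phi(-c(\theta))}{\phi(c(\theta))}\,d\theta,
\]
where the arc is the interval of directions (of length $\alpha$) that $\widetilde W$ subtends at $\mathbf p$. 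The first term is exactly the middle term of~(\ref{eq:varZrhodelta}). What remains is to reparametrize: for the right orthonormal frame of $\mathbf e^{\perp}$ and a monotone change of angular variable $\theta\mapsto\psi$, one finds $c\circ\psi=-B_{\rho,\delta}/\sqrt{2A_{\rho}}$ (so $\Phi(-c)/\phi(c)=\Phi(B_{\rho,\delta}/\sqrt{2A_{\rho}})/\phi(B_{\rho,\delta}/\sqrt{2A_{\rho}})$, since $\phi$ is even), Jacobian $d\psi/d\theta=\frac{\sqrt3}{2\rho\sqrt{\rho^{2}+2}\,A_{\rho}(\theta)}$, and new endpoints $\psi=11\pi/12$ and $\psi=19\pi/12$; the integral above then becomes exactly $-\int_{11\pi/12}^{19\pi/12}G_{\rho,\delta}(\theta)\,d\theta$. (Sanity check at $\rho=1$: $A_{1}\equiv\tfrac12$, the Jacobian is $\equiv1$, and $\int_{11\pi/12}^{19\pi/12}d\theta=\tfrac{2\pi}{3}=\arccos(-\tfrac12)=\alpha$.) Finally $\mathrm{Var}(Z_{\rho,\delta})=p-\Phi(-\delta/\sqrt{\rho^{2}+1})^{2}$ gives~(\ref{eq:varZrhodelta}).

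\textbf{Main obstacle.} The delicate part is the final reparametrization: pinning down the precise frame of $\mathbf e^{\perp}$ and the precise trigonometric substitution that convert the transparent geometric integral into the prescribed functions $A_{\rho},B_{\rho,\delta},G_{\rho,\delta}$ with endpoints $11\pi/12$ and $19\pi/12$ --- routine but fiddly bookkeeping, where the constants $\sqrt3$, $\sqrt6$ and the power $\tfrac32$ materialize. One must also be careful to select the correct one of the four regions into which the two bounding lines cut $\mathbf e^{\perp}$, and to verify the dihedral angle is $\arccos(-1/(\rho^{2}+1))$ and not its supplement.
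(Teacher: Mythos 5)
Your argument is sound and shares its skeleton with the paper's proof: both identify $\Pr\left(X_{1}\leq X_{0},X_{2}\leq X_{0}\right)$ with the trivariate Gaussian measure of a wedge whose spine is a line, exploit invariance along that spine to drop to two dimensions, and evaluate the planar integral in polar coordinates about the sector's apex, with the radial integral producing the $1-c\,\Phi\left(-c\right)/\phi\left(c\right)$ structure. The genuine difference is where the $\rho$-dependence is parked. You whiten first ($X_{i}=\delta+\rho Y_{i}$), so the planar Gaussian is standard and the sector's opening angle is $\arccos\left(-1/\left(\rho^{2}+1\right)\right)$; the middle term of (\ref{eq:varZrhodelta}) then falls out immediately as the angle over $2\pi$ times $e^{-\left|\mathbf{p}\right|^{2}/2}$. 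The paper instead rotates the unwhitened $\mathcal{N}_{3}$ so the spine $\alpha\left(1,1,1\right)^{\mathsf{T}}$ is vertical, leaving a \emph{correlated} planar Gaussian over a fixed sector of opening $2\pi/3$; the functions $A_{\rho}$ and $B_{\rho,\delta}$ of (\ref{eq:A})--(\ref{eq:B}) then appear directly in the exponent of $g\left(r\cos\theta,r\sin\theta\right)$, and the $\arccos$ term has to be extracted by explicitly evaluating $\int_{\nicefrac{11\pi}{12}}^{\nicefrac{19\pi}{12}}d\theta/\left(2A_{\rho}\left(\theta\right)\right)$ via an arctangent identity ((\ref{eq:varFinalIntegral})--(\ref{eq:varConcl})). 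Your route buys a cleaner, coordinate-free derivation of the middle term and of the sector angle; the paper's buys the target integrand $G_{\rho,\delta}$ with no further change of variables.

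The one step you assert rather than derive is the angular reparametrization with $c\circ\psi=-B_{\rho,\delta}/\sqrt{2A_{\rho}}$, Jacobian $\sqrt{3}/\left(2\rho\sqrt{\rho^{2}+2}\,A_{\rho}\left(\theta\right)\right)$, and endpoints $\nicefrac{11\pi}{12}$ and $\nicefrac{19\pi}{12}$ --- and that is precisely where the constants $\sqrt{3}$, $\sqrt{6}$ and the exponent $\nicefrac{3}{2}$ in (\ref{eq:G}) live, so the proof is not complete as written. The claim is nonetheless correct: your substitution is exactly the polar angle induced by the paper's rotated frame, and its total angular mass checks out, since the paper's own computation (\ref{eq:varConcl}) gives $\frac{\sqrt{3}}{\rho\sqrt{\rho^{2}+2}}\int_{\nicefrac{11\pi}{12}}^{\nicefrac{19\pi}{12}}\frac{d\theta}{2A_{\rho}\left(\theta\right)}=\cos^{-1}\left(\nicefrac{-1}{\left(\rho^{2}+1\right)}\right)$, which equals your $\alpha$. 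Either carry out that change of variables explicitly or adopt the paper's frame from the outset; otherwise the identification of your arc integral with $-\int_{\nicefrac{11\pi}{12}}^{\nicefrac{19\pi}{12}}G_{\rho,\delta}\left(\theta\right)d\theta$ rests on a consistency check rather than a derivation.
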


\begin{figure}
\hfill{}\includegraphics[scale=0.4]{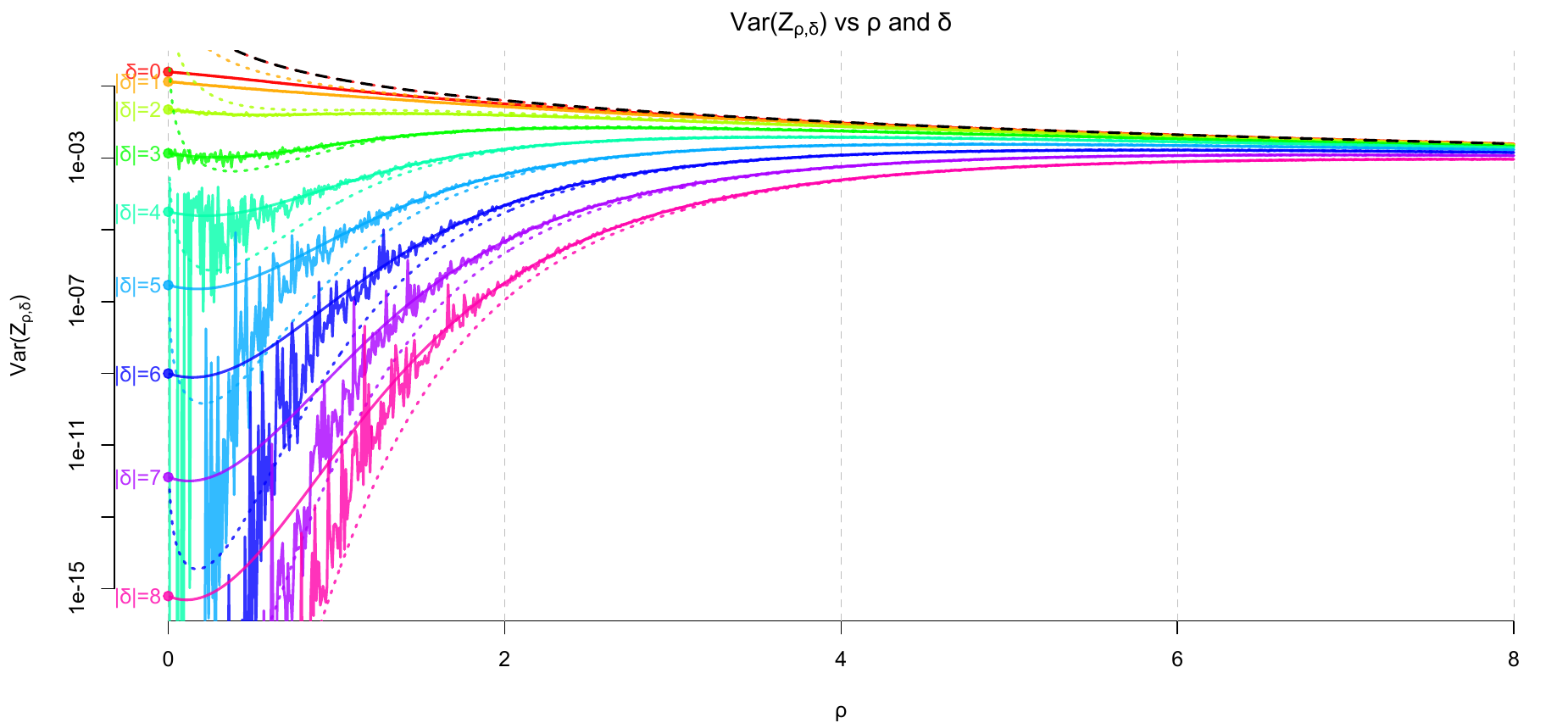}\hfill{}

\caption{Five approximations of $\mathrm{Var}\left(Z_{\rho,\delta}\right)$
are compared. Smooth, solid curves depict the numerical approximation
in (\ref{eq:approxVarIntegral}). Jagged lines show sample variances
computed from simulated $Z_{\rho,\delta}$ values. The dashed curve
corresponds to the approximation $\nicefrac{1}{2\pi\rho^{2}}$ (see
(\ref{eq:stanZtoN1})) while dotted curves are based on $\nicefrac{\phi\left(\nicefrac{\delta}{\sqrt{\rho^{2}+1}}\right)^{2}}{\rho^{2}}$
(see (\ref{eq:stanZtoN2})). Finally, solid circles above $\rho=0$
are based on $\Phi\left(-\delta\right)\Phi\left(\delta\right)$ (see
(\ref{eq:ZtoBern})).}

\label{fig:varZapprox}
\end{figure}

Since $G_{\rho,\delta}$ is symmetric in $\theta$ about $\nicefrac{5\pi}{4}$,
the integral in (\ref{eq:varZrhodelta}) can be rewritten as twice
the integral taken over either the interval $\left[\nicefrac{11\pi}{12},\nicefrac{5\pi}{4}\right]$
or $\left[\nicefrac{5\pi}{4},\nicefrac{19\pi}{12}\right]$. Moreover,
the integral vanishes when $\delta=0$, which reproduces a result
from an earlier version of this paper (\citet{L24}). The proof in
Appendix \ref{sec:Derivations-for-=0000A72.1} further establishes
that, for $k\geq1$, $\mathbb{E}Z_{\rho,\delta}^{k}=\Pr\left(X_{1}\leq X_{0},\,X_{2}\leq X_{0},\,\ldots,\,X_{k}\leq X_{0}\right)$. 

Figure \ref{fig:varZapprox} computes $\mathrm{Var}\left(Z_{\rho,\delta}\right)$
in five different ways: 
\begin{enumerate}
\item Smooth, solid curves: These approximate the true value of $\mathrm{Var}\left(Z_{\rho,\delta}\right)$
from (\ref{eq:varZrhodelta}) using the expression
\begin{equation}
\epsilon\sum_{k=0}^{\left\lfloor \nicefrac{2\pi}{3\epsilon}\right\rfloor }G_{\rho,\delta}\left(\frac{11\pi}{12}+k\epsilon\right)+\frac{\cos^{-1}\left(\nicefrac{-1}{\left(\rho^{2}+1\right)}\right)}{2\pi\exp\left(\nicefrac{\delta^{2}}{\left(\rho^{2}+2\right)}\right)}-\Phi\left(\nicefrac{-\delta}{\sqrt{\rho^{2}+1}}\right)^{2},\label{eq:approxVarIntegral}
\end{equation}
with $\epsilon\coloneqq10^{-4}$. The interval $\left[\nicefrac{11\pi}{12},\nicefrac{19\pi}{12}\right)$
is divided into roughly 20,000 equal-width bins. 
\item Jagged lines: These show the sample variances of sets $\left\{ \Phi\left(\nicefrac{\left(X_{0,j}-\delta\right)}{\rho}\right)\right\} _{j=1}^{m}$,
where $X_{0,j}$ are independent samples from $\mathcal{N}\left(0,1\right)$
and $m=10^{4}$. 
\item Dashed curve: This employs the approximation $\nicefrac{1}{2\pi\rho^{2}}$
from (\ref{eq:stanZtoN1}). 
\item Dotted curves: These use the approximation $\nicefrac{\phi\left(\nicefrac{\delta}{\sqrt{\rho^{2}+1}}\right)^{2}}{\rho^{2}}$
from (\ref{eq:stanZtoN2}). 
\item Solid circles above $\rho=0$: These use the approximation $\Phi\left(-\delta\right)\Phi\left(\delta\right)$
from (\ref{eq:ZtoBern}). 
\end{enumerate}
Overall, the simulated variances shown in Figure \ref{fig:varZapprox}
roughly match our approximations especially for $\rho$ large or $\left|\delta\right|$
small.

Finally, by combining Theorem \ref{thm:theGenMean} with $\mathbb{E}\left[Z_{\rho,\delta}\left(1-Z_{\rho,\delta}\right)\right]>0$
we obtain 
\begin{equation}
0<\mathrm{Var}\left(Z_{\rho,\delta}\right)<\Phi\left(\nicefrac{-\delta}{\sqrt{\rho^{2}+1}}\right)\Phi\left(\nicefrac{\delta}{\sqrt{\rho^{2}+1}}\right)\leq\nicefrac{1}{4},\label{eq:varBounds}
\end{equation}
which implies that $\lim_{\left|\delta\right|\rightarrow\infty}\mathrm{Var}\left(Z_{\rho,\delta}\right)=0$,
as expected. Furthermore, if $\left|\delta_{1}\right|<\left|\delta_{2}\right|$,
then $\mathrm{Var}\left(Z_{\rho,\delta_{1}}\right)>\mathrm{Var}\left(Z_{\rho,\delta_{2}}\right)$.
Since $\mathrm{Var}\left(Z_{\rho,\delta}\right)=\mathrm{Var}\left(Z_{\rho,-\delta}\right)$,
it is sufficient to consider the case $0\leq\delta_{1}<\delta_{2}$.
In this setting, since $\Phi\left(\nicefrac{-\left(\delta_{1}+\delta_{2}\right)}{2\rho}\right)<\nicefrac{1}{2}$,
the desired result follows if $\mathrm{sign}\left(f_{\rho,\delta_{1}}\left(z\right)-f_{\rho,\delta_{2}}\left(z\right)\right)=\mathrm{sign}\left(z-\Phi\left(\nicefrac{-\left(\delta_{1}+\delta_{2}\right)}{2\rho}\right)\right)$,
for all $0<z<1$.\footnote{Let $\mathrm{sign}\left(x\right)\coloneqq\nicefrac{x}{\left|x\right|}$
if $x\ne0$ and zero otherwise.} This relationship is confirmed by (\ref{eq:densityExplicit}). Thus,
$\mathrm{Var}\left(Z_{\rho,\delta}\right)$ monotonically approaches
zero as $\delta\rightarrow-\infty$ or as $\delta\rightarrow\infty$
(see Figure \ref{fig:varZapprox}).

\subsection{A Beta Approximation for $F_{\rho,\delta}$ \label{subsec:A-Beta-Approximation}}

In this section and the next, we argue that $\mathscr{L}\left(\Phi\left(\nicefrac{\left(X_{0}-\mu\right)}{\sigma}\right)\right)\approx\mathrm{Beta}\left(a_{\rho,\delta},b_{\rho,\delta}\right)$,
for specific parameters $a_{\rho,\delta}$ and $b_{\rho,\delta}$.
In this section we derive $a_{\rho,\delta}$ and $b_{\rho,\delta}$
and provide empirical evidence supporting our claim. In the next section,
we adopt a more theoretical approach, showing that when either $\rho$
or $\delta$ becomes extreme (\emph{i.e.}, very small or very large),
the distribution $\mathscr{L}\left(\Phi\left(\nicefrac{\left(X_{0}-\mu\right)}{\sigma}\right)\right)$
converges to $\mathrm{Beta}\left(a_{\rho,\delta},b_{\rho,\delta}\right)$.
In both sections, we quantify the difference between $F_{X}$ and
$F_{Y}$ using the 2-Wasserstein distance defined as
\begin{equation}
W_{2}\left(X,Y\right)\coloneqq\sqrt{\int_{0}^{1}\left\{ F_{X}^{-1}\left(z\right)-F_{Y}^{-1}\left(z\right)\right\} ^{2}dz}.\label{eq:Wasserstein}
\end{equation}
We approximate the integral in (\ref{eq:Wasserstein}) using the simple
binning method described in (\ref{eq:approxVarIntegral}).

Our approach relies on mapping the parameter space of $\mathscr{L}\left(\Phi\left(\nicefrac{\left(X_{0}-\mu\right)}{\sigma}\right)\right)$
(which is $\left(0,\infty\right)\times\mathbb{R}$) to that of $\mathrm{Beta}\left(\alpha,\beta\right)$
(which is $\left(0,\infty\right)^{2}$). We achieve this by introducing
functions $a,b:\left(0,\infty\right)\times\mathbb{R}\rightarrow\left(0,\infty\right)$.
Define $Z_{\rho,\delta}\coloneqq\Phi\left(\nicefrac{\left(X_{0}-\mu\right)}{\sigma}\right)$
and let $X_{\alpha,\beta}\sim\mathrm{Beta}\left(\alpha,\beta\right)$.
Although uncountably many mappings exist, we focus on the one that
matches the mean and variance of $X_{\alpha,\beta}$ with those of
$Z_{\rho,\delta}$. This choice is natural and, as we shall see, yields
useful results. Specifically, we require that $\mathbb{E}X_{a\left(\rho,\delta\right),b\left(\rho,\delta\right)}=\mathbb{E}Z_{\rho,\delta}$
and $\mathrm{Var}\left(X_{a\left(\rho,\delta\right),b\left(\rho,\delta\right)}\right)=\mathrm{Var}\left(Z_{\rho,\delta}\right)$.
Solving these equations yields the following positive parameters (see
(\ref{eq:varBounds})):
\begin{align}
a_{\rho,\delta} & \coloneqq\frac{\Phi\left(\nicefrac{-\delta}{\sqrt{\rho^{2}+1}}\right)}{\mathrm{Var}\left(Z_{\rho,\delta}\right)}\left[\Phi\left(\nicefrac{-\delta}{\sqrt{\rho^{2}+1}}\right)\Phi\left(\nicefrac{\delta}{\sqrt{\rho^{2}+1}}\right)-\mathrm{Var}\left(Z_{\rho,\delta}\right)\right]\label{eq:alpha3}\\
b_{\rho,\delta} & \coloneqq\frac{\Phi\left(\nicefrac{\delta}{\sqrt{\rho^{2}+1}}\right)}{\mathrm{Var}\left(Z_{\rho,\delta}\right)}\left[\Phi\left(\nicefrac{-\delta}{\sqrt{\rho^{2}+1}}\right)\Phi\left(\nicefrac{\delta}{\sqrt{\rho^{2}+1}}\right)-\mathrm{Var}\left(Z_{\rho,\delta}\right)\right].\label{eq:beta3}
\end{align}
The examples that follow use the binning approach described in (\ref{eq:approxVarIntegral})
to approximate the integral in $\mathrm{Var}\left(Z_{\rho,\delta}\right)$.

\begin{figure}[t]
\hfill{}\includegraphics[scale=0.5]{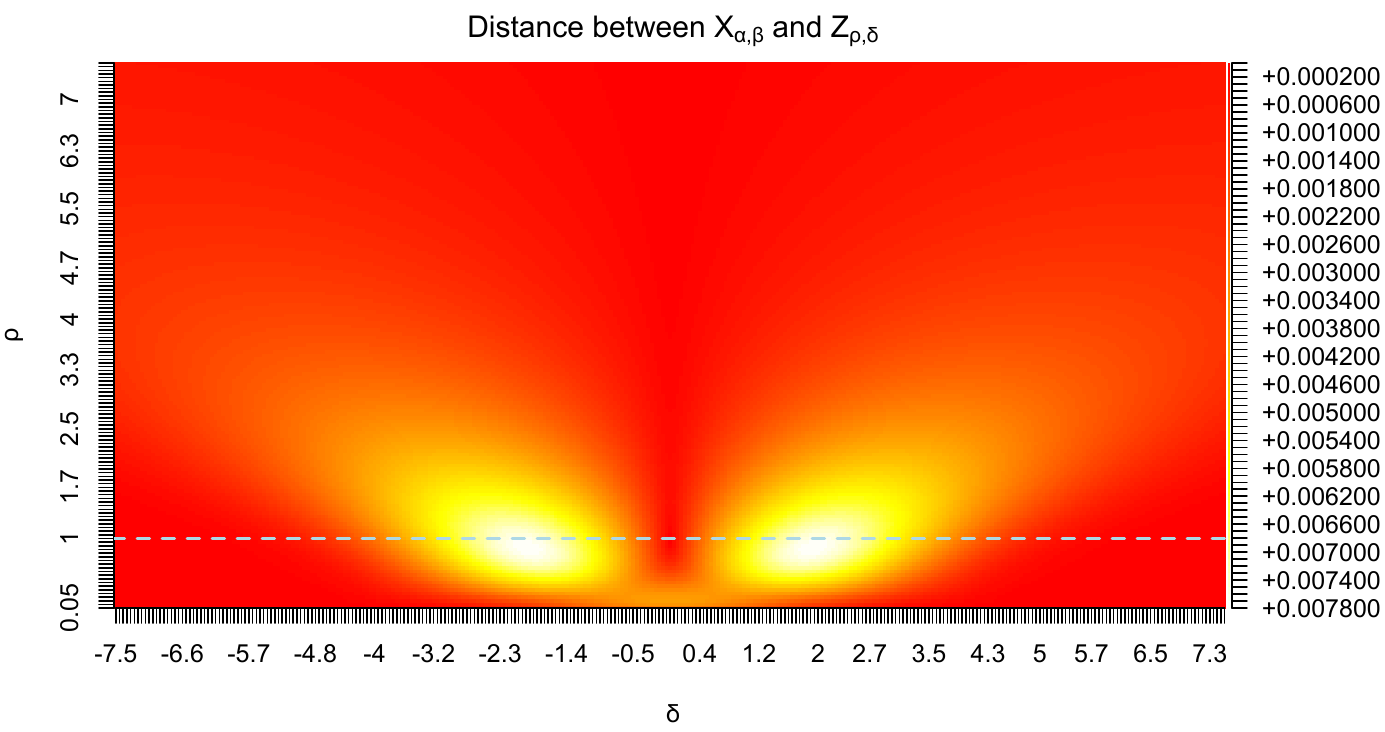}\hfill{}

\caption{2-Wasserstein distance $W_{2}\left(Z_{\rho,\delta},X_{a_{\rho,\delta},b_{\rho,\delta}}\right)$
for $\rho,\left|\delta\right|\protect\leq7.5$. The distance peaks
near $\left(\rho,\left|\delta\right|\right)=\left(0.85,1.90\right)$.
In this range, the distributions differ most when $\sigma\approx0.85\sigma_{0}$
and $\mu\approx\mu_{0}+1.90\sigma_{0}$. See Figure \ref{fig:egs}.}

\label{fig:distZX}
\end{figure}

\begin{figure}[b]
\hfill{}\includegraphics[scale=0.5]{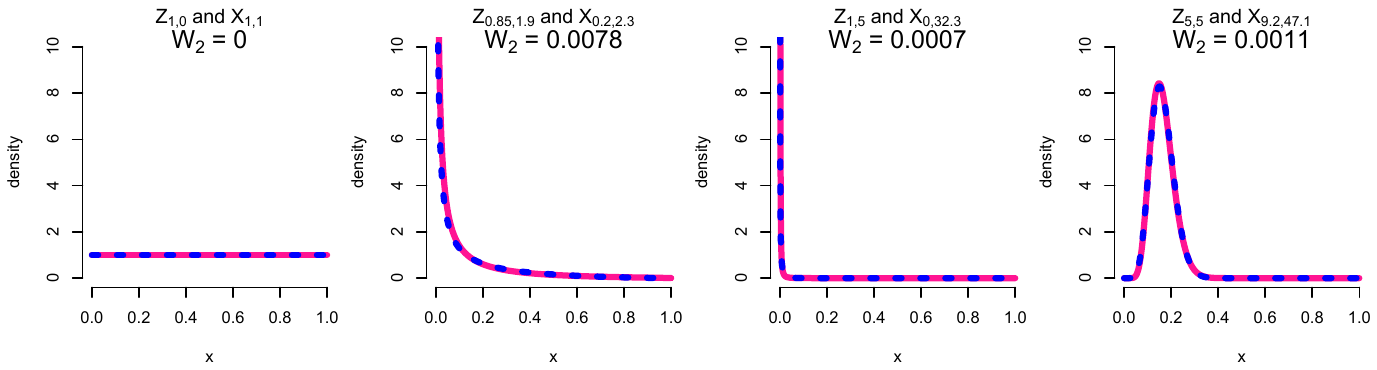}\hfill{}

\caption{Density functions for $\left(\rho,\delta\right)\in\left\{ \left(1,0\right),\left(0.85,1.90\right),\left(1,5\right),\left(5,5\right)\right\} $.
The second panel displays the maximally different case (see Figure
\ref{fig:distZX}), where the density of $X_{a_{\rho,\delta},b_{\rho,\delta}}$
(blue) exceeds that of $Z_{\rho,\delta}$ (pink) on $\left[0.16,0.70\right]$
and vice versa on $\left(0,0.16\right)\cup\left(0.70,1\right)$. For
$\rho,\left|\delta\right|\protect\leq7.5$, the distributions are
nearly identical.\vspace{-1em}}

\label{fig:egs}
\end{figure}

Figure \ref{fig:distZX} displays the values of $W_{2}\left(Z_{\rho,\delta},X_{a_{\rho,\delta},b_{\rho,\delta}}\right)$
for $\rho$ and $\left|\delta\right|$ up to 7.5. We observe that
this distance reaches its maximum near $\left(\rho,\left|\delta\right|\right)=\left(0.85,1.90\right)$.
In this range, the distributions $\mathscr{L}\left(\Phi\left(\nicefrac{\left(X_{0}-\mu\right)}{\sigma}\right)\right)$
and $\mathrm{Beta}\left(a_{\rho,\delta},b_{\rho,\delta}\right)$ differ
most when $\sigma\approx0.85\sigma_{0}$ and $\mu\approx\mu_{0}+1.90\sigma_{0}$.
Figure \ref{fig:egs} compares the density functions of $Z_{\rho,\delta}$
and $X_{a_{\rho,\delta},b_{\rho,\delta}}$ for several parameter pairs.
In the second panel---the maximally different case---the beta density
(in blue) exceeds the transformed normal density (in pink) on the
interval $\left[0.16,0.70\right]$ and vice versa on $\left(0,0.16\right)\cup\left(0.70,1\right)$.
Within the range $\rho,\left|\delta\right|\leq7.5$, the distributions
are nearly indistinguishable (see Figures \ref{fig:distZX} and \ref{fig:egs}),
coinciding exactly at $\left(\rho,\delta\right)=\left(1,0\right)$
where $\mathscr{L}\left(Z_{1,0}\right)=\mathscr{L}\left(X_{1,1}\right)=\mathrm{Uniform}\left(0,1\right)$.
Section \ref{subsec:Shared-Limiting-Distributions} further shows
that this near equivalence extends beyond $\rho,\left|\delta\right|\leq7.5$.

\subsection{Shared Limiting Distributions \label{subsec:Shared-Limiting-Distributions}}

Next, we consider cases where one or both of $\rho$ and $\delta$
become extremely large or small. In these regimes, we show that the
distributions $\mathscr{L}\left(\Phi\left(\nicefrac{\left(X_{0}-\mu\right)}{\sigma}\right)\right)$
and $\mathrm{Beta}\left(a_{\rho,\delta},b_{\rho,\delta}\right)$ grow
increasingly similar. We formalize this observation with two theorems---one
describing the limiting behavior of $Z_{\rho,\delta}\coloneqq\Phi\left(\nicefrac{\left(X_{0}-\mu\right)}{\sigma}\right)$
and the other describing the limiting behavior of $X_{a_{\rho,\delta},b_{\rho,\delta}}\sim\mathrm{Beta}\left(a_{\rho,\delta},b_{\rho,\delta}\right)$---along
with a corollary addressing the limiting 2-Wasserstein distances.
For clarity, we define the standardization function $\mathfrak{s}:\mathbb{R}\rightarrow\mathbb{R}$
by $\mathfrak{s}\left(x\right)\coloneqq\rho\left(x-\Phi\left(\nicefrac{-\delta}{\sqrt{\rho^{2}+1}}\right)\right)$,
and use $\longrightarrow$ and $\implies$ to denote convergence in
probability and convergence in distribution, respectively.

\begin{restatable}{theorem}{limitZ}

\label{thm:limitZ}With $Z_{\rho,\delta}$ and $\mathfrak{s}$ defined
as above, the following limits hold: 
\begin{align}
Z_{\rho,\delta} & \longrightarrow1\textrm{ as }\delta\rightarrow-\infty,\label{eq:Zto1}\\
Z_{\rho,\delta} & \longrightarrow\nicefrac{1}{2}\textrm{ as }\rho\rightarrow\infty,\label{eq:ZtoHalf}\\
Z_{\rho,\delta} & \longrightarrow\Phi\left(-r\right)\textrm{ as }\rho,\left|\delta\right|\rightarrow\infty,\,\nicefrac{\delta}{\rho}=r\textrm{ fixed},\label{eq:ZtoPhi}\\
Z_{\rho,\delta} & \longrightarrow0\textrm{ as }\delta\rightarrow\infty,\label{eq:Zto0}\\
Z_{\rho,\delta} & \implies\mathrm{Bernoulli}\left(\Phi\left(-\delta\right)\right)\textrm{ as }\rho\rightarrow0^{+},\label{eq:ZtoBern}\\
\mathfrak{s}\left(Z_{\rho,\delta}\right) & \implies\mathcal{N}\left(0,\nicefrac{1}{2\pi}\right)\textrm{ as }\rho\rightarrow\infty,\label{eq:stanZtoN1}\\
\mathfrak{s}\left(Z_{\rho,\delta}\right) & \implies\mathcal{N}\left(0,\phi\left(r\right)^{2}\right)\textrm{ as }\rho,\left|\delta\right|\rightarrow\infty,\,\nicefrac{\delta}{\rho}=r\textrm{ fixed.}\label{eq:stanZtoN2}
\end{align}

\end{restatable}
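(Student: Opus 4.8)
The plan is to collapse all seven limits into statements about one deterministic transformation of a single standard normal. Put $W\coloneqq(X_{0}-\mu_{0})/\sigma_{0}\sim\mathcal{N}(0,1)$; then (\ref{eq:endDistZ}) reads $\Pr(Z_{\rho,\delta}\le y)=\Pr\bigl(W\le\delta+\rho\Phi^{-1}(y)\bigr)$, i.e.\ $Z_{\rho,\delta}\stackrel{\mathrm{d}}{=}\Phi\bigl((W-\delta)/\rho\bigr)$. Since convergence in distribution---and, for limits to a constant, convergence in probability---depends only on marginal laws, I may realize every $Z_{\rho,\delta}$ on one probability space through the same $W$. Each claim then asks for the pointwise (in $\rho,\delta$) limit of a fixed function of $W$, after which the relevant convergence theorem applies. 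Two elementary properties of $\Phi$ do the work throughout: it is strictly increasing with $\Phi(\pm\infty)\in\{0,1\}$, and it is $\phi(0)=1/\sqrt{2\pi}$-Lipschitz.

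For the four ``$\longrightarrow$'' limits (\ref{eq:Zto1})--(\ref{eq:Zto0}), I fix $W$ and send the parameters to their limits as specified; then $(W-\delta)/\rho$ tends to $+\infty$, $0$, $-r$, and $-\infty$, respectively, so $\Phi\bigl((W-\delta)/\rho\bigr)$ tends to the constants $1$, $\tfrac12$, $\Phi(-r)$, and $0$. Almost-sure convergence to a constant is convergence in probability, which is what is claimed. (One may instead argue directly: $\Pr(Z_{\rho,\delta}<1-\epsilon)=\Phi\bigl(\delta+\rho\Phi^{-1}(1-\epsilon)\bigr)\to0$ as $\delta\to-\infty$, and symmetrically for (\ref{eq:Zto0}); for (\ref{eq:ZtoHalf}) and (\ref{eq:ZtoPhi}) the Lipschitz bound gives $\bigl|Z_{\rho,\delta}-\Phi(-r)\bigr|\le|W|/(\rho\sqrt{2\pi})$, with $r=0$ for (\ref{eq:ZtoHalf}).) Note also that (\ref{eq:Zto0}) follows from (\ref{eq:Zto1}) via the reflection $Z_{\rho,-\delta}\stackrel{\mathrm{d}}{=}1-Z_{\rho,\delta}$ recorded in Section \ref{subsec:Mean-and-Variance}. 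For the Bernoulli limit (\ref{eq:ZtoBern}): on the almost-sure event $\{W\ne\delta\}$, as $\rho\to0^{+}$ the ratio $(W-\delta)/\rho$ runs off to $+\infty$ when $W>\delta$ and to $-\infty$ when $W<\delta$, so $Z_{\rho,\delta}\to\mathbf{1}_{\{W>\delta\}}$ almost surely, and $\mathbf{1}_{\{W>\delta\}}\sim\mathrm{Bernoulli}(\Phi(-\delta))$; hence convergence in distribution. (Equivalently, $F_{\rho,\delta}(y)=\Phi(\delta+\rho\Phi^{-1}(y))\to\Phi(\delta)$ for $y\in(0,1)$, which is exactly the limiting two-atom CDF at its continuity points, the cases $y\le0$ and $y\ge1$ being trivial.)

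For the Gaussian limits (\ref{eq:stanZtoN1})--(\ref{eq:stanZtoN2}), I use Theorem \ref{thm:theGenMean} (so $\mathbb{E}Z_{\rho,\delta}=\Phi(-\delta/\sqrt{\rho^{2}+1})$) together with the mean value theorem to write
\begin{align*}
\mathfrak{s}(Z_{\rho,\delta})
&=\rho\bigl(Z_{\rho,\delta}-\mathbb{E}Z_{\rho,\delta}\bigr)
=\rho\left[\Phi\!\left(\frac{W-\delta}{\rho}\right)-\Phi\!\left(\frac{-\delta}{\sqrt{\rho^{2}+1}}\right)\right]\\
&=\rho\,\phi(\eta_{\rho})\left[\frac{W-\delta}{\rho}+\frac{\delta}{\sqrt{\rho^{2}+1}}\right],
\end{align*}
with $\eta_{\rho}$ between $(W-\delta)/\rho$ and $-\delta/\sqrt{\rho^{2}+1}$. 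In the regime of (\ref{eq:stanZtoN2}), the constraint $\delta/\rho=r$ forces both endpoints to $-r$ (since $\delta/\sqrt{\rho^{2}+1}=r/\sqrt{1+\rho^{-2}}\to r$), so $\phi(\eta_{\rho})\to\phi(r)$; and
\[
\rho\left[\frac{W-\delta}{\rho}+\frac{\delta}{\sqrt{\rho^{2}+1}}\right]=W+\delta\left(\frac{1}{\sqrt{1+\rho^{-2}}}-1\right)=W-\frac{r}{2\rho}+O(\rho^{-3})\longrightarrow W,
\]
whence $\mathfrak{s}(Z_{\rho,\delta})\to\phi(r)\,W$ almost surely and therefore $\mathfrak{s}(Z_{\rho,\delta})\implies\mathcal{N}(0,\phi(r)^{2})$. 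Limit (\ref{eq:stanZtoN1}) is the boundary case with $\delta$ held fixed (so $\delta/\rho\to0$, i.e.\ $r=0$ and $\phi(0)^{2}=1/(2\pi)$), handled by the identical computation. The only real friction I anticipate is exactly here: one must not replace $\sqrt{\rho^{2}+1}$ in $\mathbb{E}Z_{\rho,\delta}$ by $\rho$, since the discrepancy contributes $\delta\bigl(1/\sqrt{1+\rho^{-2}}-1\bigr)=\Theta(\delta/\rho^{2})$, which vanishes precisely because $\delta/\rho$ stays bounded in these regimes; and one must check that the mean value theorem remainder carried by $\eta_{\rho}$ is controlled, which is immediate since $\phi$ is bounded and $\eta_{\rho}$ is squeezed between two convergent quantities. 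The Bernoulli limit needs only the bookkeeping of locating the continuity points of a two-atom law, and every remaining limit falls straight out of the monotonicity and Lipschitz continuity of $\Phi$.
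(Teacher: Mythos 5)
Your proof is correct, but it follows a genuinely different route from the paper's. You realize every $Z_{\rho,\delta}$ on a single probability space as the exact functional $\Phi\left(\nicefrac{\left(W-\delta\right)}{\rho}\right)$ of one standard normal $W$, and then read off all seven limits as almost-sure limits of that deterministic transformation, invoking the mean value theorem for $\Phi$ (with the random intermediate point squeezed between two quantities converging to the same limit) for the two Gaussian cases. The paper instead works entirely at the level of distribution functions and moments: it proves (\ref{eq:Zto1}), (\ref{eq:Zto0}), (\ref{eq:ZtoHalf}), and (\ref{eq:ZtoPhi}) by Chebyshev's inequality combined with the variance bound (\ref{eq:varBounds}) and with (\ref{eq:stanZtoN1})--(\ref{eq:stanZtoN2}); it proves (\ref{eq:ZtoBern}) by evaluating $F_{\rho,\delta}$ at the continuity points of the two-atom limit (essentially your parenthetical alternative); and it obtains (\ref{eq:stanZtoN1})--(\ref{eq:stanZtoN2}) by a Lagrange-remainder expansion of $\Phi^{-1}$ about $\Phi\left(\nicefrac{-\delta}{\sqrt{\rho^{2}+1}}\right)$ applied inside the explicit CDF of $\mathfrak{s}\left(Z_{\rho,\delta}\right)$, which is the mirror image of your expansion of $\Phi$ itself. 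What your coupling buys is uniformity and economy: you never need the variance machinery of Theorem \ref{thm:theGenVar} or the bound (\ref{eq:varBounds}), and the Chebyshev step for (\ref{eq:ZtoHalf}) and (\ref{eq:ZtoPhi})---which in the paper implicitly requires knowing that $\mathrm{Var}\left(Z_{\rho,\delta}\right)$ vanishes in those regimes---is replaced by a pointwise squeeze. What the paper's route buys is that it exercises and cross-validates the moment formulas that the rest of the paper depends on, and it yields the variance asymptotics $\mathrm{Var}\left(Z_{\rho,\delta}\right)\sim\nicefrac{1}{2\pi\rho^{2}}$ as a by-product, which the proof of Theorem \ref{thm:limitX} later reuses. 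You were also right to flag the one genuine pitfall: the discrepancy between centering at $\Phi\left(\nicefrac{-\delta}{\sqrt{\rho^{2}+1}}\right)$ versus $\Phi\left(\nicefrac{-\delta}{\rho}\right)$ contributes a term of order $\nicefrac{\delta}{\rho^{2}}$ after multiplication by $\rho$ it is $\nicefrac{r}{2\rho}$, which vanishes exactly because $\nicefrac{\delta}{\rho}$ stays bounded; the paper absorbs the same term as $\delta\left(1-\nicefrac{\rho}{\sqrt{\rho^{2}+1}}\right)$ in (\ref{eq:useTaylor}) and (\ref{eq:useTaylorRatio}).
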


Note too that $\mathfrak{s}\left(Z_{\rho,\delta}\right)\longrightarrow0$
as $\rho\rightarrow0^{+}$. See Appendix \ref{sec:Derivations-for-=0000A72.3}
for the proof of Theorem \ref{thm:limitZ}. For additional insight
into Theorem \ref{thm:limitZ}, note that $Z_{\rho,\delta}$ serves
as a binomial prior for the (shifted) rank of $X_{0}\sim\mathcal{N}\left(0,1\right)$
among independent $X_{1},X_{2},\ldots,X_{n}\sim\mathcal{N}\left(\delta,\rho^{2}\right)$
(see (\ref{eq:prior})). Theorem \ref{thm:RankAsymp} directly addresses
these ranks. For now, our goal is simply to show that $Z_{\rho,\delta}$
converges to a beta distribution as its parameters become large or
small. We now turn to the corresponding limiting distributions of
$X_{a_{\rho,\delta},b_{\rho,\delta}}$.

\begin{restatable}{theorem}{limitX}

\label{thm:limitX}With $X_{a_{\rho,\delta},b_{\rho,\delta}}$ and
$\mathfrak{s}$ defined as above, the following limits hold:
\begin{align}
X_{a_{\rho,\delta},b_{\rho,\delta}} & \longrightarrow1\textrm{ as }\delta\rightarrow-\infty,\tag{\ref{eq:Zto1}\ensuremath{'}}\label{eq:Xto1}\\
X_{a_{\rho,\delta},b_{\rho,\delta}} & \longrightarrow\nicefrac{1}{2}\textrm{ as }\rho\rightarrow\infty,\tag{\ref{eq:ZtoHalf}\ensuremath{'}}\label{eq:XtoHalf}\\
X_{a_{\rho,\delta},b_{\rho,\delta}} & \longrightarrow\Phi\left(-r\right)\textrm{ as }\rho,\left|\delta\right|\rightarrow\infty,\,\nicefrac{\delta}{\rho}=r\textrm{ fixed},\tag{\ref{eq:ZtoPhi}\ensuremath{'}}\label{eq:XtoPhi}\\
X_{a_{\rho,\delta},b_{\rho,\delta}} & \longrightarrow0\textrm{ as }\delta\rightarrow\infty,\tag{\ref{eq:Zto0}\ensuremath{'}}\label{eq:Xto0}\\
X_{a_{\rho,\delta},b_{\rho,\delta}} & \implies\mathrm{Bernoulli}\left(\Phi\left(-\delta\right)\right)\textrm{ as }\rho\rightarrow0^{+},\tag{\ref{eq:ZtoBern}\ensuremath{'}}\label{eq:XtoBern}\\
\mathfrak{s}\left(X_{a_{\rho,\delta},b_{\rho,\delta}}\right) & \implies\mathcal{N}\left(0,\nicefrac{1}{2\pi}\right)\textrm{ as }\rho\rightarrow\infty,\tag{\ref{eq:stanZtoN1}\ensuremath{'}}\label{eq:stanXtoN1}\\
\mathfrak{s}\left(X_{a_{\rho,\delta},b_{\rho,\delta}}\right) & \implies\mathcal{N}\left(0,\phi\left(r\right)^{2}\right)\textrm{ as }\rho,\left|\delta\right|\rightarrow\infty,\,\nicefrac{\delta}{\rho}=r\textrm{ fixed.}\tag{\ref{eq:stanZtoN2}\ensuremath{'}}\label{eq:stanXtoN2}
\end{align}

\end{restatable}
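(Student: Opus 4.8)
The plan is to verify each limit by feeding the closed-form expressions for $a_{\rho,\delta}$ and $b_{\rho,\delta}$ from \eqref{eq:alpha3}--\eqref{eq:beta3} into the standard asymptotics of the $\mathrm{Beta}$ family. The key input is the behaviour of the two ingredients $m \coloneqq \Phi(-\delta/\sqrt{\rho^2+1}) = \mathbb{E}Z_{\rho,\delta}$ and $v \coloneqq \mathrm{Var}(Z_{\rho,\delta})$ under each limiting regime; once these are pinned down, we have $a_{\rho,\delta} = m\bigl(m(1-m)-v\bigr)/v$, $b_{\rho,\delta} = (1-m)\bigl(m(1-m)-v\bigr)/v$, and hence $a+b = \bigl(m(1-m)-v\bigr)/v = m(1-m)/v - 1$ and $a/(a+b) = m$. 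So the mean of the beta always equals $m$ exactly, and the relevant dichotomy is whether $a+b \to \infty$ (concentration at $m$) or $a+b$ stays bounded / one parameter goes to $0$ (a nondegenerate or Bernoulli-type limit). I would organize the proof around computing $\lim m$ and $\lim v$ (the latter via the bounds and monotonicity in \eqref{eq:varBounds}, or via the explicit approximations recorded in Theorem~\ref{thm:limitZ}), and then reading off $a$, $b$, $a+b$.

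First I would dispatch \eqref{eq:Xto1}, \eqref{eq:XtoHalf}, \eqref{eq:XtoPhi}, \eqref{eq:Xto0}: in each of these $m \to c$ for a constant $c \in \{1,\tfrac12,\Phi(-r),0\}$, and $v \to 0$ by \eqref{eq:varBounds} (directly for $|\delta|\to\infty$; for $\rho\to\infty$ the bound $v < \tfrac{1}{2\pi\rho^2}$ from the dashed-curve estimate \eqref{eq:stanZtoN1}, or simply $v<m(1-m)\le\tfrac14$ together with the concentration of $\mathfrak{s}(Z_{\rho,\delta})$). The only subtlety is checking $m(1-m)/v \to \infty$, i.e.\ that $v$ goes to $0$ strictly faster than $m(1-m)$; when $c\in\{0,1\}$ both numerator and denominator vanish, so here I would use the sharper fact that $v \sim \phi(\delta/\sqrt{\rho^2+1})^2/\rho^2$ (from \eqref{eq:stanZtoN2}) or $v/(m(1-m)) \to 0$, which follows because a $\mathrm{Beta}$-matched moment pair with $m\to 0$ forces $v = o(m)$ once $\mathfrak{s}(Z)$ is asymptotically Gaussian. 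Given $a+b\to\infty$ and $a/(a+b)=m\to c$, $\mathrm{Beta}(a,b)$ concentrates at $c$, giving convergence in probability; for \eqref{eq:XtoPhi} the mean is $m\to\Phi(-r)$ and the variance $\to 0$, identically.

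Next, \eqref{eq:XtoBern}: as $\rho\to 0^+$, $m = \Phi(-\delta/\sqrt{\rho^2+1}) \to \Phi(-\delta)$, while by \eqref{eq:ZtoBern} $Z_{\rho,\delta}\Rightarrow\mathrm{Bernoulli}(\Phi(-\delta))$, so $v \to \Phi(-\delta)\Phi(\delta) = m(1-m)$. Then $a+b = m(1-m)/v - 1 \to 0$, and more precisely $a\to 0$, $b\to 0$ with $a/(a+b) \to m$; a $\mathrm{Beta}(a,b)$ with $a,b\to 0$ and $a/(a+b)\to p$ converges in distribution to $\mathrm{Bernoulli}(p)$ (mass escapes to the endpoints $0$ and $1$ in proportions $1-p$ and $p$). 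I would prove this endpoint-escape either by a direct CDF computation ($\Pr(X_{a,b}\le y) \to 1-p$ for each fixed $y\in(0,1)$, using the incomplete-beta representation and $I_y(a,b)\to b/(a+b)$ as $a,b\to0$) or by citing a standard fact about the $\mathrm{Beta}$ family; this matches \eqref{eq:XtoBern}.

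Finally, the two CLT-type statements \eqref{eq:stanXtoN1} and \eqref{eq:stanXtoN2}: here $\mathfrak{s}(x) = \rho(x-m)$, so I need the fluctuations of $X_{a,b}$ about its mean $m$, scaled by $\rho$, to be asymptotically $\mathcal{N}(0,\tfrac{1}{2\pi})$ resp.\ $\mathcal{N}(0,\phi(r)^2)$. Since $a+b\to\infty$ with $a/(a+b)=m$ bounded away from $0$ and $1$ (for \eqref{eq:stanXtoN1}, $m=\tfrac12$; for \eqref{eq:stanXtoN2}, $m\to\Phi(-r)\in(0,1)$), the standard Beta CLT gives $(X_{a,b}-m)/\sqrt{v} \Rightarrow \mathcal{N}(0,1)$ because $v = \mathrm{Var}(X_{a,b}) = m(1-m)/(a+b+1) \to 0$. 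Multiplying by $\rho$: $\mathfrak{s}(X_{a,b}) = \rho(X_{a,b}-m) = \rho\sqrt{v}\cdot (X_{a,b}-m)/\sqrt{v}$, and by construction $v = \mathrm{Var}(Z_{\rho,\delta})$, so $\rho^2 v \to \tfrac{1}{2\pi}$ (as $\rho\to\infty$, from \eqref{eq:stanZtoN1}) resp.\ $\rho^2 v \to \phi(r)^2$ (from \eqref{eq:stanZtoN2}); Slutsky then yields the claimed normal limits. The main obstacle I anticipate is justifying the Beta CLT uniformly as the parameters drift — i.e.\ confirming that $a+b\to\infty$ genuinely holds in these regimes (equivalently $v/(m(1-m))\to 0$, which is exactly the content of $\mathfrak{s}(Z)$ being asymptotically nondegenerate Gaussian after scaling) and that no degeneracy of $m$ spoils the classical statement; everything else is bookkeeping with the explicit formulas and Slutsky's theorem.
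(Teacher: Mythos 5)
Your overall architecture matches the paper's: determine the limits of $m\coloneqq\mathbb{E}Z_{\rho,\delta}$ and $v\coloneqq\mathrm{Var}\left(Z_{\rho,\delta}\right)$, translate these into the behaviour of $a_{\rho,\delta}$ and $b_{\rho,\delta}$, and invoke standard Beta limit theorems (degenerate, Bernoulli endpoint-escape, CLT). The paper does exactly this through Propositions \ref{prop:limitab} and \ref{prop:Xto}; your treatments of (\ref{eq:XtoBern}), (\ref{eq:stanXtoN1}), and (\ref{eq:stanXtoN2}) are essentially the paper's arguments (the paper proves the endpoint-escape via incomplete-beta asymptotics and the Beta CLT via a Gamma-ratio representation, where you would cite them as standard). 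The genuine divergence is in the $\delta\rightarrow\pm\infty$ limits. The paper routes these through $a_{\rho,\delta}\rightarrow0$ and $b_{\rho,\delta}\rightarrow\infty$, whose proof (bounding the integral of $G_{\rho,\delta}$, a dominated-convergence argument, Mills-ratio inequalities, L'H\^{o}pital) is by far the hardest part of Appendix B. Your moment-matching observation legitimately bypasses all of that for the convergence-in-probability claims: since $\mathbb{E}X_{a_{\rho,\delta},b_{\rho,\delta}}=m\rightarrow0$ and $\mathrm{Var}\left(X_{a_{\rho,\delta},b_{\rho,\delta}}\right)=v<m\left(1-m\right)\rightarrow0$ by (\ref{eq:varBounds}), Chebyshev gives (\ref{eq:Xto0}) directly, with no information about $a_{\rho,\delta}+b_{\rho,\delta}$ required. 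That is a real simplification over the paper.

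The one step you must delete rather than repair is your ``only subtlety,'' the verification that $m\left(1-m\right)/v\rightarrow\infty$ as $\delta\rightarrow\infty$ with $\rho$ fixed. You justify it via $v\sim\phi\left(\nicefrac{\delta}{\sqrt{\rho^{2}+1}}\right)^{2}/\rho^{2}$ ``from (\ref{eq:stanZtoN2})'' and via asymptotic Gaussianity of $\mathfrak{s}\left(Z_{\rho,\delta}\right)$; neither applies in this regime, because (\ref{eq:stanZtoN1})--(\ref{eq:stanZtoN2}) hold only when $\rho\rightarrow\infty$, and Theorem \ref{thm:limitZ} asserts no Gaussian limit for fixed $\rho$ and $\left|\delta\right|\rightarrow\infty$. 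The claim itself is true---it is precisely the content of Proposition \ref{prop:limitab}, which the paper establishes with considerable effort---but your justification is inapplicable as written. Fortunately the claim is also unnecessary for the theorem: Chebyshev with the matched first two moments already yields (\ref{eq:Xto0}) and (\ref{eq:Xto1}), as above. One remaining soft spot, shared with the paper's own proof: extracting $\rho^{2}v\rightarrow\nicefrac{1}{2\pi}$ (resp.\ $\phi\left(r\right)^{2}$) from the distributional limits (\ref{eq:stanZtoN1})--(\ref{eq:stanZtoN2}), as both you and the paper do for (\ref{eq:XtoHalf}), (\ref{eq:stanXtoN1}), and (\ref{eq:stanXtoN2}), requires a uniform-integrability argument that neither proof spells out.
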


Note that, as above, $\mathfrak{s}\left(X_{a_{\rho,\delta},b_{\rho,\delta}}\right)\longrightarrow0$
as $\rho\rightarrow0^{+}$. Appendix \ref{sec:Derivations-for-=0000A72.3}
proves Theorem \ref{thm:limitX} by dividing the proof into two parts.
The first---and more challenging---part demonstrates that

\begin{equation}
\lim_{\delta\rightarrow\infty}a_{\rho,\delta}=\lim_{\delta\rightarrow-\infty}b_{\rho,\delta}=0\quad\textrm{ and }\quad\lim_{\delta\rightarrow-\infty}a_{\rho,\delta}=\lim_{\delta\rightarrow\infty}b_{\rho,\delta}=\infty,
\end{equation}
so that $\Phi\left(\nicefrac{-\delta}{\sqrt{\rho^{2}+1}}\right)^{2}\lesssim\mathrm{Var}\left(Z_{\rho,\delta}\right)\lesssim\Phi\left(\nicefrac{-\delta}{\sqrt{\rho^{2}+1}}\right)$,
as $\delta\rightarrow\infty$\emph{ }(\emph{cf.}\ (\ref{eq:varBounds})).
The second, much simpler, part shows that a beta random variable under
these conditions exhibits the stated limiting behaviors.

The following corollary reinforces our main point by summarizing the
results of Theorems \ref{thm:limitZ} and \ref{thm:limitX}.

\begin{restatable}{mycorollary}{WasZero}

\label{cor:Was0}Under the settings above, the following convergence
results hold:
\begin{align}
W_{2}\left(Z_{\rho,\delta},X_{a_{\rho,\delta},b_{\rho,\delta}}\right) & \longrightarrow0\textrm{ as }\delta\rightarrow-\infty,\tag{\ref{eq:Zto1}\ensuremath{''}}\label{eq:ZXto1}\\
W_{2}\left(Z_{\rho,\delta},X_{a_{\rho,\delta},b_{\rho,\delta}}\right) & \longrightarrow0\textrm{ as }\rho\rightarrow\infty,\tag{\ref{eq:ZtoHalf}\ensuremath{''}}\label{eq:ZXtoHalf}\\
W_{2}\left(Z_{\rho,\delta},X_{a_{\rho,\delta},b_{\rho,\delta}}\right) & \longrightarrow0\textrm{ as }\rho,\left|\delta\right|\rightarrow\infty,\,\nicefrac{\delta}{\rho}=r\textrm{ fixed},\tag{\ref{eq:ZtoPhi}\ensuremath{''}}\label{eq:ZXtoPhi}\\
W_{2}\left(Z_{\rho,\delta},X_{a_{\rho,\delta},b_{\rho,\delta}}\right) & \longrightarrow0\textrm{ as }\delta\rightarrow\infty,\tag{\ref{eq:Zto0}\ensuremath{''}}\label{eq:ZXto0}\\
W_{2}\left(Z_{\rho,\delta},X_{a_{\rho,\delta},b_{\rho,\delta}}\right) & \longrightarrow0\textrm{ as }\rho\rightarrow0^{+},\tag{\ref{eq:ZtoBern}\ensuremath{''}}\label{eq:ZXtoBern}\\
W_{2}\left(\mathfrak{s}\left(Z_{\rho,\delta}\right),\mathfrak{s}\left(X_{a_{\rho,\delta},b_{\rho,\delta}}\right)\right) & \longrightarrow0\textrm{ as }\rho\rightarrow\infty,\tag{\ref{eq:stanZtoN1}\ensuremath{''}}\label{eq:stanZXtoN1}\\
W_{2}\left(\mathfrak{s}\left(Z_{\rho,\delta}\right),\mathfrak{s}\left(X_{a_{\rho,\delta},b_{\rho,\delta}}\right)\right) & \longrightarrow0\textrm{ as }\rho,\left|\delta\right|\rightarrow\infty,\,\nicefrac{\delta}{\rho}=r\textrm{ fixed.}\tag{\ref{eq:stanZtoN2}\ensuremath{''}}\label{eq:stanZXtoN2}
\end{align}

\end{restatable}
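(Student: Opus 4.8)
The plan is to obtain each line of Corollary~\ref{cor:Was0} directly from the matching lines of Theorems~\ref{thm:limitZ} and~\ref{thm:limitX}, using the fact that the $2$-Wasserstein distance metrizes convergence in distribution (plus convergence of second moments) and that it satisfies the triangle inequality. In every one of the seven regimes we have identified a common limit law: in five cases (\eqref{eq:Zto1}--\eqref{eq:ZtoBern} and their primed counterparts) both $Z_{\rho,\delta}$ and $X_{a_{\rho,\delta},b_{\rho,\delta}}$ converge to the same limit --- a point mass at $1$, at $\nicefrac12$, at $\Phi(-r)$, at $0$, or a $\mathrm{Bernoulli}\bigl(\Phi(-\delta)\bigr)$ law --- and in the two remaining cases the standardized variables $\mathfrak{s}(Z_{\rho,\delta})$ and $\mathfrak{s}(X_{a_{\rho,\delta},b_{\rho,\delta}})$ both converge to the same Gaussian, $\mathcal{N}\bigl(0,\nicefrac{1}{2\pi}\bigr)$ or $\mathcal{N}\bigl(0,\phi(r)^2\bigr)$. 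So the skeleton of the argument is: (i) $W_2(Z,X)\le W_2(Z,L)+W_2(L,X)$ where $L$ denotes (a variable with) the common limit law; (ii) show each of the two terms on the right vanishes.

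The key technical point is the criterion for $W_2$-convergence. For random variables supported on $[0,1]$ (which is the case for $Z_{\rho,\delta}$ and $X_{a_{\rho,\delta},b_{\rho,\delta}}$ themselves), the quantile functions appearing in \eqref{eq:Wasserstein} are uniformly bounded, so by dominated convergence $W_2(V_k,V)\to0$ as soon as $F_{V_k}^{-1}\to F_{V}^{-1}$ pointwise a.e., which holds whenever $V_k\implies V$ (and a fortiori whenever $V_k\longrightarrow V$ for a constant $V$). This handles lines \eqref{eq:ZXto1}--\eqref{eq:ZXtoBern} immediately: each of Theorems~\ref{thm:limitZ} and~\ref{thm:limitX} supplies $Z_{\rho,\delta}\implies L$ and $X_{a_{\rho,\delta},b_{\rho,\delta}}\implies L$ in the relevant regime (convergence in probability to a constant being a special case), so both Wasserstein terms tend to $0$. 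For the two Gaussian lines \eqref{eq:stanZXtoN1}--\eqref{eq:stanZXtoN2}, the standardized variables are no longer bounded, so I would instead invoke the standard equivalence ``$W_2(V_k,V)\to0$ iff $V_k\implies V$ and $\mathbb{E}V_k^2\to\mathbb{E}V^2$'' (see, e.g., Villani). Convergence in distribution is given by \eqref{eq:stanZtoN1}--\eqref{eq:stanZtoN2} and their primed versions; for the second moments, note that $\mathbb{E}\mathfrak{s}(Z_{\rho,\delta})^2=\rho^2\mathrm{Var}(Z_{\rho,\delta})$ and $\mathbb{E}\mathfrak{s}(X_{a_{\rho,\delta},b_{\rho,\delta}})^2=\rho^2\mathrm{Var}(X_{a_{\rho,\delta},b_{\rho,\delta}})=\rho^2\mathrm{Var}(Z_{\rho,\delta})$ by the moment-matching definition of $a_{\rho,\delta},b_{\rho,\delta}$, so the two second moments are in fact \emph{equal} for every $\rho,\delta$; it then suffices to check that $\rho^2\mathrm{Var}(Z_{\rho,\delta})\to\nicefrac{1}{2\pi}$ (resp.\ $\to\phi(r)^2$) in the two regimes, which is exactly the content of the variances identified alongside \eqref{eq:stanZtoN1}--\eqref{eq:stanZtoN2} in the discussion of Figure~\ref{fig:varZapprox} and established in Appendix~\ref{sec:Derivations-for-=0000A72.1}. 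Alternatively, since $\mathfrak{s}$ is affine with slope $\rho$, one has $W_2(\mathfrak{s}(Z_{\rho,\delta}),\mathfrak{s}(X_{a_{\rho,\delta},b_{\rho,\delta}}))=\rho\,W_2(Z_{\rho,\delta},X_{a_{\rho,\delta},b_{\rho,\delta}})$, which reduces those two lines to a quantitative (rate) version of line~\eqref{eq:ZXtoHalf}, but the moment-based route is cleaner and avoids needing a rate.

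The main obstacle is precisely the unbounded-support issue in the last two lines: unlike the constant-limit cases, one cannot simply quote ``convergence in distribution $\Rightarrow$ $W_2$-convergence,'' because $W_2$ is not continuous with respect to weak convergence without a uniform-integrability / second-moment control. I would therefore be careful to state the equivalence theorem explicitly and to verify the second-moment condition, which --- thanks to the exact equality $\mathbb{E}\mathfrak{s}(X_{a_{\rho,\delta},b_{\rho,\delta}})^2=\mathbb{E}\mathfrak{s}(Z_{\rho,\delta})^2$ coming from moment matching --- reduces to the single limit $\lim\rho^2\mathrm{Var}(Z_{\rho,\delta})\in\{\nicefrac{1}{2\pi},\phi(r)^2\}$ already in hand. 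A minor secondary point: in the regime $\rho\to\infty$ with $\delta$ unconstrained, and in the regime $\rho,|\delta|\to\infty$ with $\delta/\rho=r$ fixed, one should make sure the convergences in Theorems~\ref{thm:limitZ}--\ref{thm:limitX} are uniform enough (or simply argued along arbitrary sequences of parameters in the prescribed region) that the triangle-inequality bound is valid as a genuine limit; this is routine since all the stated limits in those theorems are already phrased as limits along the parameter trajectories in question. Assembling these pieces line by line gives the corollary.
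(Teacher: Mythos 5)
Your proposal is correct and follows essentially the same route as the paper: the paper's proof is a one-sentence appeal to the fact (citing Panaretos \& Zemel) that $Z_{\rho,\delta}$ and $X_{a_{\rho,\delta},b_{\rho,\delta}}$ share the same limiting distributions and second moments in every regime, hence $W_{2}\rightarrow0$. You merely make explicit the details the paper delegates to that citation --- the triangle inequality through the common limit, dominated convergence of quantile functions for the bounded cases, and the observation that moment matching makes $\mathbb{E}\,\mathfrak{s}\left(X_{a_{\rho,\delta},b_{\rho,\delta}}\right)^{2}=\rho^{2}\mathrm{Var}\left(Z_{\rho,\delta}\right)=\mathbb{E}\,\mathfrak{s}\left(Z_{\rho,\delta}\right)^{2}$ exactly, so the second-moment condition reduces to the single limit $\rho^{2}\mathrm{Var}\left(Z_{\rho,\delta}\right)\rightarrow\nicefrac{1}{2\pi}$ (resp.\ $\phi\left(r\right)^{2}$) already used in the proof of Theorem \ref{thm:limitX}.
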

\begin{proof}
By comparing Theorems \ref{thm:limitZ} and \ref{thm:limitX}, we
observe that $Z_{\rho,\delta}$ and $X_{a_{\rho,\delta},b_{\rho,\delta}}$
share the same limiting distributions and second moments in all the
specified settings. Consequently, the 2-Wasserstein distances between
them converge to zero (\citet{PZ19}).
\end{proof}
In summary, when $\rho,\left|\delta\right|\leq7.5$, $\mathscr{L}\left(\Phi\left(\nicefrac{\left(X_{0}-\mu\right)}{\sigma}\right)\right)$
and $\mathrm{Beta}\left(a_{\rho,\delta},b_{\rho,\delta}\right)$ differ
most when $\left(\rho,\left|\delta\right|\right)\approx\left(0.85,1.90\right)$\emph{
}(Figure \ref{fig:distZX}). However, even in this \textquotedblleft worst-case\textquotedblright{}
scenario the difference is relatively small (Figure \ref{fig:egs}).
Moreover, the results presented here confirm that, in every specified
setting, the 2-Wasserstein distance between these distributions converges
to zero (Corollary \ref{cor:Was0}). Although we do not provide explicit
rates of convergence, these findings reinforce the robustness of the
beta approximation across all parameter combinations.

\section{Approximating $\mathscr{L}\left(R_{0},R_{i_{1}},\ldots,R_{i_{m}}\right)$
\label{sec:Distribution-of-Rvec}}

Section \ref{sec:Prior-Dist} showed that $\mathscr{L}\left(\Phi\left(\nicefrac{\left(X_{0}-\mu\right)}{\sigma}\right)\right)\approx\mathrm{Beta}\left(a_{\rho,\delta},b_{\rho,\delta}\right)$,
where $a_{\rho,\delta}$ and $b_{\rho,\delta}$ are defined in (\ref{eq:alpha3})--(\ref{eq:beta3}).
Meanwhile, (\ref{eq:prior}) implies that $\left(\left.R_{0}\right|X_{0}\right)-1$
has a binomial distribution with parameters $\left(n,\Phi\left(\nicefrac{\left(X_{0}-\mu\right)}{\sigma}\right)\right)$.
By marrying these two results through the beta-binomial framework,
we see that $R_{0}-1$ is accurately approximated by a beta-binomial
distribution with parameters $\left(n,a_{\rho,\delta},b_{\rho,\delta}\right)$.
Below, we explore this approximation and its implications for the
other ranks.

The beta-binomial law arises by mixing a binomial with a beta-distributed
success probability. Concretely, fix $\alpha,\beta>0$ and let $X_{\alpha,\beta}\sim\mathrm{Beta}\left(\alpha,\beta\right)$
and $\left.Y_{n}\right|X_{\alpha,\beta}\sim\mathrm{Binomial}\left(n,X_{\alpha,\beta}\right)$.
Marginally, $Y_{n}\sim\mathrm{BetaBinomial}\left(n,\alpha,\beta\right)$
with probability mass function

\begin{align}
\Pr\left(Y_{n}=j\right) & =\int_{0}^{1}\Pr\left(\left.Y_{n}=j\right|X_{\alpha,\beta}=z\right)g_{\alpha,\beta}\left(z\right)dz\label{eq:betaBinom}\\
 & =\binom{n}{j}\frac{B\left(\alpha+j,\beta+n-j\right)}{B\left(\alpha,\beta\right)},\textrm{ for }j=0,1,\ldots,n.
\end{align}
Here, $B:\left(0,\infty\right)^{2}\,\medrightarrow\left(0,\infty\right)$
and $g:\left(0,\infty\right)^{2}\times\left(0,1\right)\,\medrightarrow\left(0,\infty\right)$
give the
\begin{align}
\textrm{beta function:}\enskip & B\left(\alpha,\beta\right)\coloneqq\int_{0}^{1}y^{\alpha-1}\left(1-y\right)^{\beta-1}dy,\label{eq:betaFunc}\\
\textrm{beta density function:}\enskip & g_{\alpha,\beta}\left(y\right)\coloneqq\left.y^{\alpha-1}\left(1-y\right)^{\beta-1}\right/B\left(\alpha,\beta\right).\label{eq:betaDens}
\end{align}
Equivalently, if $\left.\xi_{1},\xi_{2},\ldots,\xi_{n}\right|X_{\alpha,\beta}$
are independent $\mathrm{Bernoulli}\left(X_{\alpha,\beta}\right)$
trials, then $Y_{n}=\sum_{i=1}^{n}\xi_{i}$ with $\mathbb{E}Y_{n}=\nicefrac{n\alpha}{\left(\alpha+\beta\right)}$
and $\mathrm{Var}\left(Y_{n}\right)=\nicefrac{n\alpha\beta}{\left(\alpha+\beta\right)^{2}}\left[1+\left(n-1\right)\iota\right]$,
where $\iota\coloneqq\mathrm{Cor}\left(\xi_{1},\xi_{2}\right)=\nicefrac{1}{\left(\alpha+\beta+1\right)}\in\left(0,1\right)$
is the intra-class correlation. Note that, while the mean of $Y_{n}$
coincides with that of a $\mathrm{Binomial}\left(n,\,\mathbb{E}X_{\alpha,\beta}\right)$
distribution, its variance is inflated by a factor of $1+\left(n-1\right)\iota$.
$Y_{n}$ counts positively correlated successes that rise and fall
with the latent beta variable.

Our presentation unfolds in three stages: In Section \ref{subsec:RONO},
we approximate the distribution of $R_{0}$, the rank of the odd normal
out. Building on that, Section \ref{subsec:All-Normals} derives an
approximation for the joint distribution of the rank vector $\left(R_{0},R_{i_{1}},\ldots,R_{i_{m}}\right)$,
where $1\leq i_{1}<i_{2}<\cdots<i_{m}\leq n$ and $1\leq m\leq n$.
Finally, Section \ref{subsec:RvecAsymptotics} investigates the asymptotic
behavior of these joint rank distributions.

\subsection{The Rank of the Odd Normal Out \label{subsec:RONO}}

\begin{figure}
\hfill{}\includegraphics[scale=0.5]{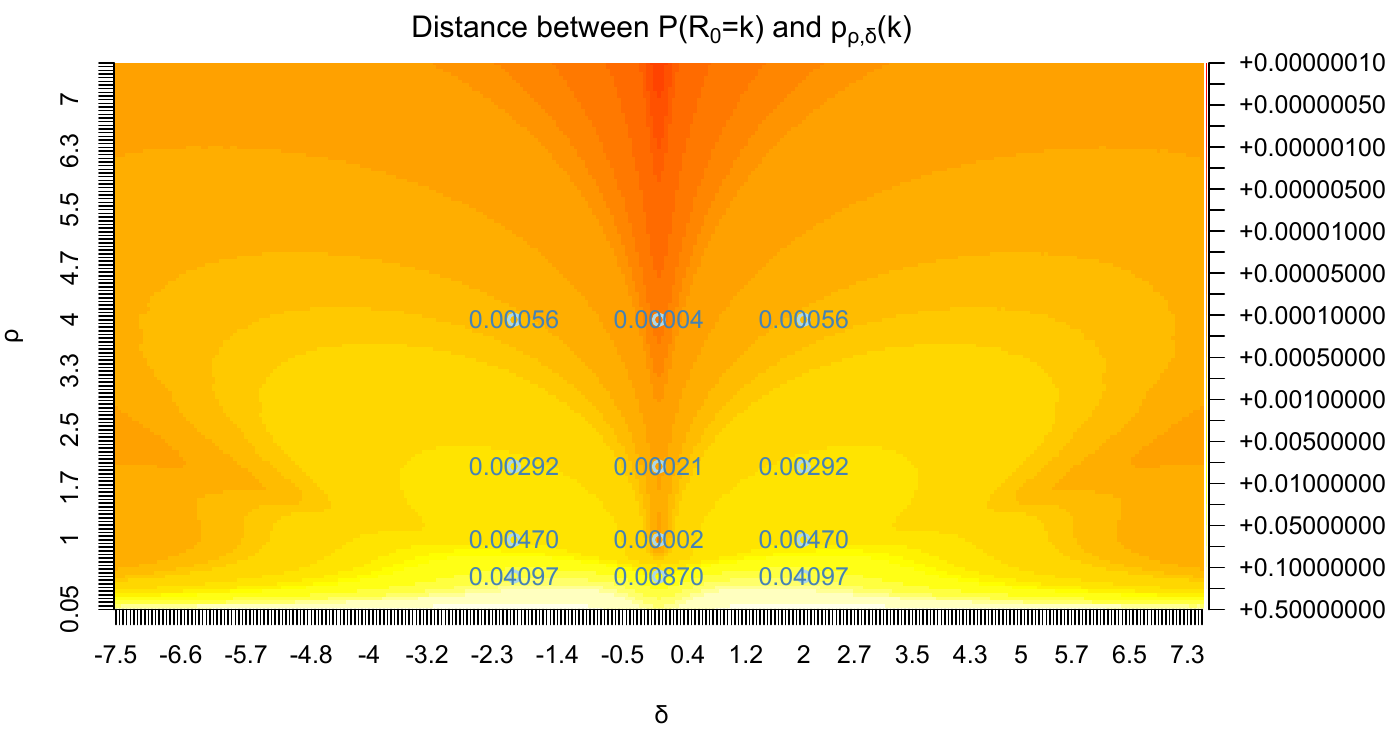}\hfill{}

\caption{1-Wasserstein distance $W_{1}\left(R_{0},R'_{0}\right)$ for $n=25$
and $\rho,\left|\delta\right|\protect\leq7.5$ (see (\ref{eq:W1})).
Light-blue circles---annotated with gray distance values---mark
the $\left(\rho,\delta\right)$ pairs displayed in Figure \ref{fig:R0hist}.
Distances are shown on a log-linear color scale with cutoffs at $\left\{ 1.0,2.5,5.0,7.5\right\} \times10^{-j}$,
for $j\in\left[7\right]$.}

\label{fig:W1R0}
\end{figure}
\begin{figure}
\hfill{}\includegraphics[scale=0.65]{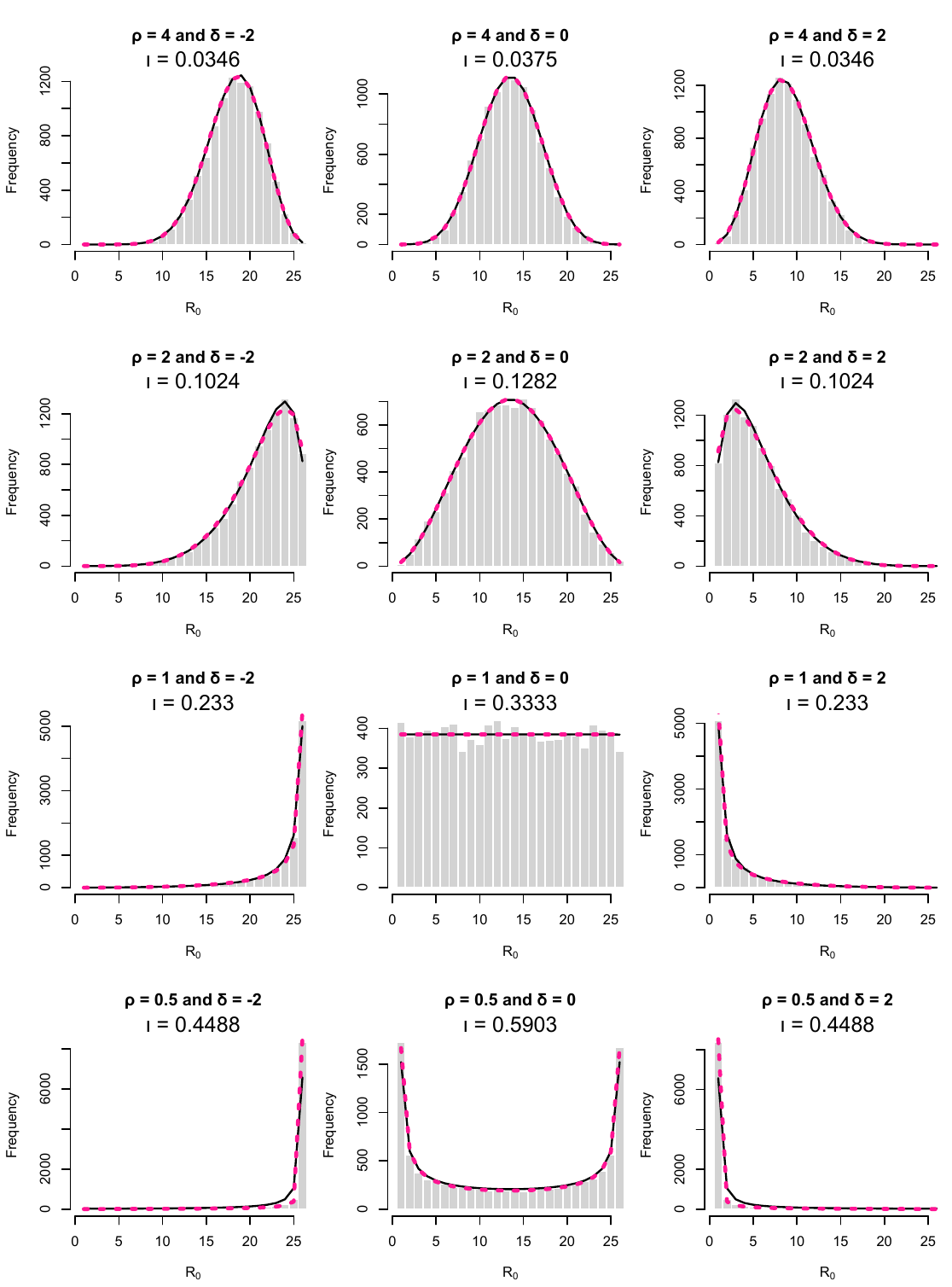}\hfill{}

\caption{Distributions of $R_{0}$ for $n=25$ and $\left(\rho,\delta\right)\in\left\{ \nicefrac{1}{2},1,2,4\right\} \times\left\{ -2,0,2\right\} $.
Gray bars show simulated frequencies of $R_{0}$; solid black curves
are the numerical-integral approximation of $\mathscr{L}\left(R_{0}\right)$
(Equations (\ref{eq:integralPR0}) and (\ref{eq:approxVarIntegral}));
dotted pink curves are the beta-binomial surrogate $\mathscr{L}\left(R'_{0}\right)$
(Equation (\ref{eq:approxPR0})). See Figure \ref{fig:W1R0} for the
corresponding 1-Wasserstein distances $W_{1}\left(R_{0},R'_{0}\right)$.
Each panel\textquoteright s title reports the intra-class correlation
$\iota_{\rho,\delta}\protect\coloneqq\mathrm{Cor}\left(\mathbf{1}_{\left\{ X_{0}\protect\leq X_{1}\right\} },\mathbf{1}_{\left\{ X_{0}\protect\leq X_{2}\right\} }\right)=\nicefrac{1}{\left(1+a_{\rho,\delta}+b_{\rho,\delta}\right)}$.}

\label{fig:R0hist}
\end{figure}

Let $1\leq k\leq n+1$. Combining Equation (\ref{eq:prior}) with
the beta approximation from Section \ref{sec:Prior-Dist} gives
\begin{align}
\Pr\left(R_{0}=k\right) & =\binom{n}{k-1}\int_{0}^{1}z^{k-1}\left(1-z\right)^{n+1-k}f_{\rho,\delta}\left(z\right)dz\label{eq:integralPR0}\\
 & \approx\binom{n}{k-1}\int_{0}^{1}z^{k-1}\left(1-z\right)^{n+1-k}g_{a_{\rho,\delta},b_{\rho,\delta}}\left(z\right)dz,\label{eq:integralApprox}
\end{align}
where $f_{\rho,\delta}$ and $g_{\alpha,\beta}$ denote the densities
of $\mathscr{L}\left(\Phi\left(\nicefrac{\left(X_{0}-\mu\right)}{\sigma}\right)\right)$
and $\mathrm{Beta}\left(\alpha,\beta\right)$, and $a_{\rho,\delta},b_{\rho,\delta}$
are defined in (\ref{eq:alpha3})--(\ref{eq:beta3}). Evaluating
the beta-kernel integral yields the closed-form
\begin{equation}
p_{\rho,\delta}\left(k\right)\coloneqq\binom{n}{k-1}\frac{B\left(a_{\rho,\delta}+k-1,\,b_{\rho,\delta}+n+1-k\right)}{B\left(a_{\rho,\delta},\,b_{\rho,\delta}\right)}\approx\Pr\left(R_{0}=k\right).\label{eq:approxPR0}
\end{equation}
The next paragraphs investigate how accurately this beta-binomial
formula (\ref{eq:approxPR0}) approximates $\Pr\left(R_{0}=k\right)$. 

Figure \ref{fig:W1R0} shows the 1-Wasserstein distance
\begin{equation}
W_{1}\left(R_{0},R'_{0}\right)\coloneqq\sum_{k=1}^{n+1}\left|\Pr\left(R_{0}\leq k\right)-\sum_{j=1}^{k}p_{\rho,\delta}\left(j\right)\right|,\label{eq:W1}
\end{equation}
which quantifies the difference between the true rank law $\mathscr{L}\left(R_{0}\right)$
and its beta-binomial surrogate $\mathscr{L}\left(R'_{0}\right)$,
when $n=25$ and $\rho,\left|\delta\right|\leq7.5$. The integral
in (\ref{eq:integralPR0}) is evaluated via the binning scheme of
(\ref{eq:approxVarIntegral}). In line with Theorem \ref{thm:RONO},
$W_{1}\left(R_{0},R'_{0}\right)$ decreases as either $\rho$ or $\left|\delta\right|$
increase. Figure \ref{fig:R0hist} compares three approximations of
the distribution of $R_{0}$ when $n=25$ and $\left(\rho,\delta\right)\in\left\{ \nicefrac{1}{2},1,2,4\right\} \times\left\{ -2,0,2\right\} $:
\begin{itemize}
\item Gray histograms summarize simulated $R_{0}$ values;
\item Black curves approximate the integral in $\Pr\left(R_{0}=k\right)$;
\item Pink curves use the beta-binomial surrogate $\Pr\left(R'_{0}=k\right)=p_{\rho,\delta}\left(k\right)$.
\end{itemize}
The largest gap between the pink and black curves occurs at $\left(\rho,\left|\delta\right|\right)=\left(\nicefrac{1}{2},2\right)$,
yet there the beta-binomial curves (pink) more faithfully track the
simulations than do the numerical-integral approximations (black)---an
artifact we attribute to floating-point precision (Section \ref{subsec:Benchmarking-Approximations}).
In all cases, $\mathscr{L}\left(R'_{0}\right)$ offers an excellent
approximation of $\mathscr{L}\left(R_{0}\right)$.

We next analyze the limiting behavior of the beta-binomial approximation
in (\ref{eq:approxPR0}). Appendix \ref{sec:Derivations-for-=0000A73.1}
derives the following:

\begin{restatable}{theorem}{limitRONO}

\label{thm:RONO}Let $1\leq k\leq n+1$ and assume that $\nicefrac{\delta}{\rho}$
is held fixed when taking the limit in (\ref{eq:rhoAbsDeltaInf}).
Then, we have:
\begin{align}
\lim_{\rho\rightarrow0^{+}} & \left|\Pr\left(R_{0}=k\right)-p_{\rho,\delta}\left(k\right)\right|=0,\label{eq:rho0}\\
\lim_{\rho\rightarrow\infty} & \left|\Pr\left(R_{0}=k\right)-p_{\rho,\delta}\left(k\right)\right|=0,\label{eq:rhoInf}\\
\lim_{\left|\delta\right|\rightarrow\infty} & \left|\Pr\left(R_{0}=k\right)-p_{\rho,\delta}\left(k\right)\right|=0,\label{eq:absDeltaInf}\\
\lim_{\rho,\left|\delta\right|\rightarrow\infty} & \left|\Pr\left(R_{0}=k\right)-p_{\rho,\delta}\left(k\right)\right|=0.\label{eq:rhoAbsDeltaInf}
\end{align}

\end{restatable}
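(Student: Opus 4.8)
The plan is to reduce the claim to the Wasserstein convergence already established in Corollary~\ref{cor:Was0}. First I would observe that, writing $h_k(z) \coloneqq z^{k-1}(1-z)^{n+1-k}$, Equation~(\ref{eq:integralPR0}) identifies $\Pr(R_0=k)$ with $\binom{n}{k-1}\mathbb{E}\,h_k(Z_{\rho,\delta})$ (since $f_{\rho,\delta}$ is the density of $Z_{\rho,\delta}$), while (\ref{eq:approxPR0}) together with the beta-integral identity behind (\ref{eq:betaBinom}) identifies $p_{\rho,\delta}(k)$ with $\binom{n}{k-1}\mathbb{E}\,h_k\!\left(X_{a_{\rho,\delta},b_{\rho,\delta}}\right)$. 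Hence
\[
\left|\Pr(R_0=k) - p_{\rho,\delta}(k)\right| = \binom{n}{k-1}\left|\mathbb{E}\,h_k(Z_{\rho,\delta}) - \mathbb{E}\,h_k\!\left(X_{a_{\rho,\delta},b_{\rho,\delta}}\right)\right|,
\]
and everything reduces to showing the right-hand side vanishes in each of the four limiting regimes, with $n$ and $k$ held fixed.

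Next I would note that $h_k$ is a polynomial, hence bounded by $1$ on $[0,1]$ and Lipschitz there with a constant $C_n$ depending only on $n$ (one can take $C_n = n$). Kantorovich--Rubinstein duality then gives $\left|\mathbb{E}\,h_k(Z) - \mathbb{E}\,h_k(X)\right| \le C_n\,W_1(Z,X) \le C_n\,W_2(Z,X)$, the last step because $W_1 \le W_2$ for laws on $\mathbb{R}$. Combining with the display above,
\[
\left|\Pr(R_0=k) - p_{\rho,\delta}(k)\right| \le \binom{n}{k-1}\, C_n\; W_2\!\left(Z_{\rho,\delta},\, X_{a_{\rho,\delta},b_{\rho,\delta}}\right).
\]
Corollary~\ref{cor:Was0} states precisely that $W_2\!\left(Z_{\rho,\delta},X_{a_{\rho,\delta},b_{\rho,\delta}}\right)\to 0$ as $\rho\to 0^+$, as $\rho\to\infty$, as $\delta\to-\infty$, as $\delta\to\infty$, and as $\rho,|\delta|\to\infty$ with $\delta/\rho$ fixed --- which covers (\ref{eq:rho0})--(\ref{eq:rhoAbsDeltaInf}) once one observes that $|\delta|\to\infty$ splits into the two one-sided cases. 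This finishes the proof.

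An essentially equivalent route avoids Wasserstein distances: Theorems~\ref{thm:limitZ} and~\ref{thm:limitX} show that $Z_{\rho,\delta}$ and $X_{a_{\rho,\delta},b_{\rho,\delta}}$ converge in distribution to the same limit law $L$ in each regime (a point mass at $1$, $\tfrac12$, $\Phi(-r)$, or $0$, or $\mathrm{Bernoulli}(\Phi(-\delta))$), so by the portmanteau theorem both $\mathbb{E}\,h_k(Z_{\rho,\delta})$ and $\mathbb{E}\,h_k(X_{a_{\rho,\delta},b_{\rho,\delta}})$ converge to $\mathbb{E}\,h_k(L)$ --- using only that $h_k$ is bounded and continuous on $[0,1]$. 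Since the substantive analytic content (the behavior of $a_{\rho,\delta}$, $b_{\rho,\delta}$, and $\mathrm{Var}(Z_{\rho,\delta})$ at the boundary of the parameter space) has already been absorbed into Theorems~\ref{thm:limitZ}--\ref{thm:limitX} and Corollary~\ref{cor:Was0}, I do not anticipate a genuine obstacle here; the only point requiring care is the bookkeeping in the joint regime $\rho,|\delta|\to\infty$, where one must hold $\delta/\rho=r$ fixed so that the common limit $\Phi(-r)$ is well defined.
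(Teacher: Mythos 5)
Your proposal is correct, but it takes a genuinely different route from the paper's. The paper works at the level of densities: it bounds $\left|\Pr\left(R_{0}=k\right)-p_{\rho,\delta}\left(k\right)\right|$ by $\int_{0}^{1}\left|f_{\rho,\delta}\left(z\right)-g_{a_{\rho,\delta},b_{\rho,\delta}}\left(z\right)\right|dz$ (using $\Pr\left(B_{z}=k-1\right)\leq1$) and then invokes four separate propositions (Propositions \ref{prop:densRho0}--\ref{prop:densRhoAbsDeltaInf}) establishing pointwise convergence of $\left|f_{\rho,\delta}\left(z\right)-g_{a_{\rho,\delta},b_{\rho,\delta}}\left(z\right)\right|$ to zero in each regime, combined with a dominated-convergence step. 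You instead treat the binomial kernel $h_{k}$ as a bounded Lipschitz test function and pass directly through the weak (or $W_{2}$) convergence of the two mixing laws already recorded in Theorems \ref{thm:limitZ}--\ref{thm:limitX} and Corollary \ref{cor:Was0}; there is no circularity, since those results are established in Section \ref{subsec:Shared-Limiting-Distributions} independently of Theorem \ref{thm:RONO}. Your argument is shorter, reuses prior results rather than proving new density asymptotics, and yields an explicit quantitative bound $\left|\Pr\left(R_{0}=k\right)-p_{\rho,\delta}\left(k\right)\right|\leq n\binom{n}{k-1}W_{2}\left(Z_{\rho,\delta},X_{a_{\rho,\delta},b_{\rho,\delta}}\right)$ that ties the rank-probability error directly to the distance mapped in Figure \ref{fig:distZX}; your portmanteau variant is more economical still, needing only that $h_{k}$ is bounded and continuous. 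What the paper's route aims for in exchange is stronger, $L^{1}$ (total-variation--type) control of the densities, which would bound the error uniformly in $k$ and for arbitrary bounded measurable functionals. Indeed, your approach is arguably the more robust of the two in the degenerate regimes (e.g.\ $\left|\delta\right|\rightarrow\infty$, where both laws collapse to a boundary point mass and pointwise convergence of densities on the open interval does not by itself force the $L^{1}$ difference to vanish), since weak convergence against a continuous test function sidesteps that delicacy entirely.
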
In summary, the approximation $p_{\rho,\delta}\left(k\right)$
in (\ref{eq:approxPR0}) closely matches $\Pr\left(R_{0}=k\right)$.
Equivalently, $R_{0}-1$ is approximately $\mathrm{BetaBinomial}\left(n,a_{\rho,\delta},b_{\rho,\delta}\right)$-distributed.

We now turn to the first two moments of $R_{0}$. First, define the
intra-class correlation 
\begin{align}
\iota_{\rho,\delta} & \coloneqq\mathrm{Cor}\left(\mathbf{1}_{\left\{ X_{1}\leq X_{0}\right\} },\mathbf{1}_{\left\{ X_{2}\leq X_{0}\right\} }\right)=\nicefrac{1}{\left(a_{\rho,\delta}+b_{\rho,\delta}+1\right)}\label{eq:intraClassCor}\\
 & =\mathrm{Var}\left(Z_{\rho,\delta}\right)\left/\left[\Phi\left(\nicefrac{-\delta}{\sqrt{\rho^{2}+1}}\right)\Phi\left(\nicefrac{\delta}{\sqrt{\rho^{2}+1}}\right)\right],\right.\label{eq:useVarBound}
\end{align}
which lies in $\left(0,1\right)$ by (\ref{eq:varBounds}). Applying
(\ref{eq:defR0}) together with Theorems \ref{thm:theGenMean}--\ref{thm:theGenVar}
gives
\begin{align}
\mathbb{E}R_{0} & =1+n\Phi\left(\nicefrac{-\delta}{\sqrt{\rho^{2}+1}}\right)\textrm{ and }\label{eq:meanR0}\\
\mathrm{Var}\left(R_{0}\right) & =n\Phi\left(\nicefrac{-\delta}{\sqrt{\rho^{2}+1}}\right)\Phi\left(\nicefrac{\delta}{\sqrt{\rho^{2}+1}}\right)\left[1+\left(n-1\right)\iota_{\rho,\delta}\right].\label{eq:varR0}
\end{align}
Although these formulae follow by straightforward calculation, they
match exactly the mean and variance of a $\mathrm{BetaBinomial}\left(n,a_{\rho,\delta},b_{\rho,\delta}\right)$
law for $R_{0}-1$. This agreement stems from our choice of $a_{\rho,\delta}$
and $b_{\rho,\delta}$ that put $\mathbb{E}X_{a_{\rho,\delta},b_{\rho,\delta}}=\mathbb{E}Z_{\rho,\delta}$
and $\mathrm{Var}\left(X_{a_{\rho,\delta},b_{\rho,\delta}}\right)=\mathrm{Var}\left(Z_{\rho,\delta}\right)$
(see Section \ref{subsec:A-Beta-Approximation}).

\subsection{The Approximate Distribution of $\left(R_{0},R_{i_{1}},\ldots,R_{i_{m}}\right)$
\label{subsec:All-Normals}}

In this section, we extend the approximation $\Pr\left(R_{0}=k\right)\approx p_{\rho,\delta}\left(k\right)$
to the joint distribution of $\left(R_{0},R_{i_{1}},\ldots,R_{i_{m}}\right)$.
The following proposition, proved in Appendix \ref{sec:Derivations-for-=0000A73.2},
underpins our approximations:

\begin{restatable}{proposition}{otherRanks}

\label{prop:otherRanks}Let $1\leq m\leq n$, choose indices $1\leq i_{1}<\cdots<i_{m}\leq n$,
and let $j_{0},j_{1},\ldots,j_{m}$ be $m+1$ distinct elements of
$\left\{ 1,2,\ldots,n+1\right\} $. Then, we have the following joint
rank distributions:
\begin{align}
\Pr\left(R_{0}=j_{0},R_{i_{1}}=j_{1},\ldots,R_{i_{m}}=j_{m}\right) & =\frac{\Pr\left(R_{0}=j_{0}\right)}{n\left(n-1\right)\cdots\left(n-m+1\right)},\label{eq:withR0}\\
\Pr\left(R_{i_{1}}=j_{1},R_{i_{2}}=j_{2},\ldots,R_{i_{m}}=j_{m}\right) & =\frac{1-\sum_{k=1}^{m}\Pr\left(R_{0}=j_{k}\right)}{n\left(n-1\right)\cdots\left(n-m+1\right)}.\label{eq:noR0}
\end{align}
With $U\sim\mathrm{Uniform}\left[n\right]$ and $(V,W)\sim\mathrm{Uniform}\left\{ \left(i,j\right)\in\left[n\right]^{2}:i\ne j\right\} $,
define:
\[
\begin{array}{lll}
\mu_{Z}\coloneqq\mathbb{E}Z_{\rho,\delta}\eqqcolon1-\bar{\mu}_{Z}, & v_{Z}\coloneqq\mathrm{Var}\left(Z_{\rho,\delta}\right), & \iota_{\rho,\delta}\coloneqq\frac{v_{Z}}{\mu_{Z}\bar{\mu}_{Z}}\textrm{ from \eqref{eq:intraClassCor}},\\
\mu_{U}\coloneqq\frac{n+1}{2}=\mathbb{E}U, & v_{U}\coloneqq\frac{n^{2}-1}{12}=\mathrm{Var}\left(U\right), & c_{1,2}\coloneqq-\frac{n+1}{12}=\mathrm{Cov}\left(V,W\right).
\end{array}
\]
(Theorems \ref{thm:theGenMean}--\ref{thm:theGenVar} supply $\mu_{Z},v_{Z}$.)
With these definitions, the first two moments and covariances of the
ranks satisfy
\begin{align}
\mathbb{E}R_{1} & =\mu_{U}+\bar{\mu}_{Z},\label{eq:meanR1}\\
\mathrm{Var}\left(R_{1}\right) & =v_{U}+n\mu_{Z}\bar{\mu}_{Z}\left[1-\left.\left(n-1\right)\iota_{\rho,\delta}\right/n\right],\label{eq:varR1}\\
\mathrm{Cov}\left(R_{0},R_{1}\right) & =-\mu_{Z}\bar{\mu}_{Z}\left[1+\left(n-1\right)\iota_{\rho,\delta}\right],\label{eq:covR0R1}\\
\mathrm{Cov}\left(R_{1},R_{2}\right) & =c_{1,2}+2\mu_{Z}\bar{\mu}_{Z}\left[\iota_{\rho,\delta}-\nicefrac{1}{2}\right].\label{eq:covR1R2}
\end{align}
Finally, (\ref{eq:meanR0}) and (\ref{eq:varR0}) give $\mathbb{E}R_{0}=1+n\mu_{Z}$
and $\mathrm{Var}\left(R_{0}\right)=-n\mathrm{Cov}\left(R_{0},R_{1}\right)$.

\end{restatable}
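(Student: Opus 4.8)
The plan is to express every rank through two families of indicators and then lean on the exchangeability of $X_{1},\dots,X_{n}$. Set $\xi_{i}\coloneqq\mathbf{1}_{\{X_{i}\le X_{0}\}}$ for $1\le i\le n$, so that $R_{0}=1+\sum_{i=1}^{n}\xi_{i}$ by \eqref{eq:defR0}, and let $\tilde{R}_{i}\coloneqq\sum_{j=1}^{n}\mathbf{1}_{\{X_{j}\le X_{i}\}}$ be the rank of $X_{i}$ among $X_{1},\dots,X_{n}$ alone; since $\mathbf{1}_{\{X_{0}\le X_{i}\}}=1-\xi_{i}$ a.s., one has $R_{i}=\tilde{R}_{i}+1-\xi_{i}$ for $1\le i\le n$. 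In the normalization of footnote~\ref{fn:one-case}, \eqref{eq:prior} says that given $X_{0}$ the $\xi_{i}$ are i.i.d.\ $\mathrm{Bernoulli}(Z_{\rho,\delta})$; together with the identity $\mathbb{E}Z_{\rho,\delta}^{k}=\Pr(X_{1}\le X_{0},\dots,X_{k}\le X_{0})$ recorded after Theorem~\ref{thm:theGenVar}, this gives $\mathbb{E}\xi_{i}=\mu_{Z}$, $\mathrm{Var}(\xi_{i})=\mu_{Z}\bar{\mu}_{Z}$, and $\mathrm{Cov}(\xi_{i},\xi_{j})=\mathbb{E}Z_{\rho,\delta}^{2}-\mu_{Z}^{2}=v_{Z}$ for $i\ne j$.

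For the joint laws \eqref{eq:withR0}--\eqref{eq:noR0} I would argue by exchangeability. Conditionally on $X_{0}$ the variables $X_{1},\dots,X_{n}$ are i.i.d.\ and continuous, and $\{R_{0}=j_{0}\}$ depends on them only through the symmetric statistic $\#\{i:X_{i}\le X_{0}\}$; hence, conditionally on $\{R_{0}=j_{0}\}$, the vector $(X_{1},\dots,X_{n})$ is still exchangeable. Therefore $X_{0}$ occupies rank $j_{0}$ and the $n$ remaining positions $\{1,\dots,n+1\}\setminus\{j_{0}\}$ are assigned to $X_{1},\dots,X_{n}$ uniformly at random, so $(R_{i_{1}},\dots,R_{i_{m}})\mid\{R_{0}=j_{0}\}$ is uniform over the $n(n-1)\cdots(n-m+1)$ injections of $\{i_{1},\dots,i_{m}\}$ into $\{1,\dots,n+1\}\setminus\{j_{0}\}$; this is \eqref{eq:withR0}. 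Summing \eqref{eq:withR0} over $j_{0}\in\{1,\dots,n+1\}$ and noting that the summand vanishes whenever $j_{0}\in\{j_{1},\dots,j_{m}\}$ (two distinct values cannot share a rank) yields \eqref{eq:noR0}.

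For the moments, \eqref{eq:meanR0}--\eqref{eq:varR0} with Theorem~\ref{thm:theGenMean} give $\mathbb{E}R_{0}=1+n\mu_{Z}$ and $\mathrm{Var}(R_{0})=n\mu_{Z}\bar{\mu}_{Z}[1+(n-1)\iota_{\rho,\delta}]$ (using $\Phi(\nicefrac{\delta}{\sqrt{\rho^{2}+1}})=\bar{\mu}_{Z}$). Since $R_{0}+\sum_{i=1}^{n}R_{i}\equiv\binom{n+2}{2}$ is constant and $X_{1},\dots,X_{n}$ are exchangeable, $n\,\mathbb{E}R_{1}=\binom{n+2}{2}-\mathbb{E}R_{0}$ gives $\mathbb{E}R_{1}=\tfrac{n+3}{2}-\mu_{Z}=\mu_{U}+\bar{\mu}_{Z}$, and $\mathrm{Var}(R_{0})+n\,\mathrm{Cov}(R_{0},R_{1})=0$ gives \eqref{eq:covR0R1} together with $\mathrm{Var}(R_{0})=-n\,\mathrm{Cov}(R_{0},R_{1})$. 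The substantive step is $\mathrm{Cov}(R_{1},R_{2})$: expanding $R_{i}=\tilde{R}_{i}+1-\xi_{i}$ and using $\mathrm{Cov}(\tilde{R}_{1},\xi_{2})=\mathrm{Cov}(\xi_{1},\tilde{R}_{2})$ (relabel $1\leftrightarrow2$),
\[
\mathrm{Cov}(R_{1},R_{2})=\mathrm{Cov}(\tilde{R}_{1},\tilde{R}_{2})+\mathrm{Cov}(\xi_{1},\xi_{2})-2\,\mathrm{Cov}(\xi_{1},\tilde{R}_{2}).
\]
Here $(\tilde{R}_{1},\dots,\tilde{R}_{n})$ is a uniform permutation of $[n]$, so $(\tilde{R}_{1},\tilde{R}_{2})\stackrel{d}{=}(V,W)$ and $\mathrm{Cov}(\tilde{R}_{1},\tilde{R}_{2})=c_{1,2}$; and $\mathrm{Cov}(\xi_{1},\xi_{2})=v_{Z}$. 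Writing $\tilde{R}_{2}=1+\mathbf{1}_{\{X_{1}\le X_{2}\}}+\sum_{j=3}^{n}\mathbf{1}_{\{X_{j}\le X_{2}\}}$, the $j\ge3$ terms are independent of $\xi_{1}$, so $\mathrm{Cov}(\xi_{1},\tilde{R}_{2})=\Pr(X_{1}\le X_{0},X_{1}\le X_{2})-\tfrac12\mu_{Z}$; I would evaluate the probability by splitting on whether $X_{2}\le X_{0}$, using $\Pr(X_{1}\le X_{2}\le X_{0})=\tfrac12\mathbb{E}Z_{\rho,\delta}^{2}$ (the event $\{X_{1}\le X_{0},X_{2}\le X_{0}\}$ splits a.s.\ into the two equiprobable orderings of $X_{1},X_{2}$ below $X_{0}$) and $\Pr(X_{1}\le X_{0}<X_{2})=\mathbb{E}[Z_{\rho,\delta}(1-Z_{\rho,\delta})]=\mu_{Z}-\mathbb{E}Z_{\rho,\delta}^{2}$, whence $\mathrm{Cov}(\xi_{1},\tilde{R}_{2})=\tfrac12(\mu_{Z}\bar{\mu}_{Z}-v_{Z})$. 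Back-substitution gives $\mathrm{Cov}(R_{1},R_{2})=c_{1,2}+2v_{Z}-\mu_{Z}\bar{\mu}_{Z}$, i.e.\ \eqref{eq:covR1R2} after $v_{Z}=\mu_{Z}\bar{\mu}_{Z}\iota_{\rho,\delta}$; finally the linear relation $\mathrm{Var}(R_{1})+(n-1)\mathrm{Cov}(R_{1},R_{2})=-\mathrm{Cov}(R_{0},R_{1})$ (from $R_{1}+[R_{0}+\sum_{i\ge2}R_{i}]$ constant) solves to $\mathrm{Var}(R_{1})=v_{U}+n\mu_{Z}\bar{\mu}_{Z}-(n-1)v_{Z}$, which is \eqref{eq:varR1} once one uses $-(n-1)c_{1,2}=(n^{2}-1)/12=v_{U}$.

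The main obstacle is the mixed covariance $\mathrm{Cov}(\xi_{1},\tilde{R}_{2})$, the only place where three of the variables interact at once. Conditioning on $X_{1}$ to compute $\Pr(X_{1}\le X_{0},X_{1}\le X_{2})$ leads to a genuine bivariate-normal orthant probability with no elementary closed form; the device that makes it collapse is instead to condition on $X_{0}$ and exploit the exchangeability of $X_{1},X_{2}$ below $X_{0}$, reducing it to the moment $\mathbb{E}Z_{\rho,\delta}^{2}$ supplied by Theorems~\ref{thm:theGenMean}--\ref{thm:theGenVar}. Everything else is bookkeeping with exchangeability and the constant row-sum identities $\sum_{i=0}^{n}R_{i}\equiv\binom{n+2}{2}$ and $\sum_{i=1}^{n}\tilde{R}_{i}\equiv\binom{n+1}{2}$.
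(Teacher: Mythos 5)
Your proof is correct. The first half---deriving \eqref{eq:withR0} by noting that, conditionally on $X_{0}$ and on the symmetric event $\left\{ R_{0}=j_{0}\right\}$, the vector $\left(X_{1},\ldots,X_{n}\right)$ remains exchangeable, so the remaining ranks are uniform over the injections of $\left\{ i_{1},\ldots,i_{m}\right\}$ into $\left[n+1\right]\setminus\left\{ j_{0}\right\}$, and then summing over $j_{0}$ to get \eqref{eq:noR0}---is the same argument the paper gives. The moment computations, however, take a genuinely different route. The paper works entirely from the pmfs just derived: it evaluates $\sum_{k}\nicefrac{k\left[1-\Pr\left(R_{0}=k\right)\right]}{n}$ and $\sum_{k}\nicefrac{k^{2}\left[1-\Pr\left(R_{0}=k\right)\right]}{n}$ for $\mathbb{E}R_{1}$ and $\mathrm{Var}\left(R_{1}\right)$, and double sums for $\mathbb{E}\left[R_{0}R_{1}\right]$ and $\mathbb{E}\left[R_{1}R_{2}\right]$ using \eqref{eq:withR0} with $m=1$ and \eqref{eq:noR0} with $m=2$, reducing everything to $\mathbb{E}R_{0}$ and $\mathrm{Var}\left(R_{0}\right)$ by algebra. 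You instead decompose $R_{i}=\tilde{R}_{i}+1-\xi_{i}$, use the constant total $\sum_{i=0}^{n}R_{i}\equiv\binom{n+2}{2}$ together with exchangeability to obtain $\mathbb{E}R_{1}$ and $\mathrm{Cov}\left(R_{0},R_{1}\right)$ (and then $\mathrm{Var}\left(R_{1}\right)$ from $\mathrm{Cov}\left(R_{1},R_{2}\right)$), and isolate the one genuinely new quantity, $\mathrm{Cov}\left(\xi_{1},\tilde{R}_{2}\right)$, which you collapse to $\tfrac{1}{2}\left(\mu_{Z}\bar{\mu}_{Z}-v_{Z}\right)$ by conditioning on $X_{0}$ and exploiting the exchangeability of $X_{1},X_{2}$ below $X_{0}$. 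I verified the trivariate step $\Pr\left(X_{1}\leq X_{0},X_{1}\leq X_{2}\right)=\mu_{Z}-\tfrac{1}{2}\mathbb{E}Z_{\rho,\delta}^{2}$ and all back-substitutions; they reproduce \eqref{eq:meanR1}--\eqref{eq:covR1R2} exactly. Your route is more structural---it makes visible why the intra-class correlation $\iota_{\rho,\delta}$ enters $\mathrm{Cov}\left(R_{1},R_{2}\right)$ with the coefficient it does---at the cost of one extra ordering probability; the paper's route is pure summation bookkeeping that needs no probabilistic input beyond \eqref{eq:withR0}--\eqref{eq:noR0} and the moments of $R_{0}$.
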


\begin{figure}
\hfill{}\includegraphics[scale=0.65]{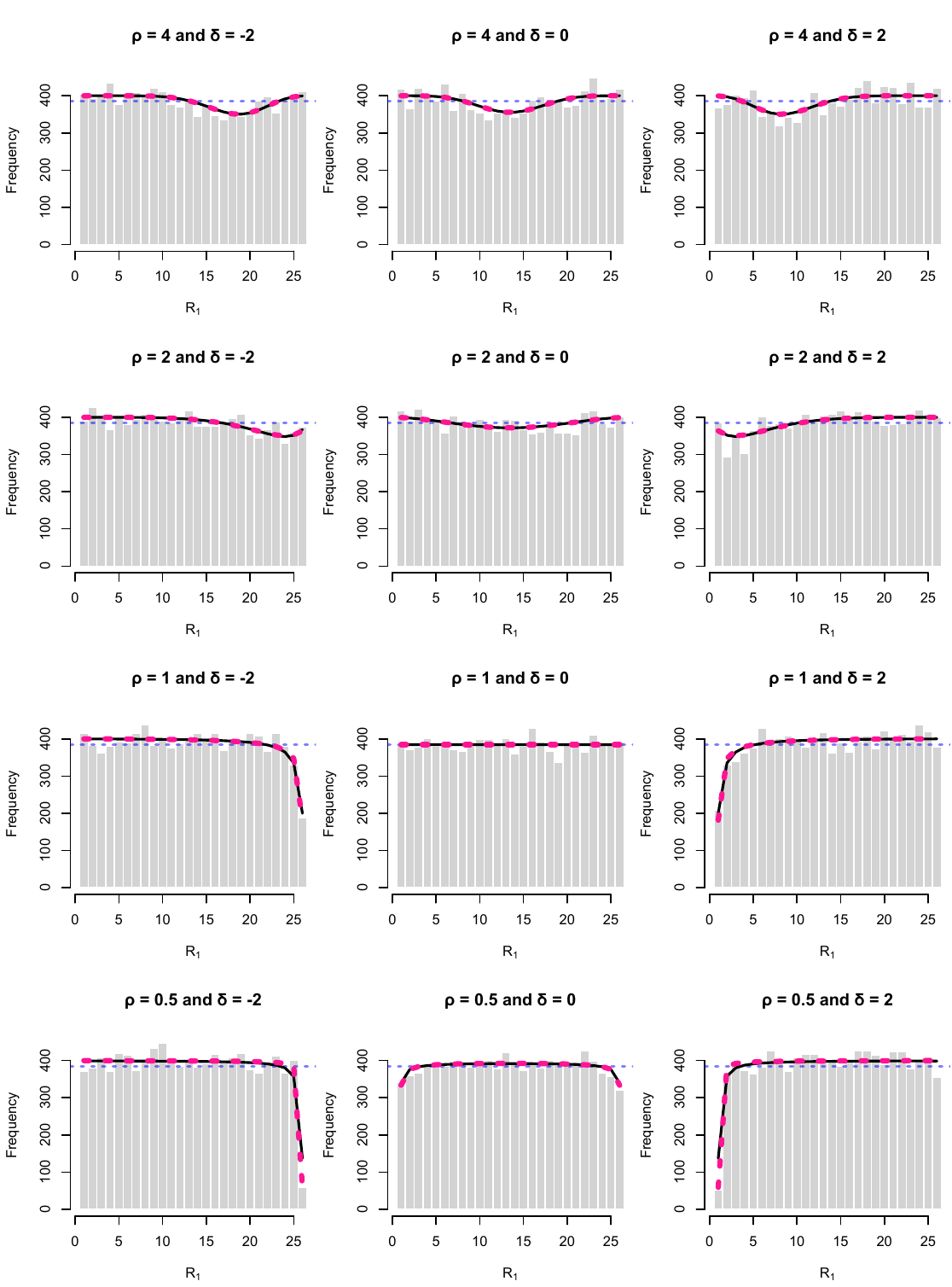}\hfill{}

\caption{Distributions of $R_{1}$ for $n=25$ and $\left(\rho,\delta\right)\in\left\{ \nicefrac{1}{2},1,2,4\right\} \times\left\{ -2,0,2\right\} $.
Gray bars show simulated frequencies of $R_{1}$; solid black curves
are the numerical-integral approximation of $\mathscr{L}\left(R_{1}\right)$
(Equations (\ref{eq:noR0}) and (\ref{eq:approxVarIntegral})); dotted
pink curves are the beta-binomial surrogate $\mathscr{L}\left(R'_{1}\right)$
(Equation (\ref{eq:approxNoR0})). The dotted blue line at $y=\frac{10000}{n+1}\approx384.6$
marks the expected number of ranks under $\mathrm{Uniform}\left[n+1\right]$.}

\label{fig:R1hist}
\end{figure}

Replacing $\Pr\left(R_{0}=k\right)$ in (\ref{eq:withR0})--(\ref{eq:noR0})
with the beta-binomial surrogate $p_{\rho,\delta}\left(k\right)$
from (\ref{eq:approxPR0}), we derive closed-form joint-rank approximations:
\begin{align}
\Pr\left(R'_{0}=j_{0},R'_{i_{1}}=j_{1},\ldots,R'_{i_{m}}=j_{m}\right) & \coloneqq\frac{p_{\rho,\delta}\left(j_{0}\right)}{n\left(n-1\right)\cdots\left(n-m+1\right)},\tag{\ref{eq:withR0}\ensuremath{'}}\label{eq:approxWithR0}\\
\Pr\left(R'_{i_{1}}=j_{1},R'_{i_{2}}=j_{2},\ldots,R'_{i_{m}}=j_{m}\right) & \coloneqq\frac{1-\sum_{k=1}^{m}p_{\rho,\delta}\left(j_{k}\right)}{n\left(n-1\right)\cdots\left(n-m+1\right)}.\tag{\ref{eq:noR0}\ensuremath{'}}\label{eq:approxNoR0}
\end{align}
Figure \ref{fig:R0hist} then compares three ways to approximate the
marginal distribution of $R_{1}$ when $n=25$ and $\left(\rho,\delta\right)\in\left\{ \nicefrac{1}{2},1,2,4\right\} \times\left\{ -2,0,2\right\} $: 
\begin{itemize}
\item Gray histograms summarize simulated $R_{1}$ values;
\item Black curves approximate the integral in $\Pr\left(R_{1}=k\right)=\frac{1-\Pr\left(R_{0}=k\right)}{n}$;
\item Pink curves use the beta-binomial surrogate $\Pr\left(R'_{1}=k\right)=\frac{1-p_{\rho,\delta}\left(k\right)}{n}$.
\end{itemize}
The surrogate law $\mathscr{L}\left(R'_{1}\right)$ closely tracks
the true law $\mathscr{L}\left(R_{1}\right)$. By construction, $\mathscr{L}\left(R_{1}\right)$
is the \textquotedblleft complement\textquotedblright{} of $\mathscr{L}\left(R_{0}\right)$,
placing mass where $\mathscr{L}\left(R_{0}\right)$ recedes. Symmetry
gives $\mathscr{L}\left(R_{1}\right)=\mathscr{L}\left(R_{2}\right)=\cdots=\mathscr{L}\left(R_{n}\right)$,
so that $\mathbb{E}\sum_{i=0}^{n}\mathbf{1}_{\left\{ R_{i}=k\right\} }=\sum_{i=0}^{n}\Pr\left(R_{i}=k\right)=1$
for $1\leq k\leq n+1$ (and likewise for the $R'_{i}$). Equivalently,
on average exactly one rank equals $k$. Finally, Theorem \ref{thm:RONO},
together with the triangle inequality, implies that both $\left|\textrm{\eqref{eq:withR0}}-\textrm{\eqref{eq:approxWithR0}}\right|$
and $\left|\textrm{\eqref{eq:noR0}}-\textrm{\eqref{eq:approxNoR0}}\right|$
tend to zero in the asymptotic regimes covered by the theorem.

Turning our attention to moment analysis, we outline five observations
that capture the essence of normal rank behavior when we have one
outlier.
\begin{enumerate}
\item Because $\mathbb{E}R_{1}$, $\mathrm{Var}\left(R_{1}\right)$, $\mathrm{Cov}\left(R_{0},R_{1}\right)$,
and $\mathrm{Cov}\left(R_{1},R_{2}\right)$ depend only on $\mathbb{E}R{}_{0}=\mathbb{E}R'_{0}$
and $\mathrm{Var}\left(R_{0}\right)=\mathrm{Var}\left(R'_{0}\right)$,
the surrogate ranks $R'_{i}$ share exactly the same means, variances
and covariances as the true ranks $R_{i}$.
\item Equations (\ref{eq:meanR0}) and (\ref{eq:meanR1}) then yield $\sum_{i=0}^{n}\mathbb{E}R_{i}=\sum_{i=0}^{n}\mathbb{E}R'_{i}=\sum_{k=1}^{n+1}k$.
\item Values not involving $R_{0}$ depend only weakly on $\left(\rho,\delta\right)\in\left(0,\infty\right)\times\mathbb{R}$.
This follows from the exchangeability of $R_{1},R_{2},\ldots,R_{n}$.
\begin{enumerate}
\item $\mathbb{E}R_{0}=1+n\Pr\left(X_{1}\leq X_{0}\right)$ ranges over
$\left(1,\,n+1\right)$ as $\left(\rho,\delta\right)$ vary while
$\mathbb{E}R_{1}=\mathbb{E}U+\Pr\left(X_{0}\leq X_{1}\right)$ lies
strictly between $\nicefrac{\left(n+1\right)}{2}$ and $\nicefrac{\left(n+3\right)}{2}$.
\item Defining $v_{Z}$ as in Proposition \ref{prop:otherRanks} and letting
$n\rightarrow\infty$ gives
\[
\begin{array}{ll}
\mathrm{Var}\left(\nicefrac{R_{0}}{n}\right)=v_{Z}+\mathcal{O}\left(\nicefrac{1}{n}\right), & \mathrm{Cov}\left(\nicefrac{R_{0}}{n},\nicefrac{R_{1}}{n}\right)=\nicefrac{-v_{Z}}{n}+\mathcal{O}\left(\nicefrac{1}{n^{2}}\right),\\
\mathrm{Var}\left(\nicefrac{R_{1}}{n}\right)=\nicefrac{1}{12}+\mathcal{O}\left(\nicefrac{1}{n}\right), & \mathrm{Cov}\left(\nicefrac{R_{1}}{n},\nicefrac{R_{2}}{n}\right)=\nicefrac{-1}{12n}+\mathcal{O}\left(\nicefrac{1}{n^{2}}\right).
\end{array}
\]
\end{enumerate}
\item For $n\geq2$, $\mathrm{sign}\left\{ \mathrm{Var}\left(R_{0}\right)-\mathrm{Var}\left(R_{1}\right)\right\} =\mathrm{sign}\left\{ \mathrm{Var}\left(Z_{\rho,\delta}\right)-\nicefrac{1}{12}\right\} $. 
\item Finally, $\mathrm{Var}\left(R_{0}\right)=\mathrm{Var}\left(R_{1}\right)=-\mathrm{Cov}\left(R_{0},R_{1}\right)=\Phi\left(\nicefrac{-\delta}{\sqrt{\rho^{2}+1}}\right)\Phi\left(\nicefrac{\delta}{\sqrt{\rho^{2}+1}}\right)$
in the special case of a single intra-class normal.
\end{enumerate}

\subsection{The Asymptotic Distributions of $\left(R_{0},R_{i_{1}},\ldots,R_{i_{m}}\right)$
\label{subsec:RvecAsymptotics}}

As we move into asymptotic regimes---letting $\rho,$ $\delta$,
or $n$ grow large or small---the joint rank law admits elegant simplifications.
To state our main result, we introduce three pieces of notation:
\begin{enumerate}
\item Let $\mathbf{0}_{k}$ and $\mathbf{1}_{k}$ denote the $k$-dimensional
vectors of zeros and ones.
\item For $1\leq m\leq n$, define $\mathcal{S}_{n,m}\coloneqq\left\{ \mathbf{j}\in\left[n\right]^{m}:j_{1},j_{2},\ldots,j_{m}\textrm{ all distinct}\right\} $.
Its cardinality is $\left|\mathcal{S}_{n,m}\right|\coloneqq\prod_{i=1}^{m}\left(n-i+1\right)$.
\item For $0<z<1$ and $\mathbf{v}\coloneqq\left(v_{0},v_{1},\ldots,v_{m}\right)\in\mathcal{S}_{n+1,m+1}$,
let $\mathbf{V}_{z}$ be the random vector in $\mathcal{S}_{n+1,m+1}$
with mass function
\begin{equation}
\Pr\left(\mathbf{V}_{z}=\mathbf{v}\right)=\frac{\binom{n}{v_{0}-1}z^{v_{0}-1}\left(1-z\right)^{n+1-v_{0}}}{\prod_{i=1}^{m}\left(n-i+1\right)}.
\end{equation}
\end{enumerate}
Appendix \ref{sec:Derivations-for-=0000A73.3} then establishes the
following theorem:

\begin{restatable}{theorem}{RankAsymp}

\label{thm:RankAsymp}Define $\mathbf{R}_{0,\mathbf{i}}$, $\mathbf{U}_{n,m}$,
$\xi$, $\boldsymbol{\xi}_{m}$, $\boldsymbol{\varUpsilon}_{m}$,
and $Z_{\rho,\delta}$ as follows:
\begin{enumerate}
\item Fix $1\leq i_{1}<i_{2}<\cdots<i_{m}\leq n$ and write $\mathbf{R}_{0,\mathbf{i}}\coloneqq\left(R_{0},R_{i_{1}},\ldots,R_{i_{m}}\right)$.
\item Let $\mathbf{U}_{n,m}\sim\mathrm{Uniform}\left(\mathcal{S}{}_{n,m}\right)$
and $\xi\sim\mathrm{Bernoulli}\left(\Phi\left(-\delta\right)\right)$
be independent. Also write $\boldsymbol{\xi}_{m}\coloneqq\left(\xi,\xi,\ldots,\xi\right)\in\left\{ \mathbf{0}_{m},\mathbf{1}_{m}\right\} $.
\item Let $\boldsymbol{\varUpsilon}_{m}\sim\mathrm{Uniform}\left(0,1\right)^{m}$
and $Z_{\rho,\delta}\coloneqq\Phi\left(\nicefrac{\left(X_{0}-\mu\right)}{\sigma}\right)$
be independent. 
\end{enumerate}
Then, as $\rho$, $\delta$, or $n$ diverge, $\mathbf{R}_{0,\mathbf{i}}$
converges in distribution as follows:
\begin{align}
\mathbf{R}_{0,\mathbf{i}} & \implies\left(n+1,\,\mathbf{U}_{n,m}\right)\textrm{ as }\delta\rightarrow-\infty,\label{eq:deltaNegInfRvec}\\
\mathbf{R}_{0,\mathbf{i}} & \implies\left(1+n\xi,\,\mathbf{1}_{m}-\boldsymbol{\xi}_{m}+\mathbf{U}_{n,m}\right)\textrm{ as }\rho\rightarrow0^{+},\label{eq:rhoZeroRvec}\\
\mathbf{R}_{0,\mathbf{i}} & \implies\left(1,\,1+\mathbf{U}_{n,m}\right)\textrm{ as }\delta\rightarrow\infty,\label{eq:deltaInfRvec}\\
\mathbf{R}_{0,\mathbf{i}} & \implies\mathbf{V}_{\nicefrac{1}{2}}\textrm{ as }\rho\rightarrow\infty,\label{eq:rhoInfRvec}\\
\mathbf{R}_{0,\mathbf{i}} & \implies\mathbf{V}_{\Phi\left(-r\right)}\textrm{ as }\rho,\left|\delta\right|\rightarrow\infty,\,\nicefrac{\delta}{\rho}=r\textrm{ fixed},\label{eq:rhoAbsDeltaInfRvec}\\
\nicefrac{1}{n}\left(\mathbf{R}_{0,\mathbf{i}}-\mathbf{1}_{m+1}\right) & \implies\left(Z_{\rho,\delta},\,\boldsymbol{\varUpsilon}_{m}\right)\textrm{ as }n\rightarrow\infty,\,m\textrm{ fixed}.\label{eq:nInfRvec}
\end{align}

\end{restatable}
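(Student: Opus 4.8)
The plan is to reduce the first five limits to Theorem \ref{thm:limitZ} through a single exact mixture identity, and to treat the remaining limit---in which $n$ itself diverges---by a direct law-of-large-numbers argument. For the identity, substitute $\Pr(R_0=j_0)=\binom{n}{j_0-1}\,\mathbb{E}\big[Z_{\rho,\delta}^{\,j_0-1}(1-Z_{\rho,\delta})^{\,n+1-j_0}\big]$, which is (\ref{eq:integralPR0}), into the conditional-uniformity identity (\ref{eq:withR0}) of Proposition \ref{prop:otherRanks}; this shows that, for every finite $n$ and every $\mathbf{v}=(v_0,v_1,\ldots,v_m)\in\mathcal{S}_{n+1,m+1}$,
\[
\Pr\left(\mathbf{R}_{0,\mathbf{i}}=\mathbf{v}\right)=\mathbb{E}\left[\,h_{\mathbf{v}}\!\left(Z_{\rho,\delta}\right)\right],\qquad h_{\mathbf{v}}(z)\coloneqq\frac{\binom{n}{v_0-1}\,z^{\,v_0-1}(1-z)^{\,n+1-v_0}}{\prod_{i=1}^{m}(n-i+1)},
\]
so that $\mathbf{R}_{0,\mathbf{i}}$ has the law of the $\mathscr{L}(Z_{\rho,\delta})$-mixture of the family $\{\mathbf{V}_z:0<z<1\}$ from the theorem statement, since $h_{\mathbf{v}}(z)=\Pr(\mathbf{V}_z=\mathbf{v})$ there. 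Because each $h_{\mathbf{v}}$ is a polynomial---hence bounded and continuous on $[0,1]$---and the set $\mathcal{S}_{n+1,m+1}$ is fixed once $n$ is, any weak limit $Z_{\rho,\delta}\implies Z^{\infty}$ (in particular any in-probability limit) forces $\Pr(\mathbf{R}_{0,\mathbf{i}}=\mathbf{v})\to\mathbb{E}[h_{\mathbf{v}}(Z^{\infty})]$ for each $\mathbf{v}$.

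The first five displays then follow by inserting the limits of $\mathscr{L}(Z_{\rho,\delta})$ supplied by Theorem \ref{thm:limitZ}. In the regimes $\delta\to-\infty$, $\delta\to\infty$, $\rho\to\infty$, and ($\rho,|\delta|\to\infty$ with $\delta/\rho=r$ fixed), $Z_{\rho,\delta}\longrightarrow c$ with $c\in\{1,\,0,\,\tfrac{1}{2},\,\Phi(-r)\}$, so the limiting mass function is $\mathbf{v}\mapsto h_{\mathbf{v}}(c)$: for $c=\tfrac{1}{2}$ and $c=\Phi(-r)$ this is $\mathbf{V}_{1/2}$ and $\mathbf{V}_{\Phi(-r)}$, giving (\ref{eq:rhoInfRvec}) and (\ref{eq:rhoAbsDeltaInfRvec}); for $c=1$, $h_{\mathbf{v}}(1)\ne0$ only when $v_0=n+1$, so the limit places $R_0=n+1$ with the remaining coordinates uniform over distinct $m$-tuples of $[n+1]\setminus\{n+1\}=[n]$, i.e.\ $(n+1,\mathbf{U}_{n,m})$ as in (\ref{eq:deltaNegInfRvec}); for $c=0$, $h_{\mathbf{v}}(0)\ne0$ only when $v_0=1$, giving $(1,\mathbf{1}_m+\mathbf{U}_{n,m})$ as in (\ref{eq:deltaInfRvec}). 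As $\rho\to0^{+}$, Theorem \ref{thm:limitZ} gives $Z_{\rho,\delta}\implies\xi\sim\mathrm{Bernoulli}(\Phi(-\delta))$, so $\mathbb{E}[h_{\mathbf{v}}(\xi)]=\Phi(\delta)\,h_{\mathbf{v}}(0)+\Phi(-\delta)\,h_{\mathbf{v}}(1)$; this is supported on $v_0\in\{1,n+1\}$, putting $R_0=1$ with probability $\Phi(\delta)$ and $R_0=n+1$ with probability $\Phi(-\delta)$ and leaving the other coordinates uniform over distinct $m$-tuples of the complement---precisely $(1+n\xi,\ \mathbf{1}_m-\boldsymbol{\xi}_m+\mathbf{U}_{n,m})$ with $\xi$ independent of $\mathbf{U}_{n,m}$, which is (\ref{eq:rhoZeroRvec}).

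For (\ref{eq:nInfRvec}) the mixture identity no longer closes the argument, because $\mathcal{S}_{n+1,m+1}$ grows with $n$; here I would argue directly from the definitions. Write $R_0-1=\sum_{j=1}^{n}\mathbf{1}_{\{X_j\le X_0\}}$ and, for $1\le\ell\le m$,
\[
R_{i_\ell}-1=\mathbf{1}_{\{X_0\le X_{i_\ell}\}}+\sum_{\ell'\ne\ell}\mathbf{1}_{\{X_{i_{\ell'}}\le X_{i_\ell}\}}+\sum_{j\in[n]\setminus\{i_1,\ldots,i_m\}}\mathbf{1}_{\{X_j\le X_{i_\ell}\}},
\]
in which the first two sums are $O(1)$ and so vanish after dividing by $n$. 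Conditioning on $(X_0,X_{i_1},\ldots,X_{i_m})$ leaves i.i.d.\ $\mathcal{N}(\mu,\sigma^2)$ variables to which the strong law applies, so (by Fubini to remove the conditioning) one gets, jointly and almost surely, $\tfrac{1}{n}(R_0-1)\to\Phi\big((X_0-\mu)/\sigma\big)=Z_{\rho,\delta}$ and $\tfrac{1}{n}(R_{i_\ell}-1)\to\Phi\big((X_{i_\ell}-\mu)/\sigma\big)$. Since $X_{i_1},\ldots,X_{i_m}$ are i.i.d.\ $\mathcal{N}(\mu,\sigma^2)$ and independent of $X_0$, the limits $\Phi\big((X_{i_\ell}-\mu)/\sigma\big)$ are i.i.d.\ $\mathrm{Uniform}(0,1)$ and independent of $Z_{\rho,\delta}$; hence $\tfrac{1}{n}(\mathbf{R}_{0,\mathbf{i}}-\mathbf{1}_{m+1})$ converges almost surely, a fortiori in distribution, to $(Z_{\rho,\delta},\boldsymbol{\varUpsilon}_m)$, which is (\ref{eq:nInfRvec}). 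I expect this last passage to be the only step needing genuine care: one must confirm that the $O(1)$ boundary terms are uniformly negligible and that the strong law yields \emph{joint} convergence together with the stated independence of the limiting uniforms from $Z_{\rho,\delta}$. (A combinatorial alternative bounds the total-variation distance between a uniform distinct $m$-tuple of $[n+1]\setminus\{v_0\}$ and $m$ i.i.d.\ uniform draws from $[n+1]$ by $O(m^2/n)$, uniformly in $v_0$, delivering the same asymptotic independence from $(R_0-1)/n\longrightarrow Z_{\rho,\delta}$.) The first five displays, by contrast, are essentially immediate once the mixture identity and Theorem \ref{thm:limitZ} are in hand.
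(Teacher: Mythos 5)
Your proof is correct, but it follows a genuinely different route from the paper's. For the first five limits you exploit the exact mixture identity $\Pr\left(\mathbf{R}_{0,\mathbf{i}}=\mathbf{v}\right)=\mathbb{E}\left[\Pr\left(\mathbf{V}_{z}=\mathbf{v}\right)\big|_{z=Z_{\rho,\delta}}\right]$, obtained by combining (\ref{eq:integralPR0}) with (\ref{eq:withR0}), and then push weak limits of $Z_{\rho,\delta}$ (Theorem \ref{thm:limitZ}) through the bounded continuous polynomial test functions $h_{\mathbf{v}}$. This is a clean unification: it handles all five regimes at once and entirely bypasses the beta-binomial machinery. The paper instead treats the regimes separately---Chebyshev plus Slutsky for $\delta\rightarrow\pm\infty$, and, for the $\rho$ limits, a detour through the surrogate law via Theorem \ref{thm:RONO}, Proposition \ref{prop:limitab}, and Proposition \ref{prop:betaBinomLim} (limits of the beta-binomial itself), before feeding the marginal limit of $R_{0}$ back into Proposition \ref{prop:otherRanks}. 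Your route is more direct and arguably preferable here, since it proves exact limits without invoking an approximation whose error must separately be shown to vanish; the paper's route has the side benefit of exercising the beta-binomial limit theory it develops anyway. For (\ref{eq:nInfRvec}) you give a conditional-SLLN/Glivenko--Cantelli argument yielding joint almost-sure convergence of $\nicefrac{1}{n}\left(\mathbf{R}_{0,\mathbf{i}}-\mathbf{1}_{m+1}\right)$ to $\left(Z_{\rho,\delta},\boldsymbol{\varUpsilon}_{m}\right)$, with independence of the limiting uniforms from $Z_{\rho,\delta}$ inherited from the independence of $X_{i_{1}},\ldots,X_{i_{m}}$ and $X_{0}$; the $\mathcal{O}\left(1\right)$ boundary terms are bounded by $m$ and vanish after division by $n$, and the Fubini step you flag is the standard one and goes through. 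The paper instead proves $\nicefrac{1}{n}\left(R_{0}-1\right)\implies Z_{\rho,\delta}$ by the method of moments (using boundedness of the support) and establishes the joint limit and asymptotic independence by counting tuples via Lemma \ref{lem:cardCrossProd}---essentially the combinatorial alternative you sketch in your closing parenthesis. Both arguments are complete; yours trades the moment computation and the counting lemma for a strong-law argument on a common probability space.
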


We conclude with a few supplementary observations:
\begin{itemize}
\item Appendix \ref{sec:Derivations-for-=0000A73.3} refines Equations (\ref{eq:deltaNegInfRvec})
and (\ref{eq:deltaInfRvec}), showing that $R_{0}$ converges to the
stated limits in probability.
\item The same appendix demonstrates that, when $\sigma_{0}\ll\sigma$,
the indicators $\mathbf{1}_{\left\{ X_{i}\leq X_{0}\right\} }$ become
i.i.d., even though each depends on $X_{0}$. They follow $\mathrm{Bernoulli}\left(\nicefrac{1}{2}\right)$
and $\mathrm{Bernoulli}\left(\Phi\left(-r\right)\right)$ in settings
(\ref{eq:rhoInfRvec}) and (\ref{eq:rhoAbsDeltaInfRvec}).
\item Applying the normalization from Equation (\ref{eq:nInfRvec}) to both
sides of Equations (\ref{eq:deltaNegInfRvec})--(\ref{eq:rhoAbsDeltaInfRvec})
and then letting $n\rightarrow\infty$ recovers the limits (\ref{eq:Zto1})--(\ref{eq:ZtoBern})
in Theorem \ref{thm:limitZ}. Here, the limits in $\rho$ and $\delta$
commute with the limit in $n$.
\end{itemize}

\section{Applications \label{sec:Applications}}

Building on our earlier results, we explore two applications. In Section
\ref{subsec:Benchmarking-Approximations}, we benchmark our $\Pr(\mathbf{R}=\mathbf{r})$
approximation against existing single-outlier results. In Section
\ref{subsec:The-Median}, we then reexamine the minimum, median, and
maximum under the one-outlier assumption.

\subsection{Benchmarking Approximations \label{subsec:Benchmarking-Approximations}}

\begin{figure}[!t]
\hfill{}\includegraphics[scale=0.49]{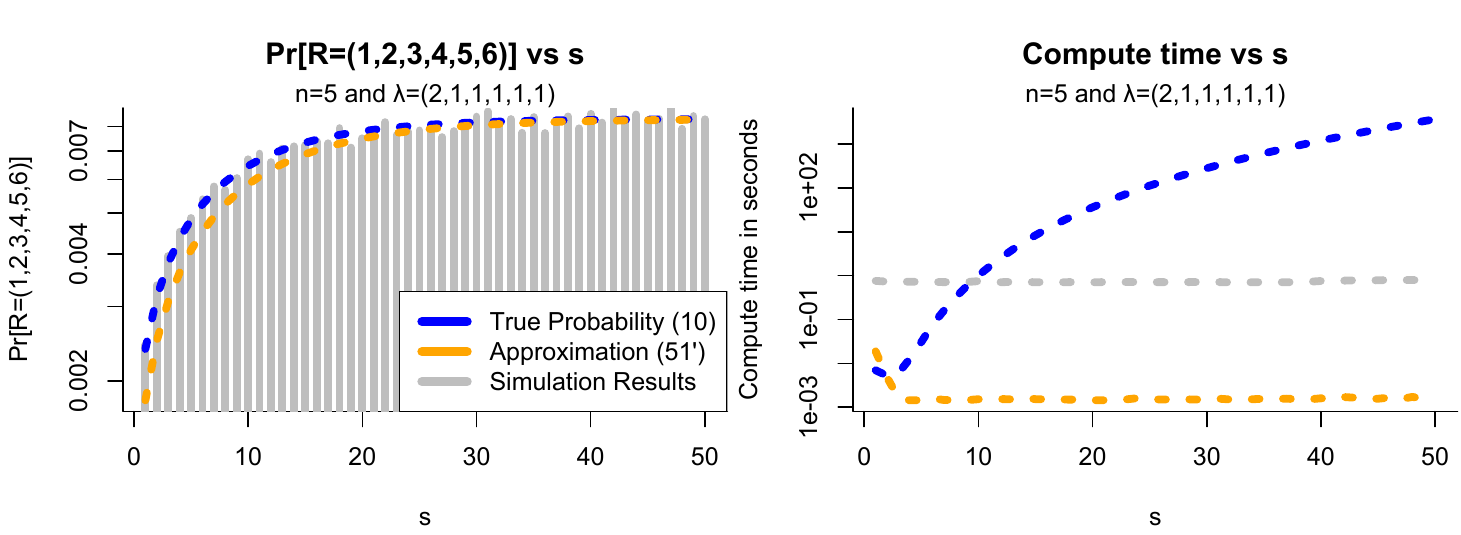}\hfill{}

\caption{Probability $\Pr\left(\mathbf{R}=\left(1,2,3,4,5,6\right)\right)$
(left) and computation time in seconds (right) vs $1\protect\leq s\protect\leq50$
for independent $X_{i}\sim\mathrm{Gamma}\left(s,\lambda_{i}\right)$
($0\protect\leq i\protect\leq5$) with $\boldsymbol{\lambda}=\left(2,1,1,1,1,1\right)$.
Probabilities are computed using the exact formula (\ref{eq:GammaRankProb}),
the beta-binomial approximation (\ref{eq:approxWithR0}), and Monte
Carlo simulation with $10^{5}$ samples per $s$. Both vertical axes
use a logarithmic scale.}

\label{fig:GammaRankProb}
\end{figure}

In settings with a single outlier, we benchmark beta-binomial approximations
of $\Pr\left(\mathbf{R}=\mathbf{r}\right)$ against the exact gamma-based
formula. Let $X_{i}\sim\mathrm{Gamma}\left(s,\lambda_{i}\right)$
be independent for $0\leq i\leq n$, with $s=1,2,\ldots$ and parameters
satisfying $\lambda_{0}\neq\lambda_{1}=\lambda_{2}=\cdots=\lambda_{n}$.
Although Equation (\ref{eq:GammaRankProb}) provides an exact expression
for $\Pr\left(\mathbf{R}=\mathbf{r}\right)$, its $\mathcal{O}\left(s^{n}\right)$
summands make it hard to compute for moderate $s$ and $n$ (\citet{S90}).
As $s$ increases, each $\mathrm{Gamma}\left(s,\lambda_{i}\right)$
converges to $\mathcal{N}\left(\nicefrac{s}{\lambda_{i}},\nicefrac{s}{\lambda_{i}^{2}}\right)$,
so we expect the beta-binomial approximation to improve. Figure \ref{fig:GammaRankProb}
(with $n=5$, $\boldsymbol{\lambda}=\left(2,1,1,1,1,1\right)$, $\mathbf{r}=\left(1,2,3,4,5,6\right)$,
and $1\leq s\leq50$) shows our approximation approaching the true
probability while the exact method\textquoteright s compute times
diverge. Hence, the value proposition of our approach grows with $s$.

At $s=50$, we also observe that the average runtime of our approximation
actually decreases slightly as $n$ increases. Define $\bar{t}_{n}$
as the mean compute time over 1000 runs with $n$ in-group normals
($1\leq n\leq1000$). A linear fit yields 
\[
\bar{t}_{n}\approx5.615\times10^{-3}-3.473\times10^{-8}n\textrm{ seconds,}
\]
implying about a 0.6\% runtime reduction across $n=1,2,\ldots,1000$
($p\approx10^{-9}$). We attribute this negative slope to R\textquoteright s
growing efficiency in evaluating the beta function at larger argument
values (\citet{R23}).

\begin{figure}[!t]
\hfill{}\includegraphics[scale=0.49]{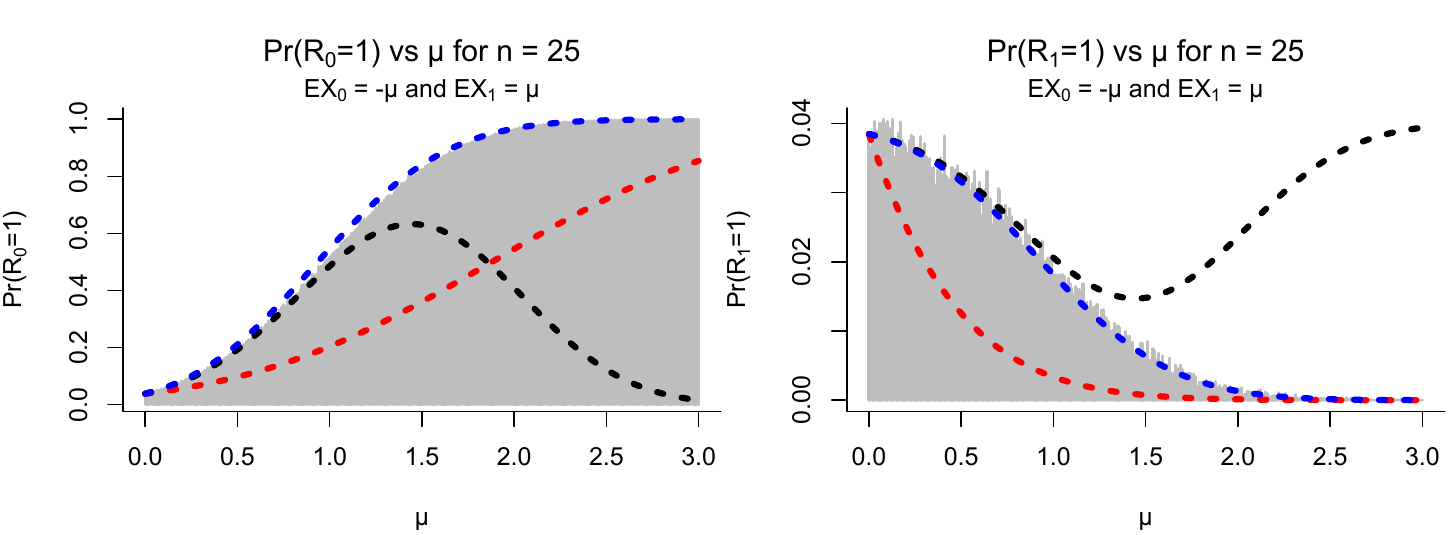}\hfill{}

\caption{Probabilities $\Pr\left(R_{0}=1\right)$ (left) and $\Pr\left(R_{1}=1\right)$
(right) as functions of $\mu$ under the model with independent $X_{0}\sim\mathcal{N}\left(-\mu,1\right)$
and $X_{i}\sim\mathcal{N}\left(\mu,1\right)$ for $1\protect\leq i\protect\leq25$.
Gray bars show simulation-based proportions using $10^{5}$ draws
at each $\mu\in\left\{ 0,0.01,\ldots,3\right\} $. Blue curves trace
the beta-binomial approximations from (\ref{eq:approxPR0}) and (\ref{eq:approxNoR0}).
Red curves depict the Taylor-series approximation in (\ref{eq:H81Ri1}).
Black curves represent the numerical-integration estimates of (\ref{eq:integralPR0})
and (\ref{eq:noR0}) using the binning approach of (\ref{eq:approxVarIntegral})
with $\epsilon=10^{-4}$.}

\label{fig:NormalRankProb}
\end{figure}

Returning to the normal setting with a single outlier, we compare
our beta-binomial approximations for $\Pr\left(R_{i}=1\right)$ against
the Taylor series approximation in (\ref{eq:H81Ri1}). We draw independent
$X_{0}\sim\mathcal{N}\left(-\mu,1\right)$ and $X_{i}\sim\mathcal{N}\left(\mu,1\right)$
for $1\leq i\leq25$ with $\mu$ running from 0 to 3 in steps of 0.01.
For each $\mu$, we estimate $\Pr\left(R_{0}=1\right)$ and $\Pr\left(R_{1}=1\right)$
by four methods:
\begin{enumerate}
\item Simulation (gray bars): We simulate $10^{5}$ vectors $\mathbf{X}\in\mathbb{R}^{26}$
for each $\mu$.
\item Beta-binomial (blue curves): (\ref{eq:approxPR0}) and (\ref{eq:approxNoR0})
track the simulated proportions closely but slightly overestimate
$\Pr\left(R_{0}=1\right)$ when $\nicefrac{3}{4}\leq\mu\leq\nicefrac{3}{2}$.
\item Numerical integration (black curves): The accuracy of (\ref{eq:approxVarIntegral})
with $\epsilon=10^{-4}$ applied to (\ref{eq:integralPR0}) and (\ref{eq:noR0})
deteriorates once $\mu\geq1$ ($\left|\delta\right|\geq2$; see Figures
\ref{fig:R0hist}--\ref{fig:R1hist}).
\item Taylor series (red curves): As expected, (\ref{eq:H81Ri1}) is serviceable
around $\mu\approx0$; unexpectedly, it outperforms numerical integration
at larger $\mu$.
\end{enumerate}
Across the entire $\mu$ range, the beta-binomial method matches simulation
results. Furthermore, as $\mu$ increases, it decisively outperforms
both the Taylor-series and numerical-integration approaches (Figure
\ref{fig:NormalRankProb}).

\subsection{The Minimum, Median, and Maximum \label{subsec:The-Median}}

\begin{figure}[!t]
\subfloat[$\Pr\left(R_{0}=m+1\right)$ versus $n+1=2m+1$.]{\hfill{}\includegraphics[scale=0.48]{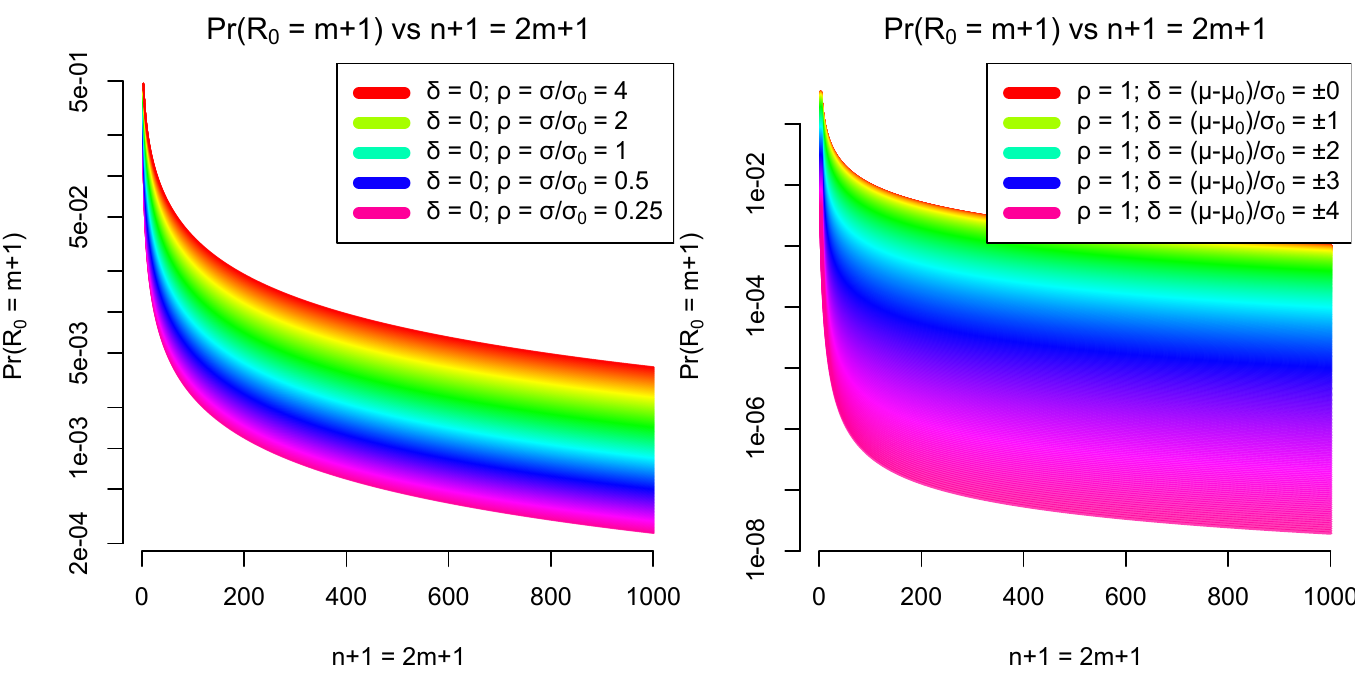}\hfill{}

\label{sub-fig:ProbR0med}}

\subfloat[$\Pr\left(R_{0}=n+1\right)$ versus $n+1$.]{\hfill{}\includegraphics[scale=0.48]{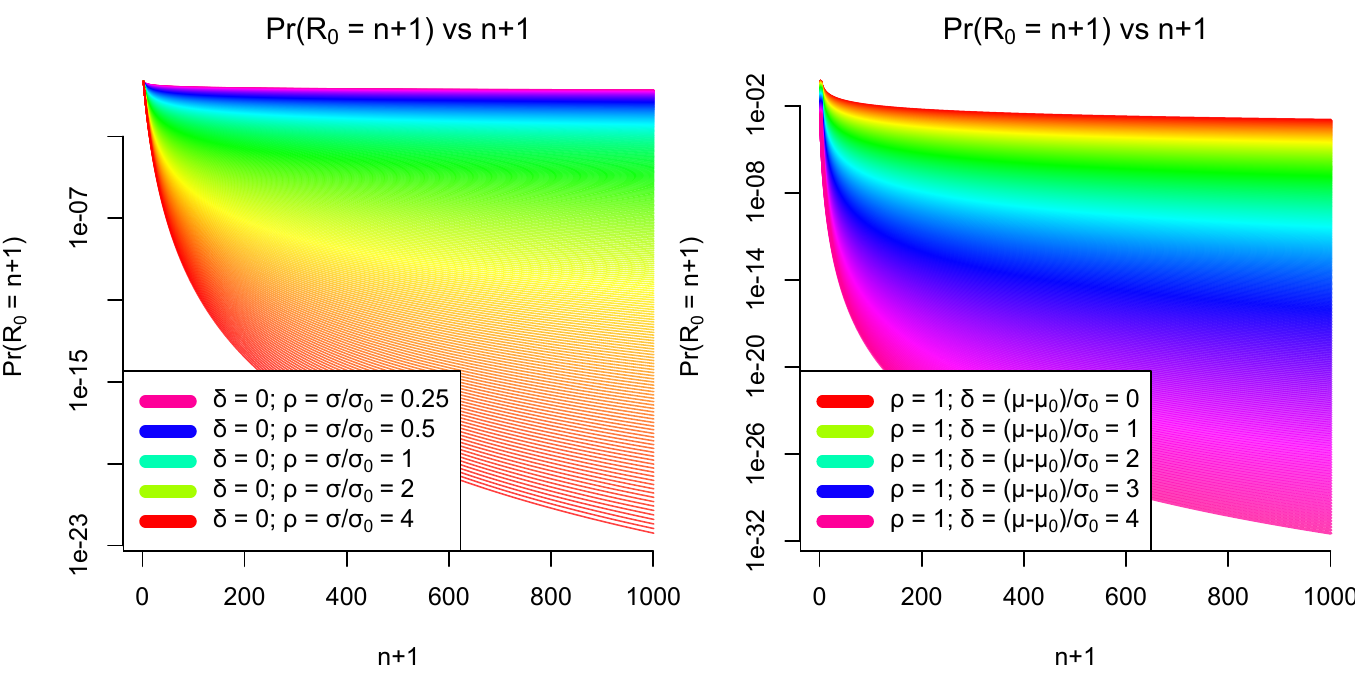}\hfill{}

\label{sub-fig:ProbR0max}}

\caption{Approximated probabilities $\Pr\left(R_{0}=m+1\right)$ and $\Pr\left(R_{0}=n+1\right)$
as functions of the parameters $\rho$, $\delta$, and the sample
size $n+1$. Panel \ref{sub-fig:ProbR0med} employs the central expression
from (\ref{eq:probR0med}) and an odd sample size while panel \ref{sub-fig:ProbR0max}
uses the central expression from (\ref{eq:probR0max}). The left panels
hold $\delta=0$ constant and vary $\rho$ over $\left\{ 2^{-2},2^{-1.99},\ldots,2^{2}\right\} $.
In contrast, the right panels fix $\rho=1$ and vary $\left|\delta\right|$
or $\delta$ over $\left\{ 0,0.01,\ldots,4\right\} $. Colors denote
distinct parameter combinations while the $y$-axes are displayed
on a logarithmic scale. Note that $\lim_{\delta\to-\infty}\Pr\left(R_{0}=n+1\right)=1$.}

\label{fig:probR0}
\end{figure}

Given their significance as measures of centrality and extremity,
we focus on the minimum, median, and maximum values. From (\ref{eq:approxPR0})
the probability that the differently-distributed normal occupies the
minimum, median, or maximum position is approximated by
\begin{align}
\Pr\left(R_{0}=1\right) & \approx\frac{B\left(a_{\rho,\delta},b_{\rho,\delta}+n\right)}{B\left(a_{\rho,\delta},b_{\rho,\delta}\right)}\stackrel[\infty]{n}{\sim}\frac{\Gamma\left(a_{\rho,\delta}+b_{\rho,\delta}\right)}{n^{b_{\rho,\delta}}\Gamma\left(a_{\rho,\delta}\right)},\label{eq:probR0min}\\
\Pr\left(R_{0}=m+1\right) & \approx\binom{2m}{m}\frac{B\left(a_{\rho,\delta}+m,b_{\rho,\delta}+m\right)}{B\left(a_{\rho,\delta},b_{\rho,\delta}\right)}\stackrel[\infty]{m}{\sim}\frac{2^{1-a_{\rho,\delta}-b_{\rho,\delta}}}{m\,B\left(a_{\rho,\delta},b_{\rho,\delta}\right)},\label{eq:probR0med}\\
\Pr\left(R_{0}=n+1\right) & \approx\frac{B\left(a_{\rho,\delta}+n,b_{\rho,\delta}\right)}{B\left(a_{\rho,\delta},b_{\rho,\delta}\right)}\stackrel[\infty]{n}{\sim}\frac{\Gamma\left(a_{\rho,\delta}+b_{\rho,\delta}\right)}{n^{a_{\rho,\delta}}\Gamma\left(b_{\rho,\delta}\right)}.\label{eq:probR0max}
\end{align}
Here, $a_{\rho,\delta}$ and $b_{\rho,\delta}$ are defined in (\ref{eq:alpha3})
and (\ref{eq:beta3}) while (\ref{eq:probR0med}) presumes $n=2m$
for $m\geq1$. Consequently, as $n\to\infty$, we observe that $\Pr\left(R_{0}=1\right)=\mathcal{O}\left(\nicefrac{1}{n^{b_{\rho,\delta}}}\right)$,
$\Pr\left(R_{0}=m+1\right)=\mathcal{O}\left(\nicefrac{1}{n}\right)$,
and $\Pr\left(R_{0}=n+1\right)=\mathcal{O}\left(\nicefrac{1}{n^{a_{\rho,\delta}}}\right)$.
For instance, in the i.i.d.\ setting, with $\rho=1$ and $\delta=0$
and for $1\leq k\leq n+1$, we have: 
\[
\Pr\left(R_{0}=k\right)=\frac{1}{n+1}=\binom{n}{k-1}\frac{B\left(k,n+2-k\right)}{B\left(1,1\right)}.
\]

Figure \ref{fig:probR0} shows how $\Pr\left(R_{0}=m+1\right)$ and
$\Pr\left(R_{0}=n+1\right)$ vary with $\rho$ and $\delta$. As $\rho$
grows, so that $\mathrm{Var}\left(X_{1}\right)$ increases relative
to $\mathrm{Var}\left(X_{0}\right)$, $\Pr\left(R_{0}=m+1\right)$
rises while $\Pr\left(R_{0}=n+1\right)$ falls. In contrast, an increase
in $\left|\delta\right|$, so that $\mathscr{L}\left(X_{1}\right)$
is increasingly shifted relative to $\mathscr{L}\left(X_{0}\right)$,
sends $\Pr\left(R_{0}=m+1\right)$ towards zero. When $\delta$ itself
increases, so that $\mathbb{E}X_{1}$ grows relative to $\mathbb{E}X_{0}$,
$\Pr\left(R_{0}=n+1\right)$ decreases. These results are consistent
with our expectations.

\section{Discussion and Conclusions \label{sec:Discussion-and-Conclusions}}

We have investigated the rank of a single outlier among a total of
$n+1$ independent Gaussian observations and shown that
\[
R_{0}\stackrel{.}{\sim}1+\mathrm{BetaBinomial}\left(n,a_{\rho,\delta},b_{\rho,\delta}\right).
\]
The derivation of this approximation rests on two key observations.
First, conditional on the outlier\textquoteright s value, $\left(R_{0}\mid X_{0}\right)-1\sim\mathrm{Binomial}\left(n,\;\Phi\left(\nicefrac{\left(X_{0}-\mu\right)}{\sigma}\right)\right)$.
Second, the distribution of $\Phi\left(\nicefrac{\left(X_{0}-\mu\right)}{\sigma}\right)$
can be closely approximated by $\mathrm{Beta}\left(a_{\rho,\delta},b_{\rho,\delta}\right)$
by matching means and variances. Furthermore, conditional on $R_{0}=k$,
the remaining ranks are uniformly distributed over the permutations
of $\left[n+1\right]\setminus\left\{ k\right\} $. These results extend
classical normal-rank theory to the simplest non-i.i.d.\ setting,
yielding formulas that are both computationally efficient and conceptually
transparent.

Our conjugacy-based framework streamlines marginalization and yields
an explicit rank distribution under non-i.i.d.\ sampling. Theorem
\ref{thm:RankAsymp}, together with Hoeffding\textquoteright s inequality
for $\epsilon>0$, then characterizes $R_{0}$'s asymptotic behavior:
\begin{itemize}
\item Mean shift: $R_{0}\longrightarrow n+1$ in probability as $\mu_{0}-\mu\to\infty$,
and $R_{0}\longrightarrow1$ in probability as $\mu-\mu_{0}\to\infty$; 
\item Variance inflation: $\left|R_{0}-\left(\nicefrac{n}{2}+1\right)\right|\longrightarrow\nicefrac{n}{2}$
in probability as $\nicefrac{\sigma_{0}}{\sigma}\to\infty$, and $\lim_{\nicefrac{\sigma}{\sigma_{0}}\to\infty}\Pr\left(\left|R_{0}-\left(\nicefrac{n}{2}+1\right)\right|\geq\epsilon n\right)\leq2\exp\left(-2\epsilon^{2}n\right)$.
\end{itemize}
\par\indent While our beta-binomial approximation yields an efficient
closed-form one-outlier solution, it also suggests several promising
extensions. Generalizing to heterogeneous utilities $\mathbf{X}\sim\mathcal{N}_{n}\left(\boldsymbol{\mu},\:\mathrm{diag}\left(\boldsymbol{\sigma}\right)\right)$
and to correlated utilities $\mathbf{X}\sim\mathcal{N}_{n}\left(\boldsymbol{\mu},\:\boldsymbol{\Sigma}\right)$
would realize Thurstone\textquoteright s original vision. Deriving
non-asymptotic bounds on $\left|\Pr\left(R_{0}=k\right)-p_{\rho,\delta}\left(k\right)\right|$
would equip practitioners with concrete error guarantees (\emph{cf.}\ Theorem
\ref{thm:RONO}). Finally, applying our formulae to rank-based procedures---such
as the Wilcoxon signed-rank test---could systematically assess their
robustness to outliers. We welcome collaborations to advance these
research directions.

We assess the feasibility of deriving exact rank formulas under various
utility distributions. Equations (\ref{eq:ExpRankProb})--(\ref{eq:GammaRankProb})
exploit the exponential distribution\textquoteright s memoryless property
to yield closed-form probabilities for independent $\mathrm{Exp}\left(\lambda_{i}\right)$,
$\mathrm{Gumbel}\left(\mu_{i},\sigma_{i}\right)$, and $\mathrm{Gamma}\left(s,\lambda_{i}\right)$
variates. The Gaussian single-outlier setting offers no such shortcut,
and we therefore rely on approximation techniques and Bayesian conjugacy.
Absent memorylessness, exact closed-form results remain elusive, making
high-fidelity approximations---like those developed here---the only
feasible alternative.

\section*{Acknowledgements}

I am especially grateful to Richard Olshen (May 17, 1942 -- November
8, 2023) for his guidance on the publication process, and to Brad
Efron and Amir Dembo for their insights, which significantly sharpened
this paper\textquoteright s presentation.

\appendix

\section{Proofs of Theorems \ref{thm:theGenMean} and \ref{thm:theGenVar}
\label{sec:Derivations-for-=0000A72.1}}

\genMean*
\begin{proof}
Using (\ref{eq:density}) and independent $X_{0}\sim\mathcal{N}\left(0,1\right)$
and $X_{1}\sim\mathcal{N}\left(\delta,\rho^{2}\right)$ (see footnote
\ref{fn:one-case}), we first note that 
\begin{align}
\mathbb{E}Z_{\rho,\delta} & =\int_{0}^{1}z\frac{\rho\phi\left(\delta+\rho\Phi^{-1}\left(z\right)\right)}{\phi\left(\Phi^{-1}\left(z\right)\right)}dz=\int_{-\infty}^{\infty}\Phi\left(\frac{y-\delta}{\rho}\right)\phi\left(y\right)dy\\
 & =\mathbb{E}\left[\Pr\left(\left.X_{1}\leq X_{0}\right|X_{0}\right)\right]=\Pr\left(X_{1}\leq X_{0}\right).
\end{align}
In what follows let $\mathcal{R}\coloneqq\left\{ \left(x,y\right)^{\mathsf{T}}\in\mathbb{R}^{2}:x>y\right\} $.
Note that
\begin{align}
\mathbb{E}Z_{\rho,\delta} & =\int_{0}^{1}\left(1-\Phi\left(\delta+\rho\Phi^{-1}\left(z\right)\right)\right)dz\\
 & =1-\int_{-\infty}^{\infty}\Phi\left(x\right)\frac{1}{\rho}\phi\left(\frac{x-\delta}{\rho}\right)dx\\
 & =1-\int_{-\infty}^{\infty}\int_{-\infty}^{x}f\left(x\right)\phi\left(y\right)dy\,dx,
\end{align}
where $f\left(x\right)\coloneqq\frac{1}{\rho}\phi\left(\frac{x-\delta}{\rho}\right)$
is the density function for $\mathcal{N}\left(\delta,\rho^{2}\right)$.
This implies that 
\begin{equation}
\mathbb{E}Z_{\rho,\delta}=1-\Pr\left(\mathbf{X}\in\mathcal{R}\right)=\Pr\left(\mathbf{X}\in\mathcal{R}^{\mathsf{c}}\right),\label{eq:normalProb}
\end{equation}
where $\mathcal{R}^{\mathsf{c}}\coloneqq\left\{ \left(x,y\right)^{\mathsf{T}}\in\mathbb{R}^{2}:x\leq y\right\} $
and 
\begin{equation}
\mathbf{X}\sim\mathcal{N}_{2}\left(\left(\begin{array}{c}
\delta\\
0
\end{array}\right),\left(\begin{array}{cc}
\rho^{2} & 0\\
0 & 1
\end{array}\right)\right).
\end{equation}
Transforming $\mathbb{R}^{2}$ makes probability (\ref{eq:normalProb})
easier to calculate. To that end, while
\begin{equation}
\mathbf{R}\coloneqq\left[\begin{array}{rr}
\cos\left(\frac{\pi}{4}\right) & -\sin\left(\frac{\pi}{4}\right)\\
\sin\left(\frac{\pi}{4}\right) & \cos\left(\frac{\pi}{4}\right)
\end{array}\right]=\frac{1}{\sqrt{2}}\left[\begin{array}{rr}
1 & -1\\
1 & 1
\end{array}\right]
\end{equation}
rotates $\mathbb{R}^{2}$ counterclockwise by $\frac{\pi}{4}\textrm{ radians}=45^{\circ}$,
we have
\begin{align}
\mathbf{R}\mathcal{R}^{c} & \coloneqq\left\{ \mathbf{R}\mathbf{x}:\mathbf{x}\in\mathcal{R}^{c}\right\} =\left\{ \left(x,y\right)^{\mathsf{T}}\in\mathbb{R}^{2}:x\leq0\right\} ,\label{eq:rotatedSet}\\
\mathbf{Y} & \coloneqq\mathbf{R}\mathbf{X}\sim\mathcal{N}_{2}\left(\frac{\delta}{\sqrt{2}}\left(\begin{array}{c}
1\\
1
\end{array}\right),\frac{1}{2}\left(\begin{array}{cc}
\rho^{2}+1 & \rho^{2}-1\\
\rho^{2}-1 & \rho^{2}+1
\end{array}\right)\right).\label{eq:BivNormMean}
\end{align}
Equations (\ref{eq:rotatedSet}) and (\ref{eq:BivNormMean}) then
imply that
\begin{align}
\mathbb{E}Z_{\rho,\delta} & =\Pr\left(\mathbf{X}\in\mathcal{R}^{\mathsf{c}}\right)=\Pr\left(\mathbf{R}\mathbf{X}\in\mathbf{R}\mathcal{R}^{\mathsf{c}}\right)\label{eq:repeatNormProb}\\
 & =\Pr\left(Y_{1}\leq0\right)=\Phi\left(\nicefrac{-\delta}{\sqrt{1+\rho^{2}}}\right),\label{eq:probY1}
\end{align}
where (\ref{eq:repeatNormProb}) uses (\ref{eq:normalProb}) and (\ref{eq:probY1})
uses the marginal $Y_{1}\sim\mathcal{N}\left(\frac{\delta}{\sqrt{2}},\frac{1+\rho^{2}}{2}\right)$.
\end{proof}
\genVar*
\begin{proof}
In what follows let $\mathcal{R}\coloneqq\left\{ \left(x,y,z\right)^{\mathsf{T}}\in\mathbb{R}^{3}:\max\left(x,y\right)\leq z\right\} $.
By Theorem \ref{thm:theGenMean} it suffices to show that
\begin{equation}
\mathbb{E}Z_{\rho,\delta}^{2}=\int_{\nicefrac{11\pi}{12}}^{\nicefrac{19\pi}{12}}G_{\rho,\delta}\left(\theta\right)d\theta+\frac{\cos^{-1}\left(\nicefrac{-1}{\left(\rho^{2}+1\right)}\right)}{2\pi\exp\left(\nicefrac{\delta^{2}}{\left(\rho^{2}+2\right)}\right)}.\label{eq:deriveVPD-1}
\end{equation}
To that end, with independent $X_{0}\sim\mathcal{N}\left(0,1\right)$,
$X_{1},X_{2}\sim\mathcal{N}\left(\delta,\rho^{2}\right)$ (see footnote
\ref{fn:one-case}), note first that, 
\begin{align}
\mathbb{E}Z_{\rho,\delta}^{2} & =\rho\int_{0}^{1}y^{2}\frac{\phi\left(\delta+\rho\Phi^{-1}\left(y\right)\right)}{\phi\left(\Phi^{-1}\left(y\right)\right)}dy=\int_{-\infty}^{\infty}\Phi\left(\frac{z-\delta}{\rho}\right)^{2}\phi\left(z\right)dz\\
 & =\mathbb{E}\left[\Pr\left(\left.X_{1}\leq X_{0},X_{2}\leq X_{0}\right|X_{0}\right)\right]=\Pr\left(X_{1}\leq X_{0},X_{2}\leq X_{0}\right)\label{eq:deriveVPD-2}\\
 & =\int_{-\infty}^{\infty}\int_{-\infty}^{z}\int_{-\infty}^{z}f\left(x\right)f\left(y\right)\phi\left(z\right)dx\,dy\,dz,
\end{align}
where $f\left(x\right)\coloneqq\frac{1}{\rho}\phi\left(\frac{x-\delta}{\rho}\right)$
is the density function for $\mathcal{N}\left(\delta,\rho^{2}\right)$.
This implies that
\begin{equation}
\mathbb{E}Z_{\rho,\delta}^{2}=\Pr\left(\mathbf{X}\in\mathcal{R}\right),\label{eq:prob3space}
\end{equation}
where 
\begin{equation}
\mathbf{X}\sim\mathcal{N}_{3}\left(\left[\begin{array}{c}
\delta\\
\delta\\
0
\end{array}\right],\left[\begin{array}{ccc}
\rho^{2} & 0 & 0\\
0 & \rho^{2} & 0\\
0 & 0 & 1
\end{array}\right]\right).
\end{equation}
Transforming $\mathbb{R}^{3}$ makes probability (\ref{eq:prob3space})
easier to calculate. Namely, we rotate the space so that the spine
$\alpha\left(1,1,1\right)^{\mathsf{T}}$, $\alpha\in\mathbb{R}$,
of wedge $\mathcal{R}$ is vertical, thereby shrinking the problem
from three dimensions to two. Letting $\mathbf{s}\coloneqq\left(1,1,1\right)^{\mathsf{T}}$
and $\mathbf{v}\coloneqq\left(0,0,1\right)^{\mathsf{T}}$, we rotate
$\mathbb{R}^{3}$ by 
\begin{equation}
\cos^{-1}\left(\frac{\mathbf{s}^{\mathsf{T}}\mathbf{v}}{\left\Vert \mathbf{s}\right\Vert \left\Vert \mathbf{v}\right\Vert }\right)=\cos^{-1}\left(\frac{1}{\sqrt{3}}\right)\textrm{ radians}
\end{equation}
about unit axis $\mathbf{u}\coloneqq\left(\nicefrac{1}{\sqrt{2}},\nicefrac{-1}{\sqrt{2}},0\right)^{\mathsf{T}}$
using rotation matrix 
\begin{equation}
\mathbf{R}\coloneqq\frac{1}{6}\left[\begin{array}{rrr}
\sqrt{3}+3 & \sqrt{3}-3 & -2\sqrt{3}\\
\sqrt{3}-3 & \sqrt{3}+3 & -2\sqrt{3}\\
2\sqrt{3} & 2\sqrt{3} & 2\sqrt{3}
\end{array}\right]
\end{equation}
(see Equation 9.63 in \citet{C15}). We then have
\begin{equation}
\mathbf{Y}\coloneqq\mathbf{R}\mathbf{X}\sim\mathcal{N}_{3}\left(\frac{1}{\sqrt{3}}\left[\begin{array}{r}
\delta\\
\delta\\
2\delta
\end{array}\right],\frac{1}{3}\left[\begin{array}{rrr}
1+2\rho^{2} & 1-\rho^{2} & -1+\rho^{2}\\
1-\rho^{2} & 1+2\rho^{2} & -1+\rho^{2}\\
-1+\rho^{2} & -1+\rho^{2} & 1+2\rho^{2}
\end{array}\right]\right).
\end{equation}
Furthermore, with 
\begin{equation}
\left[\begin{array}{c}
a\\
b\\
c
\end{array}\right]\coloneqq\mathbf{R}\left[\begin{array}{r}
1\\
0\\
0
\end{array}\right]=\frac{1}{6}\left[\begin{array}{r}
3+\sqrt{3}\\
-3+\sqrt{3}\\
2\sqrt{3}
\end{array}\right]\textrm{ and }\mathbf{R}\left[\begin{array}{r}
0\\
1\\
0
\end{array}\right]=\left[\begin{array}{c}
b\\
a\\
c
\end{array}\right],
\end{equation}
we note that
\begin{align}
\mathbf{R}\mathcal{R} & =\left\{ \left(x,y,z\right)^{\mathsf{T}}\in\mathbb{R}^{3}:y\leq\min\left(\frac{ax}{b},\frac{bx}{a}\right)\right\} \label{eq:xyzLimits}\\
 & =\left\{ \left(r,\theta,z\right)^{\mathsf{T}}\in\left[0,\infty\right)\times\left[0,2\pi\right)\times\mathbb{R}:\frac{11\pi}{12}\leq\theta\leq\frac{19\pi}{12}\right\} ,\label{eq:rthetazLimits}
\end{align}
where the product $\mathbf{R}\mathcal{R}$ is defined in (\ref{eq:rotatedSet})
and (\ref{eq:rthetazLimits}) uses polar coordinates for the $xy$-plane.
Now, (\ref{eq:xyzLimits}) implies we need only consider $\left(Y_{1},Y_{2}\right)^{\mathsf{T}}$
which has marginal distribution 
\begin{equation}
\left[\begin{array}{r}
Y_{1}\\
Y_{2}
\end{array}\right]\sim\mathcal{N}_{2}\left(\frac{1}{\sqrt{3}}\left[\begin{array}{c}
\delta\\
\delta
\end{array}\right],\frac{1}{3}\left[\begin{array}{rr}
1+2\rho^{2} & 1-\rho^{2}\\
1-\rho^{2} & 1+2\rho^{2}
\end{array}\right]\right)\eqqcolon\mathcal{N}_{2}\left(\boldsymbol{\mu},\boldsymbol{\Sigma}\right)
\end{equation}
and density function
\begin{equation}
g\left(\mathbf{y}\right)\coloneqq\frac{\exp\left(-\frac{1}{2}\left(\mathbf{y}-\boldsymbol{\mu}\right)^{\mathsf{T}}\boldsymbol{\Sigma}^{-1}\left(\mathbf{y}-\boldsymbol{\mu}\right)\right)}{2\pi\sqrt{\left|\boldsymbol{\Sigma}\right|}},
\end{equation}
where $\left|\boldsymbol{\Sigma}\right|=\frac{1}{3}\rho^{2}\left(\rho^{2}+2\right)$
is the determinant of $\boldsymbol{\Sigma}$ and $\mathbf{y}\in\mathbb{R}^{2}$.
Picking up from (\ref{eq:prob3space}), we have $\mathbb{E}Z_{\rho,\delta}^{2}=\Pr\left(\mathbf{X}\in\mathcal{R}\right)=\Pr\left(\mathbf{R}\mathbf{X}\in\mathbf{R}\mathcal{R}\right)$
\begin{align}
 & =\Pr\left(Y_{2}\leq\min\left(\frac{aY_{1}}{b},\frac{bY_{1}}{a}\right)\right)\\
 & =\underset{y_{2}\leq\min\left(\frac{ay_{1}}{b},\frac{by_{1}}{a}\right)}{\int\int}g\left(\mathbf{y}\right)dy_{1}dy_{2}\\
 & =\int_{\nicefrac{11\pi}{12}}^{\nicefrac{19\pi}{12}}\int_{0}^{\infty}g\left(r\cos\theta,r\sin\theta\right)r\,dr\,d\theta\\
 & =\frac{\sqrt{3}e^{-\frac{\delta^{2}}{\rho^{2}+2}}}{2\pi\rho\sqrt{\rho^{2}+2}}\int_{\nicefrac{11\pi}{12}}^{\nicefrac{19\pi}{12}}\int_{0}^{\infty}\exp\left(B_{\rho,\delta}\left(\theta\right)r-A_{\rho}\left(\theta\right)r^{2}\right)r\,dr\,d\theta\\
 & =\frac{\sqrt{3}e^{-\frac{\delta^{2}}{\rho^{2}+2}}}{2\pi\rho\sqrt{\rho^{2}+2}}\int_{\nicefrac{11\pi}{12}}^{\nicefrac{19\pi}{12}}\frac{1}{2A_{\rho}\left(\theta\right)}\left[\frac{B_{\rho,\delta}\left(\theta\right)}{\sqrt{2A_{\rho}\left(\theta\right)}}\frac{\Phi\left(\frac{B_{\rho,\delta}\left(\theta\right)}{\sqrt{2A_{\rho}\left(\theta\right)}}\right)}{\phi\left(\frac{B_{\rho,\delta}\left(\theta\right)}{\sqrt{2A_{\rho}\left(\theta\right)}}\right)}+1\right]d\theta,\label{eq:varClose}
\end{align}
with $B_{\rho,\delta}\left(\theta\right)$ and $A_{\rho}\left(\theta\right)>0$
as in (\ref{eq:B}) and (\ref{eq:A}). We finally have $\int_{\nicefrac{11\pi}{12}}^{\nicefrac{19\pi}{12}}\frac{d\theta}{2A_{\rho}\left(\theta\right)}$
\begin{align}
 & =\int_{\nicefrac{11\pi}{12}}^{\nicefrac{19\pi}{12}}\frac{\rho^{2}\left(\rho^{2}+2\right)d\theta}{\rho^{2}\left(\sin\left(2\theta\right)+2\right)+2\cos^{2}\left(\theta+\nicefrac{\pi}{4}\right)}\label{eq:varFinalIntegral}\\
 & =\rho\sqrt{\frac{\rho^{2}+2}{3}}\left[\pi-\tan^{-1}\left(\frac{\left(2+\sqrt{3}\right)\rho^{2}+\sqrt{3}+1}{\rho\sqrt{\rho^{2}+2}}\right)\right.\\
 & \hphantom{aaaaaaaaaaaaaaaaaaaaaaaaa}\left.-\tan^{-1}\left(\frac{\left(2-\sqrt{3}\right)\rho^{2}-\sqrt{3}+1}{\rho\sqrt{\rho^{2}+2}}\right)\right]\\
 & =\rho\sqrt{\frac{\rho^{2}+2}{3}}\left[\pi-\tan^{-1}\left(\rho\sqrt{\rho^{2}+2}\right)\right]=\rho\sqrt{\frac{\rho^{2}+2}{3}}\cos^{-1}\left(-\frac{1}{\rho^{2}+1}\right),\label{eq:varConcl}
\end{align}
where the first part of (\ref{eq:varConcl}) uses the identity $\tan^{-1}u+\tan^{-1}v=\tan^{-1}\frac{u+v}{1-uv}\mod\pi$,
when $uv\ne1$, and the second part uses basic trigonometry. Substituting
(\ref{eq:varConcl}) into (\ref{eq:varClose}) gives (\ref{eq:varZrhodelta})
and (\ref{eq:deriveVPD-1}) to (\ref{eq:deriveVPD-2}) gives (\ref{eq:varProbDiff}),
completing the proof.
\end{proof}

\section{Proofs of Theorems \ref{thm:limitZ} and \ref{thm:limitX} \label{sec:Derivations-for-=0000A72.3}}

\limitZ*
\begin{proof}
(\ref{eq:Zto1}) and (\ref{eq:Zto0}) follow from the triangle inequality,
Chebyshev's inequality, and (\ref{eq:varBounds}). Fixing $0<\epsilon<1$
and $\delta$ small (large) enough in (\ref{eq:proofZto1}) ((\ref{eq:proofZto0})),
we have 
\begin{align}
\Pr\left(\left|Z_{\rho,\delta}-1\right|>\epsilon\right) & \leq\frac{\mathrm{Var}\left(Z_{\rho,\delta}\right)}{\left(1-\epsilon-\Phi\left(\nicefrac{-\delta}{\sqrt{\rho^{2}+1}}\right)\right)^{2}}\stackrel[-\infty]{\delta}{\longrightarrow}0\label{eq:proofZto1}\\
\Pr\left(\left|Z_{\rho,\delta}\right|>\epsilon\right) & \leq\frac{\mathrm{Var}\left(Z_{\rho,\delta}\right)}{\left(\epsilon-\Phi\left(\nicefrac{-\delta}{\sqrt{\rho^{2}+1}}\right)\right)^{2}}\stackrel[\infty]{\delta}{\longrightarrow}0.\label{eq:proofZto0}
\end{align}

We have (\ref{eq:ZtoBern}) if we can show that $\lim_{\epsilon\rightarrow0^{+}}\lim_{\rho\rightarrow0^{+}}\Pr\left(Z_{\rho,\delta}\leq\epsilon\right)=\Phi\left(\delta\right)$
and $\lim_{\epsilon\rightarrow1^{-}}\lim_{\rho\rightarrow0^{+}}\Pr\left(Z_{\rho,\delta}\geq\epsilon\right)=\Phi\left(-\delta\right)$.
To that end note that
\begin{align}
\lim_{\epsilon\rightarrow0^{+}}\lim_{\rho\rightarrow0^{+}}\Pr\left(Z_{\rho,\delta}\leq\epsilon\right) & =\lim_{\epsilon\rightarrow0^{+}}\lim_{\rho\rightarrow0^{+}}\Phi\left(\delta+\rho\Phi^{-1}\left(\epsilon\right)\right)\\
 & =\lim_{\epsilon\rightarrow0^{+}}\Phi\left(\delta\right)=\Phi\left(\delta\right)\label{eq:contFinite0}
\end{align}
and that
\begin{align}
\lim_{\epsilon\rightarrow1^{-}}\lim_{\rho\rightarrow0^{+}}\Pr\left(Z_{\rho,\delta}\geq\epsilon\right) & =\lim_{\epsilon\rightarrow1^{-}}\lim_{\rho\rightarrow0^{+}}\Pr\left(Z_{\rho,\delta}>\epsilon\right)\label{eq:continuity}\\
 & =\lim_{\epsilon\rightarrow1^{-}}\lim_{\rho\rightarrow0^{+}}\left\{ 1-\Phi\left(\delta+\rho\Phi^{-1}\left(\epsilon\right)\right)\right\} \\
 & =\lim_{\epsilon\rightarrow1^{-}}\left\{ 1-\Phi\left(\delta\right)\right\} =\Phi\left(-\delta\right),\label{eq:contFinite1}
\end{align}
where (\ref{eq:continuity}) uses the continuity of $Z_{\rho,\delta}$,
and (\ref{eq:contFinite0}) and (\ref{eq:contFinite1}) use $\left|\Phi^{-1}\left(\epsilon\right)\right|<\infty$
for $\epsilon\in\left(0,1\right)$ and the continuity of $\Phi$. 

We obtain (\ref{eq:stanZtoN1}) by expanding $\Phi^{-1}\left(x\right)$
about $\Phi\left(\nicefrac{-\delta}{\sqrt{\rho^{2}+1}}\right)$, which
gives
\begin{align}
\Phi^{-1}\left(x\right) & =\Phi^{-1}\left(\Phi\left(\nicefrac{-\delta}{\sqrt{\rho^{2}+1}}\right)\right)+\frac{x-\Phi\left(\nicefrac{-\delta}{\sqrt{\rho^{2}+1}}\right)}{\phi\left(\Phi^{-1}\left(x^{*}\right)\right)}\\
 & =-\frac{\delta}{\sqrt{\rho^{2}+1}}+\frac{x-\Phi\left(\nicefrac{-\delta}{\sqrt{\rho^{2}+1}}\right)}{\phi\left(\Phi^{-1}\left(x^{*}\right)\right)},\label{eq:taylor}
\end{align}
where $x^{*}$ is between $x$ and $\Phi\left(\nicefrac{-\delta}{\sqrt{\rho^{2}+1}}\right)$.
Now, fixing $y\in\mathbb{R}$, we have 
\begin{align}
\Pr\left(\rho\left(Z_{\rho,\delta}-\Phi\left(\nicefrac{-\delta}{\sqrt{\rho^{2}+1}}\right)\right)\leq y\right) & =\Pr\left(Z_{\rho,\delta}\leq\Phi\left(\nicefrac{-\delta}{\sqrt{\rho^{2}+1}}\right)+\nicefrac{y}{\rho}\right)\\
 & =\Phi\left(\delta+\rho\Phi^{-1}\left(\Phi\left(\nicefrac{-\delta}{\sqrt{\rho^{2}+1}}\right)+\nicefrac{y}{\rho}\right)\right)\\
 & =\Phi\left(\delta\left(1-\nicefrac{\rho}{\sqrt{\rho^{2}+1}}\right)+\nicefrac{y}{\phi\left(\Phi^{-1}\left(y^{*}\right)\right)}\right)\label{eq:useTaylor}\\
 & \stackrel{\rho\rightarrow\infty}{\longrightarrow}\Phi\left(\sqrt{2\pi}y\right),\label{eq:conv2normal}
\end{align}
where (\ref{eq:useTaylor}) uses (\ref{eq:taylor}) with $y^{*}$
between $\Phi\left(\nicefrac{-\delta}{\sqrt{\rho^{2}+1}}\right)+\nicefrac{y}{\rho}$
and $\Phi\left(\nicefrac{-\delta}{\sqrt{\rho^{2}+1}}\right)$, and
(\ref{eq:conv2normal}) follows because $y^{*}\rightarrow\nicefrac{1}{2}$
as $\rho\rightarrow\infty$.

We obtain (\ref{eq:stanZtoN2}) by noting that 
\begin{align}
\Pr\left(\rho\left(Z_{\rho,\delta}-\Phi\left(\nicefrac{-\delta}{\sqrt{\rho^{2}+1}}\right)\right)\leq y\right) & =\Pr\left(Z_{\rho,\delta}\leq\Phi\left(\nicefrac{-\delta}{\sqrt{\rho^{2}+1}}\right)+\nicefrac{y}{\rho}\right)\\
 & =\Phi\left(\delta+\rho\Phi^{-1}\left(\Phi\left(\nicefrac{-\delta}{\sqrt{\rho^{2}+1}}\right)+\nicefrac{y}{\rho}\right)\right)\\
 & =\Phi\left(\delta\left(1-\nicefrac{\rho}{\sqrt{\rho^{2}+1}}\right)+\nicefrac{y}{\phi\left(\Phi^{-1}\left(y^{*}\right)\right)}\right)\label{eq:useTaylorRatio}\\
 & =\Phi\left(r\left(\rho-\nicefrac{\rho^{2}}{\sqrt{\rho^{2}+1}}\right)+\nicefrac{y}{\phi\left(\Phi^{-1}\left(y^{*}\right)\right)}\right)\\
 & \stackrel{\rho,\left|\delta\right|\rightarrow\infty}{\longrightarrow}\Phi\left(\nicefrac{y}{\phi\left(-r\right)}\right)=\Phi\left(\nicefrac{y}{\phi\left(r\right)}\right),\label{eq:conv2normalRatio}
\end{align}
where (\ref{eq:useTaylorRatio}) uses (\ref{eq:taylor}) with $y^{*}$
between $\Phi\left(\nicefrac{-\delta}{\sqrt{\rho^{2}+1}}\right)+\nicefrac{y}{\rho}$
and $\Phi\left(\nicefrac{-\delta}{\sqrt{\rho^{2}+1}}\right)$ and
(\ref{eq:conv2normalRatio}) follows because $y^{*}$ approaches $\Phi\left(-r\right)$
as $\rho,\left|\delta\right|\rightarrow\infty$ with $\nicefrac{\delta}{\rho}=r$
fixed.

Finally, (\ref{eq:ZtoHalf}) and (\ref{eq:ZtoPhi}) follow from (\ref{eq:stanZtoN1})
and (\ref{eq:stanZtoN2}), the triangle inequality, and Chebyshev's
inequality. For sufficiently large $\rho$ (\ref{eq:proofZtoHalf})
or $\rho,\left|\delta\right|$ (\ref{eq:proofZtoPhi}), we have 
\begin{align}
\Pr\left(\left|Z_{\rho,\delta}-\nicefrac{1}{2}\right|>\epsilon\right) & \leq\frac{\mathrm{Var}\left(Z_{\rho,\delta}\right)}{\left(\epsilon-\left|\nicefrac{1}{2}-\Phi\left(\nicefrac{-\delta}{\sqrt{\rho^{2}+1}}\right)\right|\right)^{2}}\stackrel[\infty]{\rho}{\longrightarrow}0\label{eq:proofZtoHalf}\\
\Pr\left(\left|Z_{\rho,\delta}-\Phi\left(-r\right)\right|>\epsilon\right) & \leq\frac{\mathrm{Var}\left(Z_{\rho,\delta}\right)}{\left(\epsilon-\left|\Phi\left(-r\right)-\Phi\left(\nicefrac{-\delta}{\sqrt{\rho^{2}+1}}\right)\right|\right)^{2}}\stackrel[\infty]{\rho,\left|\delta\right|}{\longrightarrow}0.\label{eq:proofZtoPhi}
\end{align}
\end{proof}
We state and prove two auxiliary propositions that underpin Theorem
\ref{thm:limitX}. 
\begin{prop}
\label{prop:limitab}Mapping functions $a_{\rho,\delta},b_{\rho,\delta}$
converge to the following limits:
\[
\begin{array}{rccccc}
 & \rho\rightarrow0^{+} & \rho\rightarrow\infty & \delta\rightarrow-\infty & \delta\rightarrow\infty & \rho,\left|\delta\right|\rightarrow\infty\\
a_{\rho,\delta}: & 0 & \infty & \infty & 0 & \infty\\
b_{\rho,\delta}: & 0 & \infty & 0 & \infty & \infty
\end{array}
\]
where the limits with $\rho,\left|\delta\right|\rightarrow\infty$
keep $\nicefrac{\delta}{\rho}=r$ fixed.
\end{prop}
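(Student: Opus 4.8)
The plan is to reduce the whole table to the asymptotics of two scalars: the mean $\mu_Z:=\mathbb{E}Z_{\rho,\delta}=\Phi(-\delta/\sqrt{\rho^{2}+1})$ (Theorem~\ref{thm:theGenMean}) and the variance $v_Z:=\mathrm{Var}(Z_{\rho,\delta})$; write $\bar\mu_Z:=1-\mu_Z$. Since $a_{\rho,\delta},b_{\rho,\delta}$ are defined by $\mathbb{E}X_{a_{\rho,\delta},b_{\rho,\delta}}=\mu_Z$ and $\mathrm{Var}(X_{a_{\rho,\delta},b_{\rho,\delta}})=v_Z$ (Section~\ref{subsec:A-Beta-Approximation}), the beta mean/variance formulas give $a_{\rho,\delta}/(a_{\rho,\delta}+b_{\rho,\delta})=\mu_Z$ and $a_{\rho,\delta}+b_{\rho,\delta}+1=\mu_Z\bar\mu_Z/v_Z$, equivalently
\[
a_{\rho,\delta}=\frac{\mu_Z^{2}\bar\mu_Z}{v_Z}-\mu_Z,\qquad b_{\rho,\delta}=\frac{\mu_Z\bar\mu_Z^{2}}{v_Z}-\bar\mu_Z
\]
(this is just \eqref{eq:alpha3}--\eqref{eq:beta3} rewritten; recall $0<v_Z<\mu_Z\bar\mu_Z$ by \eqref{eq:varBounds}, so $a_{\rho,\delta},b_{\rho,\delta}>0$). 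Everything then follows from the limits of $\mu_Z$, $\mu_Z\bar\mu_Z/v_Z$, $\mu_Z^{2}/v_Z$ and $\mu_Z/v_Z$.

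For the three ``regular'' columns I would invoke Theorem~\ref{thm:limitZ}. When $\rho\to0^{+}$, \eqref{eq:ZtoBern} gives $Z_{\rho,\delta}\implies\mathrm{Bernoulli}(\Phi(-\delta))$; since $Z_{\rho,\delta}\in[0,1]$, bounded convergence yields $\mu_Z\to\Phi(-\delta)$ and $v_Z=\mathbb{E}Z_{\rho,\delta}^{2}-\mu_Z^{2}\to\Phi(-\delta)\Phi(\delta)$, so $\mu_Z\bar\mu_Z-v_Z\to0$ while $v_Z$ stays bounded away from $0$; hence $a_{\rho,\delta}+b_{\rho,\delta}\to0$ and therefore $a_{\rho,\delta},b_{\rho,\delta}\to0$. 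When $\rho\to\infty$ (respectively $\rho,|\delta|\to\infty$ with $\delta/\rho=r$ fixed), \eqref{eq:stanZtoN1} (respectively \eqref{eq:stanZtoN2}) shows $\mathfrak{s}(Z_{\rho,\delta})=\rho(Z_{\rho,\delta}-\mu_Z)$ converges in distribution and is hence tight, so $Z_{\rho,\delta}-\mu_Z\longrightarrow0$ and, again by boundedness, $v_Z\to0$; meanwhile $\mu_Z\to\tfrac12$ (respectively $\Phi(-r)$), a point of $(0,1)$, so $\mu_Z\bar\mu_Z$ is bounded away from $0$. Thus $a_{\rho,\delta}+b_{\rho,\delta}=\mu_Z\bar\mu_Z/v_Z-1\to\infty$, and since $a_{\rho,\delta}=\mu_Z(a_{\rho,\delta}+b_{\rho,\delta})$, $b_{\rho,\delta}=\bar\mu_Z(a_{\rho,\delta}+b_{\rho,\delta})$ with $\mu_Z$ and $\bar\mu_Z$ eventually bounded below by a positive constant, both $a_{\rho,\delta}$ and $b_{\rho,\delta}$ diverge.

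The substantive case is $\delta\to\pm\infty$ with $\rho$ fixed. Because $f_{\rho,-\delta}(y)=f_{\rho,\delta}(1-y)$ forces $v_{Z_{\rho,-\delta}}=v_{Z_{\rho,\delta}}$ and $\mu_{Z_{\rho,-\delta}}=1-\mu_{Z_{\rho,\delta}}$, and the latter interchanges $a_{\rho,\delta}\leftrightarrow b_{\rho,\delta}$, it suffices to treat $\delta\to+\infty$ and prove $a_{\rho,\delta}\to0$, $b_{\rho,\delta}\to\infty$. Here $\mu_Z\to0$ with Mills-ratio rate $\mu_Z\asymp_{\rho}\delta^{-1}\exp\{-\delta^{2}/(2(\rho^{2}+1))\}$, so the whole matter is to locate $v_Z=\mathbb{E}Z_{\rho,\delta}^{2}-\mu_Z^{2}$, where $\mathbb{E}Z_{\rho,\delta}^{2}=\Pr(X_1\le X_0,\,X_2\le X_0)$ with $X_0\sim\mathcal{N}(0,1)$ and $X_1,X_2\sim\mathcal{N}(\delta,\rho^{2})$ independent. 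I would establish the two-sided estimate
\[
c_{\rho}\,\delta^{-3}\exp\!\left(-\frac{\delta^{2}}{\rho^{2}+2}\right)\ \le\ \mathbb{E}Z_{\rho,\delta}^{2}\ \le\ C_{\rho}\,\exp\!\left(-\frac{\delta^{2}}{\rho^{2}+2}\right)\qquad(\delta\text{ large}):
\]
the lower bound from $\Pr(X_1\le X_0,X_2\le X_0)\ge(1-\Phi(t))\,\Phi((t-\delta)/\rho)^{2}$ for every $t$, taking $t=2\delta/(\rho^{2}+2)$ (the common maximizer) and $1-\Phi(v)\ge\tfrac{v}{v^{2}+1}\phi(v)$; the upper bound from a Laplace estimate of $\int\Phi((z-\delta)/\rho)^{2}\phi(z)\,dz$ (using $\Phi((z-\delta)/\rho)\le\min\{1,\tfrac{\rho}{\delta-z}\phi((\delta-z)/\rho)\}$ for $z<\delta$), or alternatively by bounding $|G_{\rho,\delta}(\theta)|$ of Theorem~\ref{thm:theGenVar} uniformly over $[11\pi/12,19\pi/12]$, where $A_{\rho}(\theta)$ is bounded away from $0$ and $\sin(\theta+\pi/4)<0$, so for $\delta>0$ the argument of $\Phi/\phi$ tends to $-\infty$, the ratio $\Phi/\phi$ times $B_{\rho,\delta}(\theta)$ stays bounded, and the prefactor $\exp\{-\delta^{2}/(\rho^{2}+2)\}$ is what remains. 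Since $\mathbb{E}Z_{\rho,\delta}^{2}$ is thereby squeezed between quantities that are exponentially smaller than $\mu_Z\asymp\delta^{-1}\exp\{-\delta^{2}/(2(\rho^{2}+1))\}$ and exponentially larger than $\mu_Z^{2}\asymp\delta^{-2}\exp\{-\delta^{2}/(\rho^{2}+1)\}$, it follows that $v_Z\sim\mathbb{E}Z_{\rho,\delta}^{2}$, hence $\mu_Z^{2}/v_Z\to0$ while $\mu_Z/v_Z\to\infty$; plugging these into the two displayed formulas gives $a_{\rho,\delta}\to0$ and $b_{\rho,\delta}\to\infty$. The main obstacle is precisely the lower bound $\mathbb{E}Z_{\rho,\delta}^{2}\gtrsim_{\rho}\delta^{-3}\exp\{-\delta^{2}/(\rho^{2}+2)\}$: the crude inequality \eqref{eq:varBounds} gives only $v_Z=O(\mu_Z)$, neither $v_Z=o(\mu_Z)$ nor $v_Z\gg\mu_Z^{2}$, so a genuine tail/Laplace analysis of the event $\{X_1\le X_0,\,X_2\le X_0\}$ as $\delta\to\infty$ cannot be avoided.
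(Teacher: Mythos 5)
Your proposal is correct, and for the one substantive column it takes a genuinely different route from the paper. The skeleton agrees: the $\rho\rightarrow0^{+}$, $\rho\rightarrow\infty$, and $\rho,\left|\delta\right|\rightarrow\infty$ columns are read off from Theorem \ref{thm:limitZ} (the paper dispatches them in one sentence; your bounded-convergence justifications are a careful filling-in), the $\delta\rightarrow-\infty$ column follows from $a_{\rho,-\delta}=b_{\rho,\delta}$, and you correctly identify that the crux is showing $\mu_{Z}^{2}=o\left(v_{Z}\right)$ and $v_{Z}=o\left(\mu_{Z}\right)$ as $\delta\rightarrow\infty$, which (\ref{eq:varBounds}) alone cannot deliver. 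Where you diverge is in how these two asymptotic comparisons are obtained. The paper stays inside its polar-coordinate representation of $\mathrm{Var}\left(Z_{\rho,\delta}\right)$ from Theorem \ref{thm:theGenVar}: it proves the identity $\int H_{\rho,\delta}=1-2\Phi\left(\nicefrac{\delta}{\sqrt{\rho^{2}+1}}\right)$ from the symmetry $\mathrm{Var}\left(Z_{\rho,\delta}\right)=\mathrm{Var}\left(Z_{\rho,-\delta}\right)$, uses the sign of $G_{\rho,\delta}$ to get the upper bound (\ref{eq:intUpperBnd}) and then L'H\^opital for $v_{Z}=o\left(\mu_{Z}\right)$, and for $\mu_{Z}^{2}=o\left(v_{Z}\right)$ justifies a dominated-convergence passage under the $\theta$-integral followed by a pointwise limit built on the Mills-ratio expansion (\ref{eq:seqIneq}). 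You instead return to the probabilistic identity $\mathbb{E}Z_{\rho,\delta}^{2}=\Pr\left(X_{1}\leq X_{0},X_{2}\leq X_{0}\right)$ and squeeze it: the lower bound via the box event $\left\{ X_{0}\geq t\right\} \cap\left\{ X_{1}\leq t\right\} \cap\left\{ X_{2}\leq t\right\} $ at the Laplace point $t=\nicefrac{2\delta}{\left(\rho^{2}+2\right)}$, the upper bound via a Laplace estimate (or, as you note, for free from the sign of $G_{\rho,\delta}$ in the paper's formula). Both bounds land on the same rate $e^{-\nicefrac{\delta^{2}}{\left(\rho^{2}+2\right)}}$, which sits strictly between $\mu_{Z}\asymp\delta^{-1}e^{-\nicefrac{\delta^{2}}{2\left(\rho^{2}+1\right)}}$ and $\mu_{Z}^{2}$ on the logarithmic scale, and the conclusion follows from the explicit forms of (\ref{eq:alpha3})--(\ref{eq:beta3}). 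Your lower bound is the more elementary and more transparent argument---it makes visible \emph{why} the variance decays at exactly the rate it does---whereas the paper's version reuses its integral machinery and, as a by-product, produces the quantitative bound (\ref{eq:intUpperBnd}) that it needs again later in the same proof. The only places where your sketch leaves work to the reader are the two-sided Mills bounds and the Laplace upper estimate, all of which are routine.
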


\begin{proof}
While results for $\rho\rightarrow0^{+}$, $\rho\rightarrow\infty$,
and $\rho,\left|\delta\right|\rightarrow\infty$ ($\nicefrac{\delta}{\rho}=r$
fixed) follow directly from Theorem \ref{thm:limitZ}, we focus on
$\left|\delta\right|\rightarrow\infty$, starting with technical results. 

Putting $H_{\rho,\delta}\left(\theta\right)\coloneqq G_{\rho,\delta}\left(\theta\right)\left/\Phi\left(\nicefrac{B_{\rho,\delta}\left(\theta\right)}{\sqrt{2A_{\rho}\left(\theta\right)}}\right)\right.$,
we start by showing that 
\begin{align}
\int_{\nicefrac{11\pi}{12}}^{\nicefrac{19\pi}{12}}H_{\rho,\delta}\left(\theta\right)d\theta & =1-2\Phi\left(\nicefrac{\delta}{\sqrt{\rho^{2}+1}}\right),\textrm{ so that}\label{eq:integralH}\\
\int_{\nicefrac{11\pi}{12}}^{\nicefrac{19\pi}{12}}G_{\rho,\delta}\left(\theta\right)d\theta & \leq\Phi\left(\nicefrac{-\sqrt{2}\delta}{\sqrt{\rho^{2}+2}}\right)\left[1-2\Phi\left(\nicefrac{\delta}{\sqrt{\rho^{2}+1}}\right)\right],\label{eq:intUpperBnd}
\end{align}
for $G_{\rho,\delta}$ in (\ref{eq:G}). $\mathrm{Var}\left(Z_{\rho,\delta}\right)=\mathrm{Var}\left(Z_{\rho,-\delta}\right)$
in (\ref{eq:expEquation}) gives (\ref{eq:integralH}). Putting 
\begin{equation}
\tau_{1}\left(\delta\right)\coloneqq\int_{\nicefrac{11\pi}{12}}^{\nicefrac{19\pi}{12}}G_{\rho,\delta}\left(\theta\right)d\theta,\label{eq:tau1}
\end{equation}
\begin{equation}
\tau_{2}\left(\delta\right)\coloneqq\frac{\cos^{-1}\left(\nicefrac{-1}{\left(\rho^{2}+1\right)}\right)}{2\pi\exp\left(\nicefrac{\delta^{2}}{\left(\rho^{2}+2\right)}\right)},\quad\tau_{3}\left(\delta\right)\coloneqq\Phi\left(\nicefrac{-\delta}{\sqrt{\rho^{2}+1}}\right)^{2},\label{eq:tau23}
\end{equation}
we have $\tau_{1}\left(\delta\right)+\tau_{2}\left(\delta\right)-\tau_{3}\left(\delta\right)=\tau_{1}\left(-\delta\right)+\tau_{2}\left(-\delta\right)-\tau_{3}\left(-\delta\right)$
as in (\ref{eq:varZrhodelta}). Now, 
\begin{align}
\tau_{1}\left(-\delta\right) & =\frac{\sqrt{3}e^{-\frac{\delta^{2}}{\rho^{2}+2}}}{2\pi\rho\sqrt{\rho^{2}+2}}\int_{\nicefrac{11\pi}{12}}^{\nicefrac{19\pi}{12}}\frac{B_{\rho,-\delta}\left(\theta\right)}{\left[2A_{\rho}\left(\theta\right)\right]^{\nicefrac{3}{2}}}\frac{\Phi\left(\frac{B_{\rho,-\delta}\left(\theta\right)}{\sqrt{2A_{\rho}\left(\theta\right)}}\right)}{\phi\left(\frac{B_{\rho,-\delta}\left(\theta\right)}{\sqrt{2A_{\rho}\left(\theta\right)}}\right)}d\theta\\
 & =-\frac{\sqrt{3}e^{-\frac{\delta^{2}}{\rho^{2}+2}}}{2\pi\rho\sqrt{\rho^{2}+2}}\int_{\nicefrac{11\pi}{12}}^{\nicefrac{19\pi}{12}}\frac{B_{\rho,\delta}\left(\theta\right)}{\left[2A_{\rho}\left(\theta\right)\right]^{\nicefrac{3}{2}}}\frac{\Phi\left(-\frac{B_{\rho,\delta}\left(\theta\right)}{\sqrt{2A_{\rho}\left(\theta\right)}}\right)}{\phi\left(-\frac{B_{\rho,\delta}\left(\theta\right)}{\sqrt{2A_{\rho}\left(\theta\right)}}\right)}d\theta\\
 & =-\frac{\sqrt{3}e^{-\frac{\delta^{2}}{\rho^{2}+2}}}{2\pi\rho\sqrt{\rho^{2}+2}}\int_{\nicefrac{11\pi}{12}}^{\nicefrac{19\pi}{12}}\frac{B_{\rho,\delta}\left(\theta\right)}{\left[2A_{\rho}\left(\theta\right)\right]^{\nicefrac{3}{2}}}\frac{\left[1-\Phi\left(\frac{B_{\rho,\delta}\left(\theta\right)}{\sqrt{2A_{\rho}\left(\theta\right)}}\right)\right]}{\phi\left(\frac{B_{\rho,\delta}\left(\theta\right)}{\sqrt{2A_{\rho}\left(\theta\right)}}\right)}d\theta\\
 & =\tau_{1}\left(\delta\right)-\int_{\nicefrac{11\pi}{12}}^{\nicefrac{19\pi}{12}}H_{\rho,\delta}\left(\theta\right)d\theta
\end{align}
because $B_{\rho,-\delta}\left(\theta\right)=-B_{\rho,\delta}\left(\theta\right)$.
Note that $\tau_{2}\left(-\delta\right)=\tau_{2}\left(\delta\right)$.
We finally have
\begin{align}
\tau_{3}\left(-\delta\right) & =\Phi\left(\nicefrac{\delta}{\sqrt{\rho^{2}+1}}\right)^{2}=\left[1-\Phi\left(\nicefrac{-\delta}{\sqrt{\rho^{2}+1}}\right)\right]^{2}\\
 & =1-2\Phi\left(\nicefrac{-\delta}{\sqrt{\rho^{2}+1}}\right)+\tau_{3}\left(\delta\right)=-1+2\Phi\left(\nicefrac{\delta}{\sqrt{\rho^{2}+1}}\right)+\tau_{3}\left(\delta\right).
\end{align}
Putting this all together with $\tau_{1}\left(\delta\right)+\tau_{2}\left(\delta\right)-\tau_{3}\left(\delta\right)=\tau_{1}\left(-\delta\right)+\tau_{2}\left(-\delta\right)-\tau_{3}\left(-\delta\right)$
gives (\ref{eq:integralH}).

Turning to (\ref{eq:intUpperBnd}), we first consider the case $\delta>0$.
Note first that $G_{\rho,\delta}\left(\theta\right)<0$ for $\delta>0$
and $\nicefrac{11\pi}{12}<\theta<\nicefrac{19\pi}{12}$. This follows
from the definitions of $B_{\rho,\delta}\left(\theta\right)$ and
$G_{\rho,\delta}\left(\theta\right)$ in (\ref{eq:B}) and (\ref{eq:G}),
namely $\sin\left(\theta+\nicefrac{\pi}{4}\right)<0$ for $\nicefrac{11\pi}{12}<\theta<\nicefrac{19\pi}{12}$,
and implies that
\begin{align}
\int_{\nicefrac{11\pi}{12}}^{\nicefrac{19\pi}{12}}G_{\rho,\delta}\left(\theta\right)d\theta & \leq\int_{\nicefrac{11\pi}{12}}^{\nicefrac{19\pi}{12}}H_{\rho,\delta}\left(\theta\right)d\theta\min_{\frac{11\pi}{12}\leq\theta\leq\frac{19\pi}{12}}\Phi\left(\nicefrac{B_{\rho,\delta}\left(\theta\right)}{\sqrt{2A_{\rho}\left(\theta\right)}}\right)\\
 & =\left[1-2\Phi\left(\nicefrac{\delta}{\sqrt{\rho^{2}+1}}\right)\right]\Phi\left(\nicefrac{B_{\rho,\delta}\left(\frac{5\pi}{4}\right)}{\sqrt{2A_{\rho}\left(\frac{5\pi}{4}\right)}}\right)\label{eq:useIHlem1}\\
 & =\left[1-2\Phi\left(\nicefrac{\delta}{\sqrt{\rho^{2}+1}}\right)\right]\Phi\left(\nicefrac{-\sqrt{2}\delta}{\sqrt{\rho^{2}+2}}\right),
\end{align}
where (\ref{eq:useIHlem1}) uses (\ref{eq:integralH}). We now turn
to the case $\delta<0$. Note that $B_{\rho,\delta}\left(\theta\right)>0$
when $\delta<0$ and $\nicefrac{11\pi}{12}\leq\theta\leq\nicefrac{19\pi}{12}$
implies that $G_{\rho,\delta}\left(\theta\right)>0$ when $\delta<0$
and $\nicefrac{11\pi}{12}\leq\theta\leq\nicefrac{19\pi}{12}$. Following
a path similar to that above, we have
\begin{align}
\int_{\nicefrac{11\pi}{12}}^{\nicefrac{19\pi}{12}}G_{\rho,\delta}\left(\theta\right)d\theta & \leq\int_{\nicefrac{11\pi}{12}}^{\nicefrac{19\pi}{12}}H_{\rho,\delta}\left(\theta\right)d\theta\max_{\frac{11\pi}{12}\leq\theta\leq\frac{19\pi}{12}}\Phi\left(\nicefrac{B_{\rho,\delta}\left(\theta\right)}{\sqrt{2A_{\rho}\left(\theta\right)}}\right)\\
 & =\left[1-2\Phi\left(\nicefrac{\delta}{\sqrt{\rho^{2}+1}}\right)\right]\Phi\left(\nicefrac{B_{\rho,\delta}\left(\frac{5\pi}{4}\right)}{\sqrt{2A_{\rho}\left(\frac{5\pi}{4}\right)}}\right)\label{eq:useIHlem3}\\
 & =\left[1-2\Phi\left(\nicefrac{\delta}{\sqrt{\rho^{2}+1}}\right)\right]\Phi\left(\nicefrac{-\sqrt{2}\delta}{\sqrt{\rho^{2}+2}}\right),
\end{align}
where (\ref{eq:useIHlem3}) uses (\ref{eq:integralH}). We finally
note that $G_{\rho,0}\left(\theta\right)=1-2\Phi\left(\nicefrac{0}{\sqrt{\rho^{2}+1}}\right)=0$,
so that the bound works when $\delta=0$, giving (\ref{eq:intUpperBnd}).

We are now ready to prove that $\lim_{\delta\rightarrow\infty}a_{\rho,\delta}=0$.
From (\ref{eq:alpha3}) we note that 
\begin{equation}
\lim_{\delta\rightarrow\infty}a_{\rho,\delta}=\lim_{\delta\rightarrow\infty}\frac{\Phi\left(\nicefrac{-\delta}{\sqrt{\rho^{2}+1}}\right)^{2}}{\mathrm{Var}\left(Z_{\rho,\delta}\right)}
\end{equation}
because $\Phi\left(\nicefrac{-\delta}{\sqrt{\rho^{2}+1}}\right)\stackrel[\infty]{\delta}{\longrightarrow}0$
and $\Phi\left(\nicefrac{\delta}{\sqrt{\rho^{2}+1}}\right)\stackrel[\infty]{\delta}{\longrightarrow}1$.
Now, if we can show that
\begin{equation}
\lim_{\delta\rightarrow\infty}\left(\frac{\Phi\left(\nicefrac{-\delta}{\sqrt{\rho^{2}+1}}\right)^{2}}{\mathrm{Var}\left(Z_{\rho,\delta}\right)}\right)^{-1}=\lim_{\delta\rightarrow\infty}\frac{\mathrm{Var}\left(Z_{\rho,\delta}\right)}{\Phi\left(\nicefrac{-\delta}{\sqrt{\rho^{2}+1}}\right)^{2}}=\infty,
\end{equation}
we are done. That is to say, by Theorem \ref{thm:theGenVar} (note
especially (\ref{eq:varClose}) of the proof) it is enough to show
that
\begin{equation}
\frac{c_{1,\rho}e^{\frac{-\delta^{2}}{\rho^{2}+2}}}{\Phi\left(\nicefrac{-\delta}{\sqrt{\rho^{2}+1}}\right)^{2}}\int_{\nicefrac{11\pi}{12}}^{\nicefrac{19\pi}{12}}\frac{1}{2A_{\rho}\left(\theta\right)}\left[1+\frac{B_{\rho,\delta}\left(\theta\right)}{\sqrt{2A_{\rho}\left(\theta\right)}}\frac{\Phi\left(\frac{B_{\rho,\delta}\left(\theta\right)}{\sqrt{2A_{\rho}\left(\theta\right)}}\right)}{\phi\left(\frac{B_{\rho,\delta}\left(\theta\right)}{\sqrt{2A_{\rho}\left(\theta\right)}}\right)}\right]d\theta\stackrel[\infty]{\delta}{\rightarrow}\infty,\label{eq:intToInf}
\end{equation}
where $c_{1,\rho}\coloneqq\frac{1}{2\pi\rho}\sqrt{\frac{3}{\rho^{2}+2}}$.
To see (\ref{eq:intToInf}) we note that the dominated convergence
theorem (DCT) allows us to bring the limit (and any terms that depend
on $\delta$) under the integral sign. To see that the DCT applies,
note first that 
\begin{equation}
0<\frac{1}{2A_{\rho}\left(\theta\right)}=\frac{\rho^{2}\left(\rho^{2}+2\right)}{\rho^{2}\left(\sin\left(2\theta\right)+2\right)+2\cos^{2}\left(\theta+\nicefrac{\pi}{4}\right)}\leq\nicefrac{2}{3}\left(\rho^{2}+2\right)
\end{equation}
because $\sin\left(2\theta\right)+2\in\left[\nicefrac{3}{2},3\right]$
and $\cos^{2}\left(\theta+\nicefrac{\pi}{4}\right)\in\left[0,\nicefrac{3}{4}\right]$
when $\theta\in\left[\nicefrac{11\pi}{12},\nicefrac{19\pi}{12}\right]$.
Noting then that $B_{\rho,\delta}\left(\theta\right)<0$ when $\delta>0$
(see the proof of (\ref{eq:intUpperBnd})), we have 
\begin{align}
\left|\frac{c_{1,\rho}}{e^{\frac{\delta^{2}}{\rho^{2}+2}}}\frac{B_{\rho,\delta}\left(\theta\right)}{\sqrt{2A_{\rho}\left(\theta\right)}}\frac{\Phi\left(\frac{B_{\rho,\delta}\left(\theta\right)}{\sqrt{2A_{\rho}\left(\theta\right)}}\right)}{\phi\left(\frac{B_{\rho,\delta}\left(\theta\right)}{\sqrt{2A_{\rho}\left(\theta\right)}}\right)}\right| & =-\frac{c_{1,\rho}}{e^{\frac{\delta^{2}}{\rho^{2}+2}}}\frac{B_{\rho,\delta}\left(\theta\right)}{\sqrt{2A_{\rho}\left(\theta\right)}}\frac{\Phi\left(\frac{B_{\rho,\delta}\left(\theta\right)}{\sqrt{2A_{\rho}\left(\theta\right)}}\right)}{\phi\left(\frac{B_{\rho,\delta}\left(\theta\right)}{\sqrt{2A_{\rho}\left(\theta\right)}}\right)}\\
 & \leq-H_{\rho,\delta}\left(\theta\right)\Phi\left(\nicefrac{-\sqrt{2}\delta}{\sqrt{\rho^{2}+2}}\right),\label{eq:twoTerms}
\end{align}
where the terms in (\ref{eq:twoTerms}) come from the proof of (\ref{eq:intUpperBnd}).
Using (\ref{eq:integralH}) we then have 
\begin{equation}
-\int_{\nicefrac{11\pi}{12}}^{\nicefrac{19\pi}{12}}H_{\rho,\delta}\left(\theta\right)d\theta=2\Phi\left(\nicefrac{\delta}{\sqrt{\rho^{2}+1}}\right)-1\leq1,
\end{equation}
so that the DCT applies to the left-hand side of (\ref{eq:intToInf}).
Ignoring terms that do not depend on $\delta$ (both are positive)
and focussing on the limit of the integrand, we now show that
\begin{equation}
\lim_{\delta\rightarrow\infty}\frac{\phi\left(-c_{2,\rho}\delta\right)}{\Phi\left(-c_{3,\rho}\delta\right)^{2}}\left[1-c_{4,\rho}\left(\theta\right)\delta\frac{\Phi\left(-c_{4,\rho}\left(\theta\right)\delta\right)}{\phi\left(-c_{4,\rho}\left(\theta\right)\delta\right)}\right]=\infty,\label{eq:cleanLim}
\end{equation}
for $\nicefrac{11\pi}{12}\leq\theta\leq\nicefrac{19\pi}{12}$, and
\begin{align}
c_{2,\rho} & \coloneqq\sqrt{\nicefrac{2}{\left(\rho^{2}+2\right)}}>0,\qquad c_{3,\rho}\coloneqq\nicefrac{1}{\sqrt{\rho^{2}+1}}>0,\\
c_{4,\rho}\left(\theta\right) & \coloneqq\frac{\sqrt{6}\rho\left|\sin\left(\theta+\nicefrac{\pi}{4}\right)\right|}{\sqrt{\left(\rho^{2}+2\right)\left\{ \rho^{2}\left[\sin\left(2\theta\right)+2\right]+2\cos^{2}\left(\theta+\nicefrac{\pi}{4}\right)\right\} }}>0.
\end{align}
Our proof of (\ref{eq:cleanLim}) uses the following inequality:
\begin{align}
\Phi\left(x\right) & \leq\min\left(-\phi\left(x\right)\left[\nicefrac{1}{x}-\nicefrac{1}{x^{3}}+\nicefrac{3}{x^{5}}\right],\nicefrac{-\phi\left(x\right)}{x}\right)\textrm{ for }x<0,\label{eq:seqIneq}\\
\textrm{\emph{i.e.}, }1-\Phi\left(x\right) & \leq\min\left(\hphantom{-}\phi\left(x\right)\left[\nicefrac{1}{x}-\nicefrac{1}{x^{3}}+\nicefrac{3}{x^{5}}\right],\nicefrac{\hphantom{-}\phi\left(x\right)}{x}\right)\textrm{ for }x>0,\label{eq:fromSmall}
\end{align}
where \S2.3.4 of \citet{S10} derives (\ref{eq:fromSmall}). Plugging
(\ref{eq:seqIneq}) into (\ref{eq:cleanLim}) we have
\begin{align}
\textrm{\eqref{eq:cleanLim}} & \geq\frac{c_{3,\rho}^{2}}{c_{4,\rho}\left(\theta\right)^{2}}\lim_{\delta\rightarrow\infty}\frac{\phi\left(-c_{2,\rho}\delta\right)}{\phi\left(-c_{3,\rho}\delta\right)^{2}}\left[1-\frac{3}{c_{4,\rho}\left(\theta\right)^{2}\delta^{2}}\right]\\
 & =\frac{\sqrt{2\pi}c_{3,\rho}^{2}}{c_{4,\rho}\left(\theta\right)^{2}}\lim_{\delta\rightarrow\infty}\exp\left(\frac{\delta^{2}}{\left(\rho^{2}+1\right)\left(\rho^{2}+2\right)}\right)\left[1-\frac{3}{c_{4,\rho}\left(\theta\right)^{2}\delta^{2}}\right]=\infty,
\end{align}
so that we have shown (\ref{eq:cleanLim}), for $\nicefrac{11\pi}{12}\leq\theta\leq\nicefrac{19\pi}{12}$,
and so (\ref{eq:intToInf}), which implies that $\lim_{\delta\rightarrow\infty}a_{\rho,\delta}=0$.

We now turn to $\lim_{\delta\rightarrow\infty}b_{\rho,\delta}=\infty$.
From (\ref{eq:beta3}) we note that
\begin{equation}
\lim_{\delta\rightarrow\infty}b_{\rho,\delta}=\lim_{\delta\rightarrow\infty}\frac{\Phi\left(\nicefrac{-\delta}{\sqrt{\rho^{2}+1}}\right)}{\mathrm{Var}\left(Z_{\rho,\delta}\right)}-1,\label{eq:calcBeta3Lim-2}
\end{equation}
where (\ref{eq:calcBeta3Lim-2}) uses $\Phi\left(\nicefrac{\delta}{\sqrt{\rho^{2}+1}}\right)\stackrel[\infty]{\delta}{\longrightarrow}1$.
Now, if we can show that 
\begin{equation}
\lim_{\delta\rightarrow\infty}\left(\frac{\Phi\left(\nicefrac{-\delta}{\sqrt{\rho^{2}+1}}\right)}{\mathrm{Var}\left(Z_{\rho,\delta}\right)}\right)^{-1}=\lim_{\delta\rightarrow\infty}\frac{\mathrm{Var}\left(Z_{\rho,\delta}\right)}{\Phi\left(\nicefrac{-\delta}{\sqrt{\rho^{2}+1}}\right)}=0,\label{eq:calcBeta3Lim-2.5}
\end{equation}
we have the desired result. To that end note that
\begin{equation}
\frac{\mathrm{Var}\left(Z_{\rho,\delta}\right)}{\Phi\left(\nicefrac{-\delta}{\sqrt{\rho^{2}+1}}\right)}\leq\frac{\cos^{-1}\left(\nicefrac{-1}{\left(\rho^{2}+1\right)}\right)}{2\pi\exp\left(\nicefrac{\delta^{2}}{\left(\rho^{2}+2\right)}\right)\Phi\left(\nicefrac{-\delta}{\sqrt{\rho^{2}+1}}\right)}-\frac{\Phi\left(\nicefrac{-\sqrt{2}\delta}{\sqrt{\rho^{2}+2}}\right)}{\Phi\left(\nicefrac{-\delta}{\sqrt{\rho^{2}+1}}\right)},\label{eq:calcBeta3Lim-3}
\end{equation}
where (\ref{eq:calcBeta3Lim-3}) uses Theorem \ref{thm:theGenVar},
(\ref{eq:intUpperBnd}), $\Phi\left(\nicefrac{\delta}{\sqrt{\rho^{2}+1}}\right)\stackrel[\infty]{\delta}{\longrightarrow}1$,
and $\Phi\left(\nicefrac{-\delta}{\sqrt{\rho^{2}+1}}\right)\stackrel[\infty]{\delta}{\longrightarrow}0$.
We then have
\begin{align}
\lim_{\delta\rightarrow\infty}\frac{\Phi\left(\nicefrac{-\delta}{\sqrt{\rho^{2}+1}}\right)}{\exp\left(\nicefrac{-\delta^{2}}{\left(\rho^{2}+2\right)}\right)} & =\lim_{\delta\rightarrow\infty}\frac{\left(\rho^{2}+2\right)\exp\left(\nicefrac{\rho^{2}\delta^{2}}{2\left(\rho^{2}+1\right)\left(\rho^{2}+2\right)}\right)}{2\delta\sqrt{2\pi\left(\rho^{2}+1\right)}}=\infty,\label{eq:calcBeta3Lim-4}\\
\lim_{\delta\rightarrow\infty}\frac{\Phi\left(\nicefrac{-\sqrt{2}\delta}{\sqrt{\rho^{2}+2}}\right)}{\Phi\left(\nicefrac{-\delta}{\sqrt{\rho^{2}+1}}\right)} & =\sqrt{\frac{2\left(\rho^{2}+1\right)}{\rho^{2}+2}}\lim_{\delta\rightarrow\infty}e^{\nicefrac{-\rho^{2}\delta^{2}}{\left[2\left(\rho^{2}+1\right)\left(\rho^{2}+2\right)\right]}}=0,\label{eq:calcBeta3Lim-5}
\end{align}
where (\ref{eq:calcBeta3Lim-4}) and (\ref{eq:calcBeta3Lim-5}) use
L'Hôpital's rule. Using (\ref{eq:calcBeta3Lim-4}) and (\ref{eq:calcBeta3Lim-5})
in (\ref{eq:calcBeta3Lim-3}) gives (\ref{eq:calcBeta3Lim-2.5}).
Substituting (\ref{eq:calcBeta3Lim-2.5}) into (\ref{eq:calcBeta3Lim-2})
gives $\lim_{\delta\rightarrow\infty}b_{\rho,\delta}=\infty$. Showing
that $a_{\rho,\delta}\rightarrow0$ and $b_{\rho,\delta}\rightarrow\infty$,
as $\delta\rightarrow\infty$, completes the proof because $a_{\rho,-\delta}=b_{\rho,\delta}$.
\end{proof}
\begin{prop}
\label{prop:Xto}For $\alpha,\beta>0$ and $X_{\alpha,\beta}\sim\mathrm{Beta}\left(\alpha,\beta\right)$
we have:
\begin{enumerate}
\item \label{enu:propXto0}If $\alpha=o\left(\beta\right)$ and $\beta\rightarrow\infty$,
then $X_{\alpha,\beta}\longrightarrow0$.
\item \label{enu:propXto1}If $\alpha\rightarrow\infty$ and $\beta=o\left(\alpha\right)$,
then $X_{\alpha,\beta}\longrightarrow1$.
\item \label{enu:propXtoBern}If $\alpha,\beta\rightarrow0^{+}$ so that
$\nicefrac{\alpha}{\left(\alpha+\beta\right)}\rightarrow\lambda\in\left(0,1\right)$,
then $X_{\alpha,\beta}\implies\mathrm{Bernoulli}\left(\lambda\right)$.
\item \label{enu:propXtoNorm}If $\alpha,\beta\rightarrow\infty$ so that
$\nicefrac{\alpha}{\left(\alpha+\beta\right)}\rightarrow\lambda\in\left(0,1\right)$,
then $\sqrt{\alpha+\beta}\left(X_{\alpha,\beta}-\nicefrac{\alpha}{\left(\alpha+\beta\right)}\right)\implies\mathcal{N}\left(0,\lambda\left(1-\lambda\right)\right)$.
\end{enumerate}
\end{prop}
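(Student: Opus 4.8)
The plan is to treat the four claims separately, each with a short self-contained argument.

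\emph{Claims (\ref{enu:propXto0}) and (\ref{enu:propXto1}).} I would dispatch these with Chebyshev's inequality. Since $\mathbb{E}X_{\alpha,\beta}=\alpha/(\alpha+\beta)$ and $\mathrm{Var}(X_{\alpha,\beta})=\alpha\beta/[(\alpha+\beta)^{2}(\alpha+\beta+1)]\le\alpha/[(\alpha+\beta)(\alpha+\beta+1)]$, when $\alpha=o(\beta)$ and $\beta\to\infty$ both the mean and this variance bound tend to $0$, giving $X_{\alpha,\beta}\longrightarrow0$. Claim (\ref{enu:propXto1}) is then immediate by applying (\ref{enu:propXto0}) to $1-X_{\alpha,\beta}\sim\mathrm{Beta}(\beta,\alpha)$.

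\emph{Claim (\ref{enu:propXtoBern}).} Here the target is to show that $\Pr(X_{\alpha,\beta}\le x)\to1-\lambda$ for every fixed $x\in(0,1)$; since these are exactly the continuity points of the $\mathrm{Bernoulli}(\lambda)$ distribution function, this is equivalent to the claimed convergence in distribution. The main input is $\Gamma(t)\sim1/t$ as $t\to0^{+}$, which gives $\alpha B(\alpha,\beta)=\Gamma(\alpha+1)\Gamma(\beta)/\Gamma(\alpha+\beta)\to(\alpha+\beta)/\beta$ in the stated regime, and symmetrically with $\alpha,\beta$ interchanged. For a lower bound I would use $(1-y)^{\beta-1}\ge1$ on $[0,x]$ once $\beta\le1$, so that $\Pr(X_{\alpha,\beta}\le x)\ge x^{\alpha}/[\alpha B(\alpha,\beta)]\to\beta/(\alpha+\beta)\to1-\lambda$. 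For the matching upper bound I would write $\Pr(X_{\alpha,\beta}\le x)=1-\Pr(Y\le1-x)$ with $Y\sim\mathrm{Beta}(\beta,\alpha)$ (using continuity of $X_{\alpha,\beta}$) and apply the same lower bound to $Y$, namely $\Pr(Y\le1-x)\ge(1-x)^{\beta}/[\beta B(\beta,\alpha)]\to\alpha/(\alpha+\beta)\to\lambda$, so that $\limsup\Pr(X_{\alpha,\beta}\le x)\le1-\lambda$. Combining the two bounds yields the limit.

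\emph{Claim (\ref{enu:propXtoNorm}).} I would use the beta--gamma representation $X_{\alpha,\beta}\overset{\mathrm{d}}{=}G_{\alpha}/(G_{\alpha}+G_{\beta})$ with independent $G_{\alpha}\sim\mathrm{Gamma}(\alpha,1)$ and $G_{\beta}\sim\mathrm{Gamma}(\beta,1)$. Writing $A\coloneqq G_{\alpha}-\alpha$, $B\coloneqq G_{\beta}-\beta$, $n\coloneqq\alpha+\beta$, and $\lambda_{n}\coloneqq\alpha/n$, a short algebraic rearrangement gives $\sqrt{n}\,(X_{\alpha,\beta}-\lambda_{n})=(\beta A-\alpha B)/[\sqrt{n}\,(n+A+B)]$. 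Since $\mathrm{Var}(A+B)=n$ we have $n+A+B=n(1+o_{p}(1))$, while $\beta A-\alpha B$ has mean $0$ and variance $\alpha\beta n$; its standardization $W_{n}\coloneqq(\beta A-\alpha B)/\sqrt{\alpha\beta n}$ satisfies $W_{n}\implies\mathcal{N}(0,1)$, because its characteristic function factors over the two independent gammas and each factor converges to the Gaussian one (the usual Gamma central limit theorem, valid for real shape parameters via $\mathbb{E}e^{isG_{\alpha}}=(1-is)^{-\alpha}$). Hence $\sqrt{n}\,(X_{\alpha,\beta}-\lambda_{n})=\sqrt{\alpha\beta/n^{2}}\,W_{n}\,(1+o_{p}(1))$, and since $\alpha\beta/n^{2}=\lambda_{n}(1-\lambda_{n})\to\lambda(1-\lambda)$, Slutsky's theorem delivers the $\mathcal{N}(0,\lambda(1-\lambda))$ limit. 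As an alternative one could verify convergence of all moments directly from $\mathbb{E}X_{\alpha,\beta}^{k}=\prod_{j=0}^{k-1}(\alpha+j)/(\alpha+\beta+j)$, since the normal law is moment-determined.

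The step I expect to be the main obstacle is the two-sided estimate in Claim (\ref{enu:propXtoBern}): a one-sided incomplete-beta bound on $\int_{0}^{x}y^{\alpha-1}(1-y)^{\beta-1}\,dy$ is not tight (the crude upper bound loses a factor $(1-x)^{-1}$), so the squeeze has to be obtained instead by also bounding the complementary probability $\Pr(Y\le1-x)$ from below via the same device. Claim (\ref{enu:propXtoNorm}) is routine once the beta--gamma representation is in place, and Claims (\ref{enu:propXto0})--(\ref{enu:propXto1}) are immediate.
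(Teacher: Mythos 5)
Your proof is correct, and for parts \ref{enu:propXto0}--\ref{enu:propXto1} it coincides with the paper's (Chebyshev plus the vanishing of the variance; the paper proves part \ref{enu:propXto1} ``in the same manner'' rather than via the symmetry $1-X_{\alpha,\beta}\sim\mathrm{Beta}\left(\beta,\alpha\right)$, but that is cosmetic). The genuine differences are in parts \ref{enu:propXtoBern} and \ref{enu:propXtoNorm}. For part \ref{enu:propXtoBern} the paper invokes the expansion $B\left(x;\alpha,\beta\right)=\alpha^{-1}x^{\alpha}\left(1+\mathcal{O}\left(x\right)\right)$ as $x\rightarrow0^{+}$ and then sends $\alpha,\beta,\epsilon$ to zero together, so the CDF limit is only exhibited near the endpoints; your squeeze with the elementary bounds $\left(1-y\right)^{\beta-1}\geq1$ (once $\beta\leq1$) applied to both $\Pr\left(X_{\alpha,\beta}\leq x\right)$ and the complementary probability works at every \emph{fixed} $x\in\left(0,1\right)$, which is exactly the set of continuity points of the Bernoulli CDF, so your argument establishes weak convergence more directly and without the incomplete-beta asymptotics. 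For part \ref{enu:propXtoNorm} both proofs use the beta--gamma representation $X_{\alpha,\beta}\stackrel{\mathscr{L}}{=}G_{\alpha}/\left(G_{\alpha}+G_{\beta}\right)$, but the paper first assumes ``without loss of generality'' that $\alpha,\beta$ are integers so that the gammas are sums of i.i.d.\ exponentials amenable to the classical CLT and SLLN; that reduction is not actually justified there (one would need a sandwiching argument in the shape parameter). Your route via the exact characteristic function $\left(1-is\right)^{-\alpha}$ handles real shape parameters directly and so closes that small gap, at the cost of a slightly less probabilistically transparent CLT step. The algebra $\sqrt{n}\left(X_{\alpha,\beta}-\lambda_{n}\right)=\left(\beta A-\alpha B\right)/\left[\sqrt{n}\left(n+A+B\right)\right]$ and the variance computation $\mathrm{Var}\left(\beta A-\alpha B\right)=\alpha\beta n$ check out, and Slutsky finishes as you say.
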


\begin{proof}
We prove part \ref{enu:propXto0}. Part \ref{enu:propXto1} follows
in the same manner. First, we note that
\begin{equation}
\hat{\lim}\,\mathrm{Var}\left(X_{\alpha,\beta}\right)=\hat{\lim}\,\frac{\alpha}{\left(\alpha+\beta\right)^{2}\left(1+\nicefrac{\alpha}{\beta}+\nicefrac{1}{\beta}\right)}=0,\label{eq:betaVarTo0}
\end{equation}
where $\hat{\lim}$ is the limit with $\beta\rightarrow\infty$ and
$\alpha=o\left(\beta\right)$. Fix $\epsilon>0$ and $\beta$ large
enough. Then, using the triangle inequality, Chebyshev's inequality,
and (\ref{eq:betaVarTo0}), we have
\begin{equation}
\hat{\lim}\,\Pr\left(\left|X_{\alpha,\beta}\right|>\epsilon\right)\leq\hat{\lim}\,\left.\mathrm{Var}\left(X_{\alpha,\beta}\right)\right/\left(\epsilon-\nicefrac{\alpha}{\left(\alpha+\beta\right)}\right)^{2}=0.
\end{equation}

For part \ref{enu:propXtoBern} let $B\left(x;\alpha,\beta\right)\coloneqq\int_{0}^{x}y^{\alpha-1}\left(1-y\right)^{\beta-1}dy$
be the incomplete beta function for which $B\left(x;\alpha,\beta\right)=\alpha^{-1}x^{\alpha}\left(1+\mathcal{O}\left(x\right)\right)$,
as $x\rightarrow0^{+}$ (\citet{P68}). Note also that $\Gamma\left(x\right)\sim\nicefrac{1}{x}-\gamma$,
for Euler's constant\textbf{ }$\gamma\approx0.577216$, as $x\rightarrow0^{+}$.
Then, for $\epsilon\in\left(0,1\right)$, we have
\begin{align}
\Pr\left(X_{\alpha,\beta}\leq\epsilon\right) & =\frac{B\left(\epsilon;\alpha,\beta\right)}{B\left(\alpha,\beta\right)}=\frac{B\left(\epsilon;\alpha,\beta\right)\Gamma\left(\alpha+\beta\right)}{\Gamma\left(\alpha\right)\Gamma\left(\beta\right)}\\
 & \sim\frac{\beta\epsilon^{\alpha}}{\alpha+\beta}\frac{1-\left(\alpha+\beta\right)\gamma}{\left(1-\alpha\gamma\right)\left(1-\beta\gamma\right)}\rightarrow1-\lambda,
\end{align}
where $\sim$ assumes $\alpha,\beta,\epsilon$ small and $\rightarrow$
sends $\alpha,\beta,\epsilon$ to zero from above. That is, we have
$\lim_{\alpha,\beta,\epsilon\rightarrow0^{+}}\Pr\left(X_{\alpha,\beta}\leq\epsilon\right)=1-\lambda$.
Noting that $B\left(x;\alpha,\beta\right)=B\left(\alpha,\beta\right)-B\left(1-x;\beta,\alpha\right)$,
we next have
\begin{align}
\Pr\left(X_{\alpha,\beta}\geq1-\epsilon\right) & =1-\frac{B\left(1-\epsilon;\alpha,\beta\right)}{B\left(\alpha,\beta\right)}=\frac{B\left(\epsilon;\beta,\alpha\right)\Gamma\left(\alpha+\beta\right)}{\Gamma\left(\alpha\right)\Gamma\left(\beta\right)}\\
 & \sim\frac{\alpha\epsilon^{\beta}}{\alpha+\beta}\frac{1-\left(\alpha+\beta\right)\gamma}{\left(1-\alpha\gamma\right)\left(1-\beta\gamma\right)}\rightarrow\lambda,
\end{align}
where $\sim$ assumes $\alpha,\beta,\epsilon$ small and $\rightarrow$
sends $\alpha,\beta,\epsilon$ to zero from above. That is, we have
$\lim_{\alpha,\beta,\epsilon\rightarrow0^{+}}\Pr\left(X_{\alpha,\beta}\geq1-\epsilon\right)=\lambda$

For part \ref{enu:propXtoNorm} we assume, without loss of generality,
that $\alpha,\beta\in\left\{ 1,2,\ldots\right\} $. With $\xi_{i}\stackrel{\mathrm{iid}}{\sim}\mathrm{Exp}\left(1\right)$,
$1\leq i\leq\alpha+\beta$, let 
\begin{equation}
G\coloneqq\sum_{i=1}^{\alpha}\xi_{i}\sim\mathrm{Gamma}\left(\alpha,1\right)\textrm{ and }G'\coloneqq\sum_{i=\alpha+1}^{\alpha+\beta}\xi_{i}\sim\mathrm{Gamma}\left(\beta,1\right),
\end{equation}
so that $G$ and $G'$ are independent, and $X_{\alpha,\beta}\stackrel{\mathscr{L}}{=}\frac{G}{G+G'}\sim\mathrm{Beta}\left(\alpha,\beta\right)$,
implying that $\sqrt{\alpha+\beta}\left(X_{\alpha,\beta}-\nicefrac{\alpha}{\left(\alpha+\beta\right)}\right)$
\begin{align}
 & \stackrel{\mathscr{L}}{=}\sqrt{\alpha+\beta}\left(\frac{\sum_{i=1}^{\alpha}\xi_{i}-\frac{\alpha}{\alpha+\beta}\sum_{i=1}^{\alpha+\beta}\xi_{i}}{\sum_{i=1}^{\alpha+\beta}\xi_{i}}\right)\\
 & =\frac{\frac{\sqrt{\alpha+\beta}}{\alpha+\beta}\frac{\beta}{\alpha+\beta}\sum_{i=1}^{\alpha}\xi_{i}-\frac{\sqrt{\alpha+\beta}}{\alpha+\beta}\frac{\alpha}{\alpha+\beta}\sum_{i=\alpha+1}^{\alpha+\beta}\xi_{i}}{\frac{1}{\alpha+\beta}\sum_{i=1}^{\alpha+\beta}\xi_{i}}\label{eq:full-expr}\\
 & =\frac{\sqrt{\frac{\alpha}{\alpha+\beta}}\frac{\beta}{\alpha+\beta}\frac{1}{\sqrt{\alpha}}\sum_{i=1}^{\alpha}\left(\xi_{i}-1\right)-\sqrt{\frac{\beta}{\alpha+\beta}}\frac{\alpha}{\alpha+\beta}\frac{1}{\sqrt{\beta}}\sum_{i=\alpha+1}^{\alpha+\beta}\left(\xi_{i}-1\right)}{\frac{1}{\alpha+\beta}\sum_{i=1}^{\alpha+\beta}\xi_{i}}\label{eq:combined}
\end{align}
The result follows from the Strong Law of Large Numbers (SLLN), the
Central Limit Theorem (CLT), the independence of the two terms in
the numerator of (\ref{eq:combined}), and Slutsky's theorem.
\end{proof}
\limitX*
\begin{proof}
(\ref{eq:Xto1}), (\ref{eq:Xto0}), and (\ref{eq:XtoBern}) follow
from Proposition \ref{prop:limitab} and Propositions \ref{prop:Xto}.\ref{enu:propXto1},
\ref{prop:Xto}.\ref{enu:propXto0}, and \ref{prop:Xto}.\ref{enu:propXtoBern}.
(\ref{eq:XtoHalf}) and (\ref{eq:XtoPhi}) use (\ref{eq:stanXtoN1})
and (\ref{eq:stanXtoN2}), the triangle inequality, and Chebyshev's
inequality. (\ref{eq:stanXtoN1}) and (\ref{eq:stanXtoN2}) use Propositions
\ref{prop:limitab} and \ref{prop:Xto}.\ref{enu:propXtoNorm} and
the following. First, (\ref{eq:stanZtoN1}) implies that $\mathrm{Var}\left(Z_{\rho,\delta}\right)\sim\nicefrac{1}{2\pi\rho^{2}}$,
as $\rho\rightarrow\infty$, which then---using (\ref{eq:alpha3})
and (\ref{eq:beta3})---implies that $\sqrt{\nicefrac{2}{\pi}}\sqrt{a_{\rho,\delta}+b_{\rho,\delta}}\sim\rho$,
as $\rho\rightarrow\infty$. Then,
\begin{align}
 & \lim_{\rho\rightarrow\infty}\rho\left(X_{a_{\rho,\delta},b_{\rho,\delta}}-\Phi\left(\nicefrac{-\delta}{\sqrt{\rho^{2}+1}}\right)\right)\\
 & \qquad\qquad=\sqrt{\nicefrac{2}{\pi}}\lim_{\rho\rightarrow\infty}\sqrt{a_{\rho,\delta}+b_{\rho,\delta}}\left(X_{a_{\rho,\delta},b_{\rho,\delta}}-\nicefrac{a_{\rho,\delta}}{\left(a_{\rho,\delta}+b_{\rho,\delta}\right)}\right)\\
 & \qquad\qquad=\sqrt{\nicefrac{2}{\pi}}\,\mathcal{N}\left(0,\nicefrac{1}{4}\right)=\mathcal{N}\left(0,\nicefrac{1}{2\pi}\right)
\end{align}
because $\lim_{\rho\rightarrow\infty}\nicefrac{a_{\rho,\delta}}{\left(a_{\rho,\delta}+b_{\rho,\delta}\right)}=\lim_{\rho\rightarrow\infty}\Phi\left(\nicefrac{-\delta}{\sqrt{\rho^{2}+1}}\right)=\nicefrac{1}{2}$,
which gives (\ref{eq:stanXtoN1}). 

In a similar way, (\ref{eq:stanZtoN2}) implies that $\mathrm{Var}\left(Z_{\rho,\delta}\right)\sim\left(\nicefrac{\phi\left(r\right)}{\rho}\right)^{2}$,
as $\rho,\left|\delta\right|\rightarrow\infty$ while keeping $\nicefrac{\delta}{\rho}=r$
fixed, which then---using (\ref{eq:alpha3}) and (\ref{eq:beta3})---implies
that
\begin{equation}
\hat{\lim}\left.\sqrt{a_{\rho,\delta}+b_{\rho,\delta}}\right/\rho=\left.\sqrt{\Phi\left(-r\right)\Phi\left(r\right)}\right/\phi\left(r\right),
\end{equation}
where $\hat{\lim}$ is the limit that sends $\rho,\left|\delta\right|\rightarrow\infty$
while keeping $\nicefrac{\delta}{\rho}=r$ fixed. Then, 
\begin{align}
 & \hat{\lim}\,\rho\left(X_{a_{\rho,\delta},b_{\rho,\delta}}-\Phi\left(\nicefrac{-\delta}{\sqrt{\rho^{2}+1}}\right)\right)\\
 & \qquad=\frac{\phi\left(r\right)}{\sqrt{\Phi\left(-r\right)\Phi\left(r\right)}}\hat{\lim}\,\sqrt{a_{\rho,\delta}+b_{\rho,\delta}}\left(X_{a_{\rho,\delta},b_{\rho,\delta}}-\nicefrac{a_{\rho,\delta}}{\left(a_{\rho,\delta}+b_{\rho,\delta}\right)}\right)\\
 & \qquad=\frac{\phi\left(r\right)}{\sqrt{\Phi\left(-r\right)\Phi\left(r\right)}}\,\mathcal{N}\left(0,\Phi\left(-r\right)\Phi\left(r\right)\right)=\mathcal{N}\left(0,\phi\left(r\right)^{2}\right)
\end{align}
because $\hat{\lim}\,\nicefrac{a_{\rho,\delta}}{\left(a_{\rho,\delta}+b_{\rho,\delta}\right)}=\hat{\lim}\,\Phi\left(\nicefrac{-\delta}{\sqrt{\rho^{2}+1}}\right)=\Phi\left(-r\right)$,
which gives (\ref{eq:stanXtoN2}).
\end{proof}

\section{Proof of Theorem \ref{thm:RONO} \label{sec:Derivations-for-=0000A73.1}}

We begin by stating Lemma \ref{lem:diffSmall} and Propositions \ref{prop:densRho0}--\ref{prop:densRhoAbsDeltaInf},
which underpin the proof of Theorem \ref{thm:RONO}.
\begin{lem}
\label{lem:diffSmall}Fix $\alpha,\beta>0$. As $x\rightarrow\infty$,
$\left(1+\nicefrac{\alpha}{x^{4}}+\mathcal{O}\left(\nicefrac{1}{x^{6}}\right)\right)^{-\beta x^{2}}-1=\mathcal{O}\left(\nicefrac{1}{x^{2}}\right)$
and $\left(1+\nicefrac{\alpha}{x^{2}}+\mathcal{O}\left(\nicefrac{1}{x^{4}}\right)\right)^{-\beta x^{2}}-\mathrm{e}^{-\alpha\beta}=\mathcal{O}\left(\nicefrac{1}{x^{2}}\right)$.
\end{lem}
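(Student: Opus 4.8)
The plan is to recognise this as a refined, second-order version of the elementary limit $\left(1+\nicefrac{a}{n}\right)^{n}\to e^{a}$, and to prove it by writing $\left(1+u\right)^{v}=\exp\!\left(v\log\left(1+u\right)\right)$ and then Taylor-expanding $\log\left(1+\cdot\right)$ about $0$ and $\exp$ about its relevant value, keeping track of exactly which error terms survive at order $\nicefrac{1}{x^{2}}$.

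For the first identity, I would set $u_{x}\coloneqq\nicefrac{\alpha}{x^{4}}+\mathcal{O}\left(\nicefrac{1}{x^{6}}\right)$, which tends to $0$, so that for $x$ large $u_{x}>-1$ and $\log\left(1+u_{x}\right)=u_{x}+\mathcal{O}\left(u_{x}^{2}\right)=\nicefrac{\alpha}{x^{4}}+\mathcal{O}\left(\nicefrac{1}{x^{6}}\right)$. Multiplying by the exponent $-\beta x^{2}$ gives $-\beta x^{2}\log\left(1+u_{x}\right)=-\nicefrac{\alpha\beta}{x^{2}}+\mathcal{O}\left(\nicefrac{1}{x^{4}}\right)$, which itself tends to $0$; applying $e^{w}=1+\mathcal{O}\left(w\right)$ as $w\to0$ yields $\left(1+u_{x}\right)^{-\beta x^{2}}=1+\mathcal{O}\left(\nicefrac{1}{x^{2}}\right)$ (indeed with leading term $-\nicefrac{\alpha\beta}{x^{2}}$), which is the claim.

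For the second identity, put $w_{x}\coloneqq\nicefrac{\alpha}{x^{2}}+\mathcal{O}\left(\nicefrac{1}{x^{4}}\right)\to0$, so $\log\left(1+w_{x}\right)=w_{x}-\tfrac{1}{2}w_{x}^{2}+\mathcal{O}\left(w_{x}^{3}\right)=\nicefrac{\alpha}{x^{2}}+\mathcal{O}\left(\nicefrac{1}{x^{4}}\right)$. The one place to be careful is that the quadratic term $-\tfrac{1}{2}w_{x}^{2}=\mathcal{O}\left(\nicefrac{1}{x^{4}}\right)$, once multiplied by $-\beta x^{2}$, contributes a term of exact order $\nicefrac{1}{x^{2}}$: it cannot be discarded when bounding the error, though it does not change the leading constant. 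Hence $-\beta x^{2}\log\left(1+w_{x}\right)=-\alpha\beta+\mathcal{O}\left(\nicefrac{1}{x^{2}}\right)$, and using the local estimate $e^{-\alpha\beta+t}=e^{-\alpha\beta}\left(1+\mathcal{O}\left(t\right)\right)$ at $t=\mathcal{O}\left(\nicefrac{1}{x^{2}}\right)$ gives $\left(1+w_{x}\right)^{-\beta x^{2}}=e^{-\alpha\beta}+\mathcal{O}\left(\nicefrac{1}{x^{2}}\right)$.

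There is no substantive obstacle; the argument is pure bookkeeping of Landau symbols. The two points requiring attention are (i) identifying, after multiplication by $x^{2}$, which powers of $u_{x}$ or $w_{x}$ matter --- only the linear term in the first identity, the linear \emph{and} quadratic terms in the second --- and (ii) converting an additive $\mathcal{O}\left(\nicefrac{1}{x^{2}}\right)$ perturbation of the exponent into an additive $\mathcal{O}\left(\nicefrac{1}{x^{2}}\right)$ perturbation of the value via the Lipschitz behaviour of $\exp$ near a fixed point. The positivity hypothesis $\alpha,\beta>0$ plays no essential role; the same computation is valid for arbitrary real $\alpha,\beta$.
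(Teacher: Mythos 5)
Your proposal is correct and follows essentially the same route as the paper's proof: take logarithms, expand $\log\left(1+u\right)$ as a series, multiply by $-\beta x^{2}$ to locate the surviving orders, and exponentiate back, converting the $\mathcal{O}\left(\nicefrac{1}{x^{2}}\right)$ perturbation of the exponent into an $\mathcal{O}\left(\nicefrac{1}{x^{2}}\right)$ perturbation of the value. Your explicit remark that the quadratic term of the logarithm contributes at exact order $\nicefrac{1}{x^{2}}$ in the second identity is a point the paper handles implicitly through its full series expansion, but there is no substantive difference in the argument.
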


\begin{proof}
For the first one and $x$ large enough we have
\begin{align}
\log\left\{ \left(1+\nicefrac{\alpha}{x^{4}}+\mathcal{O}\left(\nicefrac{1}{x^{6}}\right)\right)^{-\beta x^{2}}\right\}  & =-\beta x^{2}\log\left(1+\nicefrac{\alpha}{x^{4}}+\mathcal{O}\left(\nicefrac{1}{x^{6}}\right)\right)\\
 & =-\beta x^{2}\sum_{k=1}^{\infty}\frac{\left(-1\right)^{k+1}\left(\nicefrac{\alpha}{x^{4}}+\mathcal{O}\left(\nicefrac{1}{x^{6}}\right)\right)^{k}}{k}\\
 & =\nicefrac{-\alpha\beta}{x^{2}}+\mathcal{O}\left(\nicefrac{1}{x^{4}}\right),\\
\left(1+\nicefrac{\alpha}{x^{4}}+\mathcal{O}\left(\nicefrac{1}{x^{6}}\right)\right)^{-\beta x^{2}}-1 & =\exp\left(\nicefrac{-\alpha\beta}{x^{2}}+\mathcal{O}\left(\nicefrac{1}{x^{4}}\right)\right)-1\\
 & =\sum_{k=1}^{\infty}\frac{\left(\nicefrac{-\alpha\beta}{x^{2}}+\mathcal{O}\left(\nicefrac{1}{x^{4}}\right)\right)^{k}}{k!}=\mathcal{O}\left(\nicefrac{1}{x^{2}}\right),
\end{align}
where the result holds as $x\rightarrow\infty$. Now, for the second
one and $x$ large enough, we have
\begin{align}
\log\left\{ \left(1+\nicefrac{\alpha}{x^{2}}+\mathcal{O}\left(\nicefrac{1}{x^{4}}\right)\right)^{-\beta x^{2}}\right\}  & =-\beta x^{2}\log\left(1+\nicefrac{\alpha}{x^{2}}+\mathcal{O}\left(\nicefrac{1}{x^{4}}\right)\right)\\
 & =-\beta x^{2}\sum_{k=1}^{\infty}\frac{\left(-1\right)^{k+1}\left(\nicefrac{\alpha}{x^{2}}+\mathcal{O}\left(\nicefrac{1}{x^{4}}\right)\right)^{k}}{k}\\
 & =-\alpha\beta+\mathcal{O}\left(\nicefrac{1}{x^{2}}\right),\\
\left(1+\nicefrac{\alpha}{x^{2}}+\mathcal{O}\left(\nicefrac{1}{x^{4}}\right)\right)^{-\beta x^{2}}-\mathrm{e}^{-\alpha\beta} & =\mathrm{e}^{-\alpha\beta}\left(\exp\left(\mathcal{O}\left(\nicefrac{1}{x^{2}}\right)\right)-1\right)\\
 & =\mathrm{e}^{-\alpha\beta}\sum_{k=1}^{\infty}\frac{\mathcal{O}\left(\nicefrac{1}{x^{2k}}\right)}{k!}=\mathcal{O}\left(\nicefrac{1}{x^{2}}\right),
\end{align}
where the result holds as $x\rightarrow\infty$. This completes the
proof.
\end{proof}
\begin{prop}
\label{prop:densRho0}For $z\in\left(0,1\right)$ we have $\lim_{\rho\rightarrow0^{+}}\left|f_{\rho,\delta}\left(z\right)-g_{a_{\rho,\delta},b_{\rho,\delta}}\left(z\right)\right|=0$.
\end{prop}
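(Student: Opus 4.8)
The plan is to show that, for each fixed $z\in(0,1)$, \emph{both} densities vanish as $\rho\rightarrow0^{+}$, whence the claim follows from the triangle inequality $\left|f_{\rho,\delta}(z)-g_{a_{\rho,\delta},b_{\rho,\delta}}(z)\right|\leq f_{\rho,\delta}(z)+g_{a_{\rho,\delta},b_{\rho,\delta}}(z)$.

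For $f_{\rho,\delta}$ I would read off (\ref{eq:densityExplicit}): for fixed $z$ the bracketed quantity $(\rho^{2}-1)\Phi^{-1}(z)^{2}+2\rho\delta\Phi^{-1}(z)+\delta^{2}$ tends to $\delta^{2}-\Phi^{-1}(z)^{2}$ as $\rho\rightarrow0^{+}$, so the exponential factor converges to the finite constant $\exp\{\tfrac{1}{2}(\Phi^{-1}(z)^{2}-\delta^{2})\}$; multiplying by the prefactor $\rho\rightarrow0$ gives $f_{\rho,\delta}(z)\rightarrow0$.

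For $g_{a_{\rho,\delta},b_{\rho,\delta}}$ I would invoke Proposition \ref{prop:limitab} (whose $\rho\rightarrow0^{+}$ case is itself a restatement of (\ref{eq:ZtoBern})), which gives $a_{\rho,\delta}\rightarrow0$ and $b_{\rho,\delta}\rightarrow0$. The numerator $z^{a_{\rho,\delta}-1}(1-z)^{b_{\rho,\delta}-1}$ then tends to the finite value $z^{-1}(1-z)^{-1}$. For the normalizing constant, once $b_{\rho,\delta}<1$ we have $(1-y)^{b_{\rho,\delta}-1}\geq1$ on $(0,1)$, so
\begin{equation}
B(a_{\rho,\delta},b_{\rho,\delta})=\int_{0}^{1}y^{a_{\rho,\delta}-1}(1-y)^{b_{\rho,\delta}-1}\,dy\;\geq\;\int_{0}^{1}y^{a_{\rho,\delta}-1}\,dy=\frac{1}{a_{\rho,\delta}}\;\longrightarrow\;\infty
\end{equation}
(equivalently, $B(\alpha,\beta)=\Gamma(\alpha)\Gamma(\beta)/\Gamma(\alpha+\beta)$ together with $x\Gamma(x)\rightarrow1$ as $x\rightarrow0^{+}$ yields $B(a_{\rho,\delta},b_{\rho,\delta})\sim a_{\rho,\delta}^{-1}+b_{\rho,\delta}^{-1}\rightarrow\infty$). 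Hence $g_{a_{\rho,\delta},b_{\rho,\delta}}(z)\rightarrow0$, and combining with the previous paragraph finishes the argument.

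I expect essentially no obstacle here; the only substantive input is Proposition \ref{prop:limitab}, already established. The single point deserving a line of care is that the blow-up of $B(a_{\rho,\delta},b_{\rho,\delta})$ must not hinge on the \emph{relative} rates at which $a_{\rho,\delta}$ and $b_{\rho,\delta}$ approach $0$; the integral lower bound above settles this, since it uses only $a_{\rho,\delta}\rightarrow0$ together with $b_{\rho,\delta}<1$ eventually.
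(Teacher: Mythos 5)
Your proposal is correct and follows essentially the same route as the paper: both arguments show that $f_{\rho,\delta}\left(z\right)$ and $g_{a_{\rho,\delta},b_{\rho,\delta}}\left(z\right)$ each tend to zero pointwise, the former because the exponent in (\ref{eq:densityExplicit}) stays bounded while the prefactor $\rho$ vanishes, the latter because $a_{\rho,\delta},b_{\rho,\delta}\rightarrow0^{+}$ (Proposition \ref{prop:limitab}) forces the normalizing constant to blow up while the kernel $z^{a-1}\left(1-z\right)^{b-1}$ stays bounded. The only cosmetic difference is that you bound $B\left(a_{\rho,\delta},b_{\rho,\delta}\right)$ from below by the elementary integral estimate $\nicefrac{1}{a_{\rho,\delta}}$, whereas the paper reaches the same conclusion via the asymptotic $\Gamma\left(x\right)\sim\nicefrac{1}{x}-\gamma$, writing $g_{a,b}\left(z\right)\sim\frac{ab}{a+b}\cdot\frac{1}{z\left(1-z\right)}\rightarrow0$; both are valid and neither depends on the relative rates at which $a_{\rho,\delta}$ and $b_{\rho,\delta}$ vanish.
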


\begin{proof}
We see from (\ref{eq:densityExplicit}) that 
\begin{equation}
\lim_{\rho\rightarrow0^{+}}f_{\rho,\delta}\left(z\right)=\lim_{\rho\rightarrow0^{+}}\rho\exp\left[\nicefrac{1}{2}\left(\Phi^{-1}\left(z\right)+\delta\right)\left(\Phi^{-1}\left(z\right)-\delta\right)\right]=0\label{eq:fto0rhoSmall}
\end{equation}
because $\left(\Phi^{-1}\left(z\right)+\delta\right)\left(\Phi^{-1}\left(z\right)-\delta\right)$
is finite. For the beta distribution, $\rho\rightarrow0^{+}$ implies
that $a_{\rho,\delta},b_{\rho,\delta}\rightarrow0^{+}$ (Proposition
\ref{prop:limitab}). While $\lim_{\rho\rightarrow0^{+}}\nicefrac{a_{\rho,\delta}}{\left(a_{\rho,\delta}+b_{\rho,\delta}\right)}=\lim_{\rho\rightarrow0^{+}}\Phi\left(\nicefrac{-\delta}{\sqrt{\rho^{2}+1}}\right)=\Phi\left(-\delta\right)\in\left(0,1\right)$,
we have $\lim_{\rho\rightarrow0^{+}}\nicefrac{a_{\rho,\delta}b_{\rho,\delta}}{\left(a_{\rho,\delta}+b_{\rho,\delta}\right)}=0$.
Using $\Gamma\left(x\right)\sim\nicefrac{1}{x}-\gamma$, for Euler's
constant\textbf{ }$\gamma\approx0.577216$, as $x\rightarrow0^{+}$,
gives
\begin{equation}
\lim_{\rho\rightarrow0^{+}}g_{a_{\rho,\delta},b_{\rho,\delta}}\left(z\right)=\lim_{\rho\rightarrow0^{+}}\frac{a_{\rho,\delta}b_{\rho,\delta}}{a_{\rho,\delta}+b_{\rho,\delta}}\frac{1-\left(a_{\rho,\delta}+b_{\rho,\delta}\right)\gamma}{\left(1-a_{\rho,\delta}\gamma\right)\left(1-b_{\rho,\delta}\gamma\right)}\frac{1}{z\left(1-z\right)}=0\label{eq:bto0rhoSmall}
\end{equation}
because $\nicefrac{1}{z\left(1-z\right)}$ is finite. Combining (\ref{eq:fto0rhoSmall})
and (\ref{eq:bto0rhoSmall}) gives the result.
\end{proof}
\begin{prop}
\label{prop:densRhoInf}For $z\in\left(0,1\right)$ we have $\lim_{\rho\rightarrow\infty}\left|f_{\rho,\delta}\left(z\right)-g_{a_{\rho,\delta},b_{\rho,\delta}}\left(z\right)\right|=0$.
\end{prop}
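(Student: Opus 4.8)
The plan is to split on whether $z=\tfrac12$. As $\rho\to\infty$ both $Z_{\rho,\delta}$ and $X_{a_{\rho,\delta},b_{\rho,\delta}}$ concentrate at $\tfrac12$: for $Z$ this is (\ref{eq:ZtoHalf}), and for $X$ this follows from Proposition \ref{prop:limitab}, which gives $a_{\rho,\delta},b_{\rho,\delta}\to\infty$ with $a_{\rho,\delta}/(a_{\rho,\delta}+b_{\rho,\delta})=\Phi(-\delta/\sqrt{\rho^{2}+1})\to\tfrac12$ by (\ref{eq:alpha3})--(\ref{eq:beta3}). Consequently, at every fixed $z\ne\tfrac12$ both densities vanish (and their difference trivially tends to $0$), whereas at $z=\tfrac12$ both densities diverge like $\rho\,\mathrm{e}^{-\delta^{2}/2}$, so the comparison there is delicate.

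For $z\in(0,1)\setminus\{\tfrac12\}$, put $w:=\Phi^{-1}(z)\ne0$. From (\ref{eq:densityExplicit}), $f_{\rho,\delta}(z)=\rho\exp\{-\tfrac12[(\rho^{2}-1)w^{2}+2\rho\delta w+\delta^{2}]\}$; since $w^{2}>0$ the exponent behaves like $-\tfrac12 w^{2}\rho^{2}\to-\infty$, which outpaces $\log\rho$, so $f_{\rho,\delta}(z)\to0$. For the beta density, write $N:=a_{\rho,\delta}+b_{\rho,\delta}$ and $m:=a_{\rho,\delta}/N$. Factoring $g_{a_{\rho,\delta},b_{\rho,\delta}}$ through its mode $\approx m$ and applying Stirling to the beta function yields a bound of the form $g_{a_{\rho,\delta},b_{\rho,\delta}}(z)\le C\sqrt{N}\,\mathrm{e}^{-cN}$, valid once $z$ is bounded away from $m$, where $c>0$ is controlled from below by a binary relative-entropy term between $z$ and $m$ (bounded below because $m\to\tfrac12\ne z$). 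Since $N\to\infty$ this forces $g_{a_{\rho,\delta},b_{\rho,\delta}}(z)\to0$, hence $|f_{\rho,\delta}(z)-g_{a_{\rho,\delta},b_{\rho,\delta}}(z)|\to0$. This is the same ``both densities vanish'' scheme as in Proposition \ref{prop:densRho0}; only here the beta piece needs the concentration estimate in place of a shrinking prefactor.

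The case $z=\tfrac12$ is the main obstacle. Since $\Phi^{-1}(\tfrac12)=0$, (\ref{eq:densityExplicit}) gives $f_{\rho,\delta}(\tfrac12)=\rho\,\mathrm{e}^{-\delta^{2}/2}$ exactly. For the beta density, $g_{a_{\rho,\delta},b_{\rho,\delta}}(\tfrac12)=2^{2-N}\Gamma(N)/(\Gamma(a_{\rho,\delta})\Gamma(b_{\rho,\delta}))$, and a two-term Stirling expansion gives
\[
g_{a_{\rho,\delta},b_{\rho,\delta}}(\tfrac12)=\sqrt{\tfrac{2N}{\pi}}\;\sqrt{4m(1-m)}\;\exp\bigl\{N\bigl(H(m)-\log2\bigr)\bigr\}\bigl(1+O(1/N)\bigr),
\]
where $H(p):=-p\log p-(1-p)\log(1-p)$. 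I then feed in the exact values $m=\Phi(-\delta/\sqrt{\rho^{2}+1})$ and, from (\ref{eq:intraClassCor}), $N=\Phi(-\delta/\sqrt{\rho^{2}+1})\Phi(\delta/\sqrt{\rho^{2}+1})/\mathrm{Var}(Z_{\rho,\delta})-1$, together with the sharpened asymptotic $\mathrm{Var}(Z_{\rho,\delta})=\tfrac{1}{2\pi\rho^{2}}+O(\rho^{-4})$ as $\rho\to\infty$. The latter I get by Taylor-expanding $\mathbb{E}Z_{\rho,\delta}=\Phi(-\delta/\sqrt{\rho^{2}+1})$ and $\mathbb{E}Z_{\rho,\delta}^{2}=\mathbb{E}[\Phi((X_{0}-\delta)/\rho)^{2}]$ in powers of $1/\rho$ and checking that the $1/\rho$ and $1/\rho^{3}$ contributions cancel in $\mathbb{E}Z_{\rho,\delta}^{2}-(\mathbb{E}Z_{\rho,\delta})^{2}$ (Theorems \ref{thm:theGenMean}--\ref{thm:theGenVar} supply the moments). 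This gives $N=\tfrac{\pi}{2}\rho^{2}+O(1)$, hence $\sqrt{2N/\pi}=\rho+O(1/\rho)$, $\sqrt{4m(1-m)}=1+O(\rho^{-2})$, and $N(H(m)-\log2)=-\tfrac{\delta^{2}}{2}+O(\rho^{-2})$; multiplying through, $g_{a_{\rho,\delta},b_{\rho,\delta}}(\tfrac12)=\rho\,\mathrm{e}^{-\delta^{2}/2}+O(1/\rho)$, so $|f_{\rho,\delta}(\tfrac12)-g_{a_{\rho,\delta},b_{\rho,\delta}}(\tfrac12)|=O(1/\rho)\to0$.

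The genuinely hard work is the bookkeeping in this last step: one must expand $\mathrm{Var}(Z_{\rho,\delta})$ accurately enough to pin $N=a_{\rho,\delta}+b_{\rho,\delta}$ down to an $O(1)$ error (an $O(\rho)$ error would be fatal, since the prefactor $\sqrt{2N/\pi}$ is being multiplied by the diverging quantity $\rho\,\mathrm{e}^{-\delta^{2}/2}$ rather than compared to it in ratio), and to carry all Stirling remainders down to $O(1/\rho)$. I note in passing that the downstream use of this proposition, namely the dominated-convergence argument behind Theorem \ref{thm:RONO}, only needs convergence of $f_{\rho,\delta}-g_{a_{\rho,\delta},b_{\rho,\delta}}$ at a.e.\ $z$, so the $z=\tfrac12$ analysis, while required for the statement as written, is inessential for the applications.
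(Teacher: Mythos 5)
Your proof is correct and follows essentially the same route as the paper's: split on $z=\nicefrac{1}{2}$ versus $z\ne\nicefrac{1}{2}$, kill both densities off the center via the exponential concentration of $\mathrm{Beta}\left(a_{\rho,\delta},b_{\rho,\delta}\right)$ (your relative-entropy bound is the same computation as the paper's Stirling expansion (\ref{eq:useDefa3b3})--(\ref{eq:betaDensRhoBig})), and at $z=\nicefrac{1}{2}$ match $f_{\rho,\delta}\left(\nicefrac{1}{2}\right)=\rho\mathrm{e}^{-\delta^{2}/2}$ against a second-order Stirling expansion of $g_{a_{\rho,\delta},b_{\rho,\delta}}\left(\nicefrac{1}{2}\right)$, which is exactly the paper's (\ref{eq:logbdensRhoBig})--(\ref{eq:bdensRhoBig}) combined with Lemma \ref{lem:diffSmall}. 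The one genuine refinement on your side is deriving $\mathrm{Var}\left(Z_{\rho,\delta}\right)=\nicefrac{1}{2\pi\rho^{2}}+\mathcal{O}\left(\rho^{-4}\right)$ (the $\rho^{-3}$ cancellation checks out) so that $N=a_{\rho,\delta}+b_{\rho,\delta}=\nicefrac{\pi\rho^{2}}{2}+\mathcal{O}\left(1\right)$, which makes explicit an error control that the paper handles only implicitly through the ratio asymptotic (\ref{eq:stanZtoN1}) and the $\mathcal{O}\left(\nicefrac{1}{x^{2}}\right)$ statement of Lemma \ref{lem:diffSmall}; your observation that an $o\left(\rho\right)$ error in $N$ is what the argument actually requires is accurate and worth keeping.
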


\begin{proof}
Putting $\bar{z}\coloneqq1-z$, $\mu\coloneqq\nicefrac{a_{\rho,\delta}}{\left(a_{\rho,\delta}+b_{\rho,\delta}\right)}=\Phi\left(\nicefrac{-\delta}{\sqrt{\rho^{2}+1}}\right)\eqqcolon1-\bar{\mu}$,
$a\coloneqq a_{\rho,\delta}$, $b\coloneqq b_{\rho,\delta}$, and
$v\coloneqq\mathrm{Var}\left(Z_{\rho,\delta}\right)$, we note that
$\log g_{a,b}\left(z\right)$
\begin{align}
 & =\log\Gamma\left(a+b\right)-\log\Gamma\left(a\right)-\log\Gamma\left(b\right)+\left(a-1\right)\log z+\left(b-1\right)\log\bar{z}\\
 & \sim a\log\nicefrac{z}{\mu}+b\log\nicefrac{\bar{z}}{\bar{\mu}}+\nicefrac{1}{2}\log\nicefrac{\mu b}{2\pi}-\log z\bar{z}\label{eq:useStirling}\\
 & =\left(\nicefrac{\mu\bar{\mu}}{v}-1\right)\left\{ \mu\log\nicefrac{z}{\mu}+\bar{\mu}\log\nicefrac{\bar{z}}{\bar{\mu}}\right\} +\nicefrac{1}{2}\log\left(\nicefrac{\mu\bar{\mu}}{2\pi}\left(\nicefrac{\mu\bar{\mu}}{v}-1\right)\right)-\log z\bar{z},\label{eq:useDefa3b3}
\end{align}
where (\ref{eq:useStirling}) uses Stirling's approximation because
$a_{\rho,\delta},b_{\rho,\delta}\rightarrow\infty$ by Proposition
\ref{prop:limitab}, and (\ref{eq:useDefa3b3}) uses (\ref{eq:alpha3})
and (\ref{eq:beta3}). 

If $z\in\left.\left(0,1\right)\right\backslash \left\{ \nicefrac{1}{2}\right\} $,
then $\lim_{\rho\rightarrow\infty}f_{\rho,\delta}\left(z\right)=0$
as the $\rho^{2}$ term dominates (\ref{eq:densityExplicit})'s exponent
and $-\Phi^{-1}\left(z\right)^{2}<0$. Now, for the beta distribution
we have 
\begin{equation}
\lim_{\rho\rightarrow\infty}\log g_{a_{\rho,\delta},b_{\rho,\delta}}\left(z\right)=\lim_{\rho\rightarrow\infty}\left\{ \nicefrac{\pi\rho^{2}}{4}\log4z\bar{z}+\log\nicefrac{\rho}{4}-\log z\bar{z}\right\} =-\infty,\label{eq:betaDensRhoBig}
\end{equation}
where the first equality in (\ref{eq:betaDensRhoBig}) uses (\ref{eq:useDefa3b3})
and (\ref{eq:stanZtoN1}), and the second equality uses the asymptotic
dominance of $\rho^{2}$ over $\log\rho$ and $4z\bar{z}\in\left(0,1\right)$,
or $\log4z\bar{z}<0$, when $z\in\left.\left(0,1\right)\right\backslash \left\{ \nicefrac{1}{2}\right\} $.
This proves the result when $z\in\left.\left(0,1\right)\right\backslash \left\{ \nicefrac{1}{2}\right\} $.

For $z=\nicefrac{1}{2}$, note that $f_{\rho,\delta}\left(\nicefrac{1}{2}\right)=\rho\exp\left(\nicefrac{-\delta^{2}}{2}\right)$
by (\ref{eq:densityExplicit}). Further, we have 
\begin{align}
\log g_{a_{\rho,\delta},b_{\rho,\delta}}\left(\nicefrac{1}{2}\right) & \sim\nicefrac{-\pi\rho^{2}}{2}\left[\mu\log\left(2\mu\right)+\bar{\mu}\log\left(2\bar{\mu}\right)\right]+\log\rho,\label{eq:logbdensRhoBig}
\end{align}
where we use (\ref{eq:useDefa3b3}), (\ref{eq:stanZtoN1}), and $\lim_{\rho\rightarrow\infty}\mu=\lim_{\rho\rightarrow\infty}\bar{\mu}=\nicefrac{1}{2}$.
This implies that
\begin{align}
g_{a_{\rho,\delta},b_{\rho,\delta}}\left(\nicefrac{1}{2}\right) & \sim\rho\left[\left(2\mu\right)^{\mu}\left(2\bar{\mu}\right)^{\bar{\mu}}\right]^{\nicefrac{-\pi\rho^{2}}{2}}.\label{eq:bdensRhoBig}
\end{align}
Then, noting that $\left[2\Phi\left(-x\right)\right]^{\Phi\left(-x\right)}\left[2\Phi\left(x\right)\right]^{\Phi\left(x\right)}=1+\nicefrac{x^{2}}{\pi}+\mathcal{O}\left(x^{4}\right)$
by Taylor series expansion, we have
\begin{align}
g_{a_{\rho,\delta},b_{\rho,\delta}}\left(\nicefrac{1}{2}\right) & \sim\rho\left[1+\frac{\nicefrac{\delta^{2}}{\pi}}{\rho^{2}+1}+\mathcal{O}\left(\nicefrac{1}{\rho^{4}}\right)\right]^{\nicefrac{-\pi\rho^{2}}{2}}.
\end{align}
Applying the second statement in Lemma \ref{lem:diffSmall}, we have
$f_{\rho,\delta}\left(\nicefrac{1}{2}\right)-g_{a_{\rho,\delta},b_{\rho,\delta}}\left(\nicefrac{1}{2}\right)\sim$
\begin{equation}
\rho\left\{ \exp\left(\nicefrac{-\delta^{2}}{2}\right)-\left[1+\frac{\nicefrac{\delta^{2}}{\pi}}{\rho^{2}+1}+\mathcal{O}\left(\nicefrac{1}{\rho^{4}}\right)\right]^{\nicefrac{-\pi\rho^{2}}{2}}\right\} =\mathcal{O}\left(\nicefrac{1}{\rho}\right).
\end{equation}
This gives the result when $z=\nicefrac{1}{2}$ and completes the
proof.
\end{proof}
\begin{prop}
\label{prop:densAbsDeltaInf}For $z\in\left(0,1\right)$ we have $\lim_{\left|\delta\right|\rightarrow\infty}\left|f_{\rho,\delta}\left(z\right)-g_{a_{\rho,\delta},b_{\rho,\delta}}\left(z\right)\right|=0$.
\end{prop}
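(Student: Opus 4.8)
The plan is to prove the two densities vanish \emph{separately} in the limit: $f_{\rho,\delta}(z)\to 0$ and $g_{a_{\rho,\delta},b_{\rho,\delta}}(z)\to 0$ as $\delta\to\infty$, then transfer the statement to $\delta\to-\infty$ by symmetry, and conclude with the triangle inequality.

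\textbf{Step 1 (the transformed normal).} Completing the square in the exponent of (\ref{eq:densityExplicit}) rewrites $f_{\rho,\delta}(z)=\rho\, e^{\Phi^{-1}(z)^2/2}\, e^{-(\delta+\rho\Phi^{-1}(z))^2/2}$. For fixed $z\in(0,1)$ the quantity $\Phi^{-1}(z)$ is finite, so $(\delta+\rho\Phi^{-1}(z))^2\to\infty$ and hence $f_{\rho,\delta}(z)\to 0$ as $|\delta|\to\infty$.

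\textbf{Step 2 (the beta density).} By Proposition \ref{prop:limitab}, $a_{\rho,\delta}\to 0$ and $b_{\rho,\delta}\to\infty$ as $\delta\to\infty$; abbreviate $a:=a_{\rho,\delta}$, $b:=b_{\rho,\delta}$. I would estimate $B(a,b)=\Gamma(a)\Gamma(b)/\Gamma(a+b)$ from below. Since $\Gamma$ is increasing on $[3/2,\infty)$, once $a<1/2$ and $b>2$ we have $\Gamma(b)<\Gamma(a+b)<\Gamma(b+1)=b\,\Gamma(b)$, so $\Gamma(b)/\Gamma(a+b)>1/b$; and $\Gamma(a)\sim 1/a$ as $a\to0^+$ gives $\Gamma(a)\geq 1/(2a)$ for $a$ small. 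Hence $B(a,b)\geq 1/(2ab)$ for all large $\delta$, so that
\[
g_{a,b}(z)=\frac{z^{a-1}(1-z)^{b-1}}{B(a,b)}\leq 2ab\, z^{a-1}(1-z)^{b-1}\leq \frac{2ab\,(1-z)^{b-1}}{z},
\]
using $z^{a-1}\leq z^{-1}$ for $a\in(0,1)$. As $\delta\to\infty$ we have $a\to 0$ and $b(1-z)^{b-1}\to 0$ (exponential decay of $(1-z)^{b}$ beats the linear factor), so the right side tends to $0$.

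\textbf{Step 3 ($\delta\to-\infty$ and conclusion).} Combining Steps 1--2 with $|f_{\rho,\delta}(z)-g_{a_{\rho,\delta},b_{\rho,\delta}}(z)|\leq f_{\rho,\delta}(z)+g_{a_{\rho,\delta},b_{\rho,\delta}}(z)$ settles the case $\delta\to\infty$. For $\delta\to-\infty$ I would invoke the symmetries $f_{\rho,-\delta}(z)=f_{\rho,\delta}(1-z)$ (noted after (\ref{eq:densityExplicit})), the identities $a_{\rho,-\delta}=b_{\rho,\delta}$ and $b_{\rho,-\delta}=a_{\rho,\delta}$ that follow from (\ref{eq:alpha3})--(\ref{eq:beta3}) together with $\mathrm{Var}(Z_{\rho,-\delta})=\mathrm{Var}(Z_{\rho,\delta})$, and $g_{\beta,\alpha}(z)=g_{\alpha,\beta}(1-z)$; these reduce the case $\delta\to-\infty$ evaluated at $z$ to the case $\delta\to\infty$ evaluated at $1-z\in(0,1)$, already handled. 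The main obstacle is Step 2: in the regime $a\to 0$, $b\to\infty$ both factors of $B(a,b)=\Gamma(a)\Gamma(b)/\Gamma(a+b)$ are degenerate and must be controlled simultaneously, but once the crude bound $B(a,b)\gtrsim 1/(ab)$ is secured the exponential decay of $(1-z)^{b}$ overwhelms everything else.
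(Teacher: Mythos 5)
Your proposal is correct and follows essentially the same route as the paper's proof: both show $f_{\rho,\delta}(z)\to0$ from the dominant $-\delta^{2}/2$ term in the exponent of (\ref{eq:densityExplicit}), and both drive $g_{a_{\rho,\delta},b_{\rho,\delta}}(z)\to0$ from Proposition \ref{prop:limitab} ($a_{\rho,\delta}\to0^{+}$, $b_{\rho,\delta}\to\infty$) together with the decay of $(1-z)^{b_{\rho,\delta}}$, the paper via the asymptotic $B(a,b)\sim\Gamma(a)/b^{a}$ on the log scale and you via the equivalent elementary bound $B(a,b)\gtrsim1/(ab)$. Your explicit symmetry reduction of the case $\delta\rightarrow-\infty$ (using $a_{\rho,-\delta}=b_{\rho,\delta}$ and $g_{\beta,\alpha}(z)=g_{\alpha,\beta}(1-z)$) just makes precise what the paper leaves as ``a similar argument.''
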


\begin{proof}
Note that $\lim_{\left|\delta\right|\rightarrow\infty}f_{\rho,\delta}\left(z\right)=0$
because, in this setting, the $\nicefrac{-\delta^{2}}{2}<0$ term
dominates (\ref{eq:densityExplicit})'s exponent. For the beta setting
we focus on $\delta\rightarrow\infty$, so that $a_{\rho,\delta}\rightarrow0^{+}$
and $b_{\rho,\delta}\rightarrow\infty$ (see Proposition \ref{prop:limitab}).
Noting that $B\left(a_{\rho,\delta},b_{\rho,\delta}\right)\sim\nicefrac{\Gamma\left(a_{\rho,\delta}\right)}{b_{\rho,\delta}^{a_{\rho,\delta}}}$
in this setting gives $\lim_{\delta\rightarrow\infty}\log g_{a_{\rho,\delta},b_{\rho,\delta}}\left(z\right)$
\begin{align}
 & =\lim_{\delta\rightarrow\infty}\left\{ a_{\rho,\delta}\log b_{\rho,\delta}-\log\Gamma\left(a_{\rho,\delta}\right)+\left(a_{\rho,\delta}-1\right)\log z+\left(b_{\rho,\delta}-1\right)\log\left(1-z\right)\right\} \nonumber \\
 & =\lim_{\delta\rightarrow\infty}\left\{ a_{\rho,\delta}\log b_{\rho,\delta}+\log a_{\rho,\delta}+\left(a_{\rho,\delta}-1\right)\log z+\left(b_{\rho,\delta}-1\right)\log\left(1-z\right)\right\} \label{eq:GammaSmall}\\
 & =\lim_{\delta\rightarrow\infty}\left\{ a_{\rho,\delta}\log b_{\rho,\delta}+\log a_{\rho,\delta}+\left(b_{\rho,\delta}-1\right)\log\left(1-z\right)\right\} -\log z\label{eq:intermed}\\
 & =-\infty,\label{eq:betaDeltaLim}
\end{align}
where (\ref{eq:GammaSmall}) uses $\Gamma\left(x\right)\sim\nicefrac{1}{x}$,
as $x\rightarrow0^{+}$, and (\ref{eq:betaDeltaLim}) follows because
the last two bracketed terms in (\ref{eq:intermed}) dominate the
first one ($z\in\left(0,1\right)$ gives $\log\left(1-z\right)<0$).
This implies that $\lim_{\delta\rightarrow\infty}g_{a_{\rho,\delta},b_{\rho,\delta}}\left(z\right)=0$.
An argument similar to that above shows that $\lim_{\delta\rightarrow-\infty}g_{a_{\rho,\delta},b_{\rho,\delta}}\left(z\right)=0$,
completing the proof.
\end{proof}
\begin{prop}
\label{prop:densRhoAbsDeltaInf}For $z\in\left(0,1\right)$ we have
$\lim_{\rho,\left|\delta\right|\rightarrow\infty}\left|f_{\rho,\delta}\left(z\right)-g_{a_{\rho,\delta},b_{\rho,\delta}}\left(z\right)\right|=0$,
where the limit keeps $\nicefrac{\delta}{\rho}=r$ fixed.
\end{prop}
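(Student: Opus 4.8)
The plan is to follow the template of Propositions~\ref{prop:densRhoInf} and~\ref{prop:densAbsDeltaInf}, using the fact (Proposition~\ref{prop:limitab}) that both $a_{\rho,\delta}$ and $b_{\rho,\delta}$ diverge to $+\infty$ in this regime. First I would rewrite $f_{\rho,\delta}$: substituting $\delta=r\rho$ into~(\ref{eq:densityExplicit}) and completing the square in $\Phi^{-1}(z)$ gives
\[
f_{\rho,\delta}(z)=\rho\exp\!\left(\frac{\Phi^{-1}(z)^{2}}{2}-\frac{\rho^{2}}{2}\bigl(\Phi^{-1}(z)+r\bigr)^{2}\right),
\]
which exposes the dichotomy that organizes the proof: if $z\neq\Phi(-r)$ then $\Phi^{-1}(z)+r\neq0$, the $\rho^{2}$ term dominates $\log\rho$, and $f_{\rho,\delta}(z)\to0$; whereas if $z=\Phi(-r)$ (always an interior point) the square vanishes and $f_{\rho,\delta}(z)=\rho\,e^{r^{2}/2}\to\infty$. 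The second value is the concentration point of $Z_{\rho,\delta}$ (cf.~(\ref{eq:ZtoPhi})), and it is where the real work lies.

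For $z\neq\Phi(-r)$ I would mimic Proposition~\ref{prop:densRhoInf}. Write $\mu\coloneqq\Phi\bigl(\nicefrac{-\delta}{\sqrt{\rho^{2}+1}}\bigr)=\nicefrac{a_{\rho,\delta}}{(a_{\rho,\delta}+b_{\rho,\delta})}$, $\bar\mu\coloneqq1-\mu$, $v\coloneqq\mathrm{Var}(Z_{\rho,\delta})$, and $N\coloneqq a_{\rho,\delta}+b_{\rho,\delta}=\nicefrac{\mu\bar\mu}{v}-1$. Stirling's formula (as in the derivation of~(\ref{eq:useDefa3b3}), now keeping the error explicit) yields
\[
\log g_{a_{\rho,\delta},b_{\rho,\delta}}(z)=N\,h(\mu,z)+\frac{1}{2}\log\frac{\mu\bar\mu N}{2\pi}-\log\bigl(z(1-z)\bigr)+\mathcal{O}\bigl(\nicefrac{1}{\rho^{2}}\bigr),
\]
where $h(\mu,z)\coloneqq\mu\log\nicefrac{z}{\mu}+\bar\mu\log\nicefrac{1-z}{\bar\mu}\le0$, with equality only when $\mu=z$ (Gibbs' inequality, i.e.\ nonnegativity of the Bernoulli Kullback--Leibler divergence). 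Since $\mu\to\Phi(-r)\neq z$ while, by~(\ref{eq:stanZtoN2}), $N$ is of order $\rho^{2}$, the term $N\,h(\mu,z)\to-\infty$ overwhelms the $\mathcal{O}(\log N)$ piece, so $g_{a_{\rho,\delta},b_{\rho,\delta}}(z)\to0$; combined with $f_{\rho,\delta}(z)\to0$ this disposes of the case.

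For $z=\Phi(-r)$ both densities have size $\rho$, so I must match them to \emph{relative} precision $o(\nicefrac{1}{\rho})$, not merely to leading order. The new input is a sharpening of~(\ref{eq:stanZtoN2}): expanding $Z_{\rho,\delta}=\Phi(W)$ with $W\sim\mathcal{N}(-r,\nicefrac{1}{\rho^{2}})$ to fourth order (or refining the integral in Theorem~\ref{thm:theGenVar}) gives $v=\nicefrac{\phi(r)^{2}}{\rho^{2}}+\mathcal{O}(\nicefrac{1}{\rho^{4}})$ together with $\epsilon\coloneqq\mu-\Phi(-r)=\nicefrac{r\phi(r)}{2\rho^{2}}+\mathcal{O}(\nicefrac{1}{\rho^{4}})$. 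Then $h\bigl(\mu,\Phi(-r)\bigr)=-\nicefrac{\epsilon^{2}}{2z(1-z)}+\mathcal{O}(\epsilon^{3})=\mathcal{O}(\nicefrac{1}{\rho^{4}})$, hence $N\,h(\mu,z)=\mathcal{O}(\nicefrac{1}{\rho^{2}})$; and $\mu\bar\mu=z(1-z)+\mathcal{O}(\nicefrac{1}{\rho^{2}})$ reduces $\frac{1}{2}\log\frac{\mu\bar\mu N}{2\pi}-\log\bigl(z(1-z)\bigr)$ to $-\frac{1}{2}\log(2\pi v)+\mathcal{O}(\nicefrac{1}{\rho^{2}})$. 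Feeding these, along with $\log f_{\rho,\delta}(z)=\log\rho+\nicefrac{r^{2}}{2}$ and the identity $2\pi\phi(r)^{2}e^{r^{2}}=1$, into the difference collapses it to
\[
\log g_{a_{\rho,\delta},b_{\rho,\delta}}(z)-\log f_{\rho,\delta}(z)=-\frac{1}{2}\log\!\frac{v\rho^{2}}{\phi(r)^{2}}+\mathcal{O}\bigl(\nicefrac{1}{\rho^{2}}\bigr)=\mathcal{O}\bigl(\nicefrac{1}{\rho^{2}}\bigr),
\]
so that $g_{a_{\rho,\delta},b_{\rho,\delta}}(z)-f_{\rho,\delta}(z)=f_{\rho,\delta}(z)\bigl(e^{\mathcal{O}(1/\rho^{2})}-1\bigr)=\rho\,e^{r^{2}/2}\cdot\mathcal{O}(\nicefrac{1}{\rho^{2}})=\mathcal{O}(\nicefrac{1}{\rho})\to0$.

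I expect the main obstacle to be precisely this $z=\Phi(-r)$ branch, as in the $z=\nicefrac{1}{2}$ case of Proposition~\ref{prop:densRhoInf}: because $f_{\rho,\delta}$ blows up at the concentration point, leading-order asymptotics are useless and one must (i)~control $\mathrm{Var}(Z_{\rho,\delta})$ to relative error $\mathcal{O}(\nicefrac{1}{\rho^{2}})$ and (ii)~track the $\nicefrac{1}{12x}$ Stirling corrections all the way through — bookkeeping that does not arise in the $z\neq\Phi(-r)$ branch, nor in Propositions~\ref{prop:densRho0} and~\ref{prop:densAbsDeltaInf}, where one of the two densities simply vanishes.
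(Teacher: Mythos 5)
Your proposal is correct and follows essentially the same route as the paper's proof: the same case split on $z\ne\Phi\left(-r\right)$ versus $z=\Phi\left(-r\right)$, with both densities vanishing off the peak and a second-order expansion at the peak showing the difference is $\mathcal{O}\left(\nicefrac{1}{\rho}\right)$. Your Kullback--Leibler term $N\,h\left(\mu,z\right)$ is exactly the logarithm of the paper's bracketed product $\left[\nicefrac{\mu}{\Phi\left(-r\right)}\right]^{\mu}\left[\nicefrac{\bar{\mu}}{\Phi\left(r\right)}\right]^{\bar{\mu}}$ raised to the power $-N$, and your $\epsilon=\nicefrac{r\phi\left(r\right)}{2\rho^{2}}+\mathcal{O}\left(\nicefrac{1}{\rho^{4}}\right)$ is the paper's $\phi\left(r\right)\Delta$ with $\Delta\sim\nicefrac{r}{2\rho^{2}}$, so the two computations coincide (with your version, if anything, being more explicit about the relative-error bookkeeping the paper's $\hat{\sim}$ notation leaves implicit).
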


\begin{proof}
If $z\in\left.\left(0,1\right)\right\backslash \left\{ \Phi\left(-r\right)\right\} $,
then $\hat{\lim}\,f_{\rho,\delta}\left(z\right)=0$, where $\hat{\lim}$
sends $\rho,\left|\delta\right|$ to infinity while keeping $\nicefrac{\delta}{\rho}=r$
fixed. To see this, note that
\begin{equation}
f_{\rho,\delta}\left(z\right)=\rho\exp\left\{ \nicefrac{-1}{2}\left[\rho^{2}\left(\Phi^{-1}\left(z\right)+r\right)^{2}-\Phi^{-1}\left(z\right)^{2}\right]\right\} \label{eq:fWhenRFixed}
\end{equation}
when $\delta=r\rho$. The $\rho^{2}$ term dominates the exponent
and its coefficient is negative. We now show that $\hat{\lim}\,g_{a_{\rho,\delta},b_{\rho,\delta}}\left(z\right)=0$
when $z\in\left.\left(0,1\right)\right\backslash \left\{ \Phi\left(-r\right)\right\} $.
In this setting note that $\hat{\lim}\,\log g_{a_{\rho,\delta},b_{\rho,\delta}}\left(z\right)$
\begin{equation}
=\hat{\lim}\left\{ p\bar{p}\left(\nicefrac{\rho}{\phi\left(r\right)}\right)^{2}\left(p\log\nicefrac{z}{p}+\bar{p}\log\nicefrac{\bar{z}}{\bar{p}}\right)+\log\left(\nicefrac{\rho p\bar{p}}{\sqrt{2\pi}\phi\left(r\right)}\right)\right\} -\log z\bar{z},\label{eq:logBetaRhoDelta}
\end{equation}
where we use (\ref{eq:useDefa3b3}), (\ref{eq:stanZtoN2}), $\bar{z}\coloneqq1-z$,
and $p\coloneqq\Phi\left(-r\right)\eqqcolon1-\bar{p}$. Then, noting
that $z\ne p\coloneqq\Phi\left(-r\right)$, we have $p\log\nicefrac{z}{p}+\bar{p}\log\nicefrac{\bar{z}}{\bar{p}}<p\left(1-\nicefrac{z}{p}\right)+\bar{p}\left(1-\nicefrac{\bar{z}}{\bar{p}}\right)=0$,
so that the coefficient of the asymptotically-dominant $\rho^{2}$
term in (\ref{eq:logBetaRhoDelta}) is negative, giving $\hat{\lim}\,\log g_{a_{\rho,\delta},b_{\rho,\delta}}\left(z\right)=-\infty$.
We are done when $z\in\left.\left(0,1\right)\right\backslash \left\{ \Phi\left(-r\right)\right\} $.

If $z=\Phi\left(-r\right)$, we have $f_{\rho,\delta}\left(\Phi\left(-r\right)\right)=\rho\exp\left(\nicefrac{r^{2}}{2}\right)$
by (\ref{eq:fWhenRFixed}). In what follows we put $\mu\coloneqq\Phi\left(\nicefrac{-\delta}{\sqrt{\rho^{2}+1}}\right)\eqqcolon1-\bar{\mu}$.
Now, for the beta distribution, (\ref{eq:useDefa3b3}), (\ref{eq:stanZtoN2}),
and $\hat{\lim}\,\mu=\Phi\left(-r\right)=1-\hat{\lim}\,\bar{\mu}$
imply that
\begin{equation}
g_{a_{\rho,\delta},b_{\rho,\delta}}\left(\Phi\left(-r\right)\right)\hat{\sim}\,\rho\exp\left(\nicefrac{r^{2}}{2}\right)\left\{ \left[\frac{\mu}{\Phi\left(-r\right)}\right]^{\mu}\left[\frac{\bar{\mu}}{\Phi\left(r\right)}\right]^{\bar{\mu}}\right\} ^{-\frac{\Phi\left(-r\right)\Phi\left(r\right)\rho^{2}}{\phi\left(r\right)^{2}}},
\end{equation}
where $f_{1}\left(\rho,\delta\right)\hat{\sim}f_{2}\left(\rho,\delta\right)$
indicates that $\hat{\lim}\,\nicefrac{f_{1}\left(\rho,\delta\right)}{f_{2}\left(\rho,\delta\right)}=1$.
Noting that
\begin{equation}
\left[\frac{\Phi\left(-x\right)}{\Phi\left(-r\right)}\right]^{\Phi\left(-x\right)}\left[\frac{\Phi\left(x\right)}{\Phi\left(r\right)}\right]^{\Phi\left(x\right)}=1+\frac{\phi\left(r\right)^{2}\Delta^{2}}{2\Phi\left(-r\right)\Phi\left(r\right)}+\mathcal{O}\left(\Delta^{3}\right)
\end{equation}
for $\Delta\coloneqq r-x$ (by Taylor series expansion) and setting
$x\coloneqq\nicefrac{\delta}{\sqrt{\rho^{2}+1}}$, we obtain $g_{a_{\rho,\delta},b_{\rho,\delta}}\left(\Phi\left(-r\right)\right)$
\begin{equation}
\hat{\sim}\,\rho\exp\left(\nicefrac{r^{2}}{2}\right)\left[1+\frac{\phi\left(r\right)^{2}\Delta^{2}}{2\Phi\left(-r\right)\Phi\left(r\right)}+\mathcal{O}\left(\Delta^{3}\right)\right]^{-\frac{\Phi\left(-r\right)\Phi\left(r\right)\rho^{2}}{\phi\left(r\right)^{2}}}.
\end{equation}
Lemma \ref{lem:diffSmall} completes the proof if $\Delta=\nicefrac{C}{\rho^{2}}+o\left(\nicefrac{1}{\rho^{2}}\right)$
as $\rho,\left|\delta\right|\rightarrow\infty$ while $\nicefrac{\delta}{\rho}=r$
remains fixed. To see that $\Delta=\nicefrac{C}{\rho^{2}}+o\left(\nicefrac{1}{\rho^{2}}\right)$
in this setting, note that
\begin{align}
\Delta & =r-\frac{\delta}{\sqrt{\rho^{2}+1}}=r\left(1-\frac{\rho}{\sqrt{\rho^{2}+1}}\right)=r\left(\frac{\sqrt{1+\nicefrac{1}{\rho^{2}}}-1}{\sqrt{1+\nicefrac{1}{\rho^{2}}}}\right)\label{eq:useDefR}\\
 & \hat{\sim}\,r\left(\sqrt{1+\nicefrac{1}{\rho^{2}}}-1\right)=r\left[\nicefrac{1}{\left(2\rho^{2}\right)}+\mathcal{O}\left(\nicefrac{1}{\rho^{4}}\right)\right]=\nicefrac{C}{\rho^{2}}+o\left(\nicefrac{1}{\rho^{2}}\right),\label{eq:useTaySqrt}
\end{align}
where (\ref{eq:useDefR}) uses $\delta=r\rho$, and (\ref{eq:useTaySqrt})
uses $\sqrt{y}=1+\nicefrac{1}{2}\left(y-1\right)+\mathcal{O}\left(\left(y-1\right)^{2}\right)$,
a Taylor series expansion. Using Lemma \ref{lem:diffSmall}, we have
$f_{\rho,\delta}\left(\nicefrac{1}{2}\right)-g_{a_{\rho,\delta},b_{\rho,\delta}}\left(\nicefrac{1}{2}\right)\hat{\sim}$
\begin{equation}
\rho\mathrm{e}^{\nicefrac{r^{2}}{2}}\left\{ 1-\left[1+\frac{r\phi\left(r\right)^{2}}{4\rho^{4}\Phi\left(-r\right)\Phi\left(r\right)}+\mathcal{O}\left(\nicefrac{1}{\rho^{6}}\right)\right]^{-\frac{\Phi\left(-r\right)\Phi\left(r\right)\rho^{2}}{\phi\left(r\right)^{2}}}\right\} =\mathcal{O}\left(\nicefrac{1}{\rho}\right).
\end{equation}
This gives the result when $z=\Phi\left(-r\right)$ and  completes
the proof.
\end{proof}
\limitRONO*
\begin{proof}
For $B_{z}\sim\mathrm{Binomial}\left(n,z\right)$, (\ref{eq:integralPR0})
and (\ref{eq:integralApprox}) give $\left|\Pr\left(R_{0}=k\right)-p_{\rho,\delta}\left(k\right)\right|$
\begin{align}
 & =\left|\int_{0}^{1}\Pr\left(B_{z}=k-1\right)\left\{ f_{\rho,\delta}\left(z\right)-g_{a_{\rho,\delta},b_{\rho,\delta}}\left(z\right)\right\} dz\right|\\
 & \leq\int_{0}^{1}\left|f_{\rho,\delta}\left(z\right)-g_{a_{\rho,\delta},b_{\rho,\delta}}\left(z\right)\right|dz\label{eq:applyDCT}\\
 & \leq\int_{0}^{1}f_{\rho,\delta}\left(z\right)dz+\int_{0}^{1}g_{a_{\rho,\delta},b_{\rho,\delta}}\left(z\right)dz=2,\label{eq:useTriIneq}
\end{align}
where (\ref{eq:applyDCT}) and (\ref{eq:useTriIneq}) use $\Pr\left(B_{z}=k-1\right)\leq1$
and the triangle inequality. The DCT applies to (\ref{eq:applyDCT})
by (\ref{eq:useTriIneq}). Propositions \ref{prop:densRho0}--\ref{prop:densRhoAbsDeltaInf}
complete the proof.
\end{proof}

\section{Proof of Proposition \ref{prop:otherRanks} \label{sec:Derivations-for-=0000A73.2}}

\otherRanks*
\begin{proof}
Starting with (\ref{eq:withR0}), we have $\Pr\left(R_{0}=j_{0},R_{i_{1}}=j_{1},\ldots,R_{i_{m}}=j_{m}\right)$
\begin{align}
 & =\Pr\left(\left.R_{i_{1}}=j_{1},\ldots,R_{i_{m}}=j_{m}\right|R_{0}=j_{0}\right)\Pr\left(R_{0}=j_{0}\right)\\
 & =\frac{\Pr\left(R_{0}=j_{0}\right)}{n\left(n-1\right)\cdots\left(n-m+1\right)},\label{eq:probWithR0}
\end{align}
where (\ref{eq:probWithR0}) follows because, conditional on $\left\{ R_{0}=j_{0}\right\} $,
the $\left(R_{i_{1}},R_{i_{2}},\ldots,R_{i_{m}}\right)$ are uniformly
distributed on $\left\{ \mathbf{j}\in\left[n+1\right]^{m}:j_{i}\textrm{ distinct and }j_{i}\ne j_{0},1\leq i\leq m\right\} $,
for $\left[k\right]\coloneqq\left\{ 1,2,\ldots,k\right\} $. This
implies (\ref{eq:noR0}): $\Pr\left(R_{i_{1}}=j_{1},R_{i_{2}}=j_{2},\ldots,R_{i_{m}}=j_{m}\right)$
\begin{align}
 & =\sum_{j_{0}\notin\left\{ j_{1},j_{2},\ldots,j_{m}\right\} }\Pr\left(R_{0}=j_{0},R_{i_{1}}=j_{1},\ldots,R_{i_{m}}=j_{m}\right)\\
 & =\sum_{j_{0}\notin\left\{ j_{1},j_{2},\ldots,j_{m}\right\} }\frac{\Pr\left(R_{0}=j_{0}\right)}{n\left(n-1\right)\cdots\left(n-m+1\right)}=\frac{1-\sum_{k=1}^{m}\Pr\left(R_{0}=j_{k}\right)}{n\left(n-1\right)\cdots\left(n-m+1\right)},\label{eq:probNoR0}
\end{align}
where (\ref{eq:probNoR0}) uses (\ref{eq:probWithR0}). Turning to
(\ref{eq:meanR1}) we have
\begin{align}
\mathbb{E}R_{1} & =\sum_{k=1}^{n+1}\frac{k}{n}\left[1-\Pr\left(R_{0}=k\right)\right]=\frac{1}{n}\sum_{k=1}^{n+1}k-\frac{1}{n}\sum_{k=1}^{n+1}k\Pr\left(R_{0}=k\right)\label{eq:usePR1}\\
 & =\frac{\tilde{n}_{2}}{2n}-\frac{\mathbb{E}R_{0}}{n}=\frac{n+3}{2}-\Phi\left(\nicefrac{-\delta}{\sqrt{\rho^{2}+1}}\right)=\frac{n+1}{2}+\Phi\left(\nicefrac{\delta}{\sqrt{\rho^{2}+1}}\right),\label{eq:useMeanR0}
\end{align}
where (\ref{eq:usePR1}) uses (\ref{eq:probNoR0}) with $m=1$ and
(\ref{eq:useMeanR0}) uses $\mathbb{E}R_{0}=1+n\Phi\left(\nicefrac{-\delta}{\sqrt{\rho^{2}+1}}\right)$
from (\ref{eq:meanR0}) and $\tilde{n}_{2}\coloneqq\left(n+1\right)\left(n+2\right)$.
For (\ref{eq:varR1}) we then have
\begin{align}
\mathrm{Var}\left(R_{1}\right) & =\sum_{k=1}^{n+1}\frac{k^{2}}{n}\left[1-\Pr\left(R_{0}=k\right)\right]-\left(\mathbb{E}R_{1}\right)^{2}\label{eq:usePR1-1}\\
 & =\frac{1}{n}\sum_{k=1}^{n+1}k^{2}-\frac{1}{n}\sum_{k=1}^{n+1}k^{2}\Pr\left(R_{0}=k\right)-\left(\mathbb{E}R_{1}\right)^{2}\\
 & =\frac{\left(2n+3\right)\tilde{n}_{2}}{6n}-\frac{\mathrm{Var}\left(R_{0}\right)+\left(\mathbb{E}R_{0}\right)^{2}}{n}-\left(\mathbb{E}R_{1}\right)^{2}\\
 & =\frac{n^{2}-1}{12}+n\Phi\left(\nicefrac{-\delta}{\sqrt{\rho^{2}+1}}\right)\Phi\left(\nicefrac{\delta}{\sqrt{\rho^{2}+1}}\right)-\left(n-1\right)\mathrm{Var}\left(Z_{\rho,\delta}\right),\label{eq:usePropRes}
\end{align}
where (\ref{eq:usePR1-1}) uses (\ref{eq:probNoR0}) with $m=1$ and
(\ref{eq:usePropRes}) uses (\ref{eq:meanR0}), (\ref{eq:varR0}),
and (\ref{eq:useMeanR0}). For (\ref{eq:covR0R1}) we have $\mathrm{Cov}\left(R_{0},R_{1}\right)=\mathbb{E}\left[R_{0}R_{1}\right]-\mathbb{E}R_{0}\mathbb{E}R_{1}$,
where $\mathbb{E}\left[R_{0}R_{1}\right]$
\begin{align}
 & =\sum_{i=1}^{n+1}\sum_{j\ne i}\frac{ij\Pr\left(R_{0}=i\right)}{n}=\frac{1}{n}\sum_{i=1}^{n+1}i\Pr\left(R_{0}=i\right)\sum_{j\ne i}j\label{eq:usePR1-2}\\
 & =\frac{1}{n}\sum_{i=1}^{n+1}i\Pr\left(R_{0}=i\right)\left(\frac{\tilde{n}_{2}}{2}-i\right)=\frac{\tilde{n}_{2}\mathbb{E}R_{0}}{2n}-\frac{\mathrm{Var}\left(R_{0}\right)+\left(\mathbb{E}R_{0}\right)^{2}}{n}
\end{align}
and (\ref{eq:usePR1-2}) uses (\ref{eq:probWithR0}) with $m=1$.
Using (\ref{eq:meanR0}), (\ref{eq:varR0}), and (\ref{eq:useMeanR0})
we then have 
\begin{equation}
\mathrm{Cov}\left(R_{0},R_{1}\right)=-\Phi\left(\nicefrac{-\delta}{\sqrt{\rho^{2}+1}}\right)\Phi\left(\nicefrac{\delta}{\sqrt{\rho^{2}+1}}\right)-\left(n-1\right)\mathrm{Var}\left(Z_{\rho,\delta}\right).
\end{equation}
Finally, for (\ref{eq:covR1R2}) we have $\mathrm{Cov}\left(R_{1},R_{2}\right)=\mathbb{E}\left[R_{1}R_{2}\right]-\left(\mathbb{E}R_{1}\right)^{2}$,
where $\mathbb{E}\left[R_{1}R_{2}\right]$
\begin{align}
 & =\frac{1}{n\left(n-1\right)}\sum_{i=1}^{n+1}\sum_{j\ne i}ij\left[1-\Pr\left(R_{0}=i\right)-\Pr\left(R_{0}=j\right)\right]\label{eq:usePR1-3}\\
 & =\frac{1}{n\left(n-1\right)}\sum_{i=1}^{n+1}i\left\{ \left[1-\Pr\left(R_{0}=i\right)\right]\sum_{j\ne i}j-\sum_{j\ne i}j\Pr\left(R_{0}=j\right)\right\} \\
 & =\frac{1}{n\left(n-1\right)}\sum_{i=1}^{n+1}i\left\{ \left[1-\Pr\left(R_{0}=i\right)\right]\left(\frac{\tilde{n}_{2}}{2}-i\right)-\mathbb{E}R_{0}+i\Pr\left(R_{0}=i\right)\right\} \\
 & =\frac{1}{n\left(n-1\right)}\left\{ \frac{\tilde{n}_{2}^{2}}{4}-\frac{\left(2n+3\right)\tilde{n}_{2}}{6}-\tilde{n}_{2}\mathbb{E}R_{0}+2\mathrm{Var}\left(R_{0}\right)+2\left(\mathbb{E}R_{0}\right)^{2}\right\} 
\end{align}
and (\ref{eq:usePR1-3}) uses (\ref{eq:probNoR0}) with $m=2$. Using
(\ref{eq:meanR0}), (\ref{eq:varR0}), and (\ref{eq:useMeanR0}) we
then have 
\begin{equation}
\mathrm{Cov}\left(R_{1},R_{2}\right)=-\frac{n+1}{12}-\Phi\left(\nicefrac{-\delta}{\sqrt{\rho^{2}+1}}\right)\Phi\left(\nicefrac{\delta}{\sqrt{\rho^{2}+1}}\right)+2\mathrm{Var}\left(Z_{\rho,\delta}\right),
\end{equation}
which gives (\ref{eq:covR1R2}) and completes the proof.
\end{proof}

\section{Proof of Theorem \ref{thm:RankAsymp} \label{sec:Derivations-for-=0000A73.3}}

We state and prove Proposition \ref{prop:betaBinomLim} and Lemma
\ref{lem:cardCrossProd}, which together underpin the proof of Theorem
\ref{thm:RankAsymp}.
\begin{prop}
\label{prop:betaBinomLim}For $Y_{n}\sim\mathrm{BetaBinomial}\left(n,\alpha,\beta\right)$
in (\ref{eq:betaBinom}) and $\lambda\coloneqq\lim\frac{\alpha}{\alpha+\beta}$:
\begin{enumerate}
\item \label{enu:opt1}When $\alpha,\beta\rightarrow0^{+}$, $Y_{n}\implies n\mathrm{Bernoulli}\left(\lambda\right)$
and $\iota\coloneqq\mathrm{Cor}\left(\xi_{1},\xi_{2}\right)\rightarrow1$.
\item \label{enu:opt2}When $\alpha,\beta\rightarrow\infty$, $Y_{n}\implies\mathrm{Binomial}\left(n,\lambda\right)$
and $\xi_{1},\xi_{2},\ldots,\xi_{n}\implies\textrm{i.i.d.}$
\end{enumerate}
\end{prop}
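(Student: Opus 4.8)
The plan is to route both claims through the limiting behaviour of the latent mixing variable $X_{\alpha,\beta}\sim\mathrm{Beta}\left(\alpha,\beta\right)$ already established in Proposition~\ref{prop:Xto}, and then transfer those limits across the conditional-binomial layer. Since $n$ is fixed, $Y_{n}$ lives on the finite set $\left\{0,1,\ldots,n\right\}$, so convergence in distribution is nothing more than convergence of the point masses. The workhorse is the conditioning identity that follows from (\ref{eq:betaBinom}),
\[
\Pr\left(Y_{n}=j\right)=\binom{n}{j}\,\mathbb{E}\!\left[X_{\alpha,\beta}^{\,j}\bigl(1-X_{\alpha,\beta}\bigr)^{n-j}\right],\qquad 0\le j\le n,
\]
together with its refinement $\Pr\left(\xi_{1}=e_{1},\ldots,\xi_{n}=e_{n}\right)=\mathbb{E}\bigl[X_{\alpha,\beta}^{\,k}(1-X_{\alpha,\beta})^{n-k}\bigr]$ for $\mathbf{e}\in\left\{0,1\right\}^{n}$ and $k\coloneqq\sum_{i}e_{i}$. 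Each integrand $x\mapsto x^{j}(1-x)^{n-j}$ is a polynomial, hence bounded and continuous on $\left[0,1\right]$, so whenever $X_{\alpha,\beta}$ converges in distribution we may pass the limit inside the expectation (by the portmanteau theorem, or equivalently dominated convergence applied to the conditional probabilities).

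For part~\ref{enu:opt1}, Proposition~\ref{prop:Xto}.\ref{enu:propXtoBern} gives $X_{\alpha,\beta}\implies\mathrm{Bernoulli}\left(\lambda\right)$ as $\alpha,\beta\to0^{+}$ with $\nicefrac{\alpha}{(\alpha+\beta)}\to\lambda$. If $B\sim\mathrm{Bernoulli}\left(\lambda\right)$, then $\mathbb{E}\bigl[B^{j}(1-B)^{n-j}\bigr]=\lambda\,\mathbf{1}_{\left\{j=n\right\}}+(1-\lambda)\,\mathbf{1}_{\left\{j=0\right\}}$, so the identity above forces $\Pr\left(Y_{n}=0\right)\to1-\lambda$, $\Pr\left(Y_{n}=n\right)\to\lambda$, and $\Pr\left(Y_{n}=j\right)\to0$ for $0<j<n$; this is precisely $Y_{n}\implies n\,\mathrm{Bernoulli}\left(\lambda\right)$. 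The companion claim $\iota\to1$ is immediate from $\iota=\nicefrac{1}{(\alpha+\beta+1)}$ and $\alpha+\beta\to0$.

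For part~\ref{enu:opt2}, Proposition~\ref{prop:Xto}.\ref{enu:propXtoNorm} gives $\sqrt{\alpha+\beta}\,\bigl(X_{\alpha,\beta}-\nicefrac{\alpha}{(\alpha+\beta)}\bigr)\implies\mathcal{N}\bigl(0,\lambda(1-\lambda)\bigr)$; since $\alpha+\beta\to\infty$ while $\nicefrac{\alpha}{(\alpha+\beta)}\to\lambda$, the scaled fluctuation vanishes, forcing $X_{\alpha,\beta}\longrightarrow\lambda$ in probability, hence $X_{\alpha,\beta}\implies\lambda$. Feeding this into the refined conditioning identity gives $\Pr\left(\xi_{1}=e_{1},\ldots,\xi_{n}=e_{n}\right)\to\lambda^{k}(1-\lambda)^{n-k}$, which is exactly the joint mass function of $n$ i.i.d.\ $\mathrm{Bernoulli}\left(\lambda\right)$ variates; equivalently $\Pr\left(Y_{n}=j\right)\to\binom{n}{j}\lambda^{j}(1-\lambda)^{n-j}$. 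Summing the indicators and applying the continuous mapping theorem then gives $Y_{n}=\sum_{i}\xi_{i}\implies\mathrm{Binomial}\left(n,\lambda\right)$, completing the argument.

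The only delicate point is that $Y_{n}$ is not a continuous function of $X_{\alpha,\beta}$, so the continuous mapping theorem cannot be invoked directly on the mixture; the step requiring care is routing the convergence of $X_{\alpha,\beta}$ through the finitely many bounded continuous test functions $x\mapsto x^{j}(1-x)^{n-j}$. Because $n$ is held fixed there is no uniformity to control, so once Proposition~\ref{prop:Xto} is in hand this step—and hence the whole proof—is routine.
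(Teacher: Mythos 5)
Your proof is correct, but it takes a genuinely different route from the paper's. The paper proves both parts by direct computation on the closed-form mass function: for $\alpha,\beta\rightarrow0^{+}$ it expands the gamma-function ratios using $\Gamma\left(x\right)\sim\nicefrac{1}{x}-\gamma$ and evaluates the three cases $j=0$, $1\leq j\leq n-1$, $j=n$ separately; for $\alpha,\beta\rightarrow\infty$ it applies Stirling's approximation to $\nicefrac{B\left(k+\alpha,n-k+\beta\right)}{B\left(\alpha,\beta\right)}$ and, for the asymptotic independence of the $\xi_{i}$, expands $\mathbb{E}\left[X_{\alpha,\beta}^{s}\left(1-X_{\alpha,\beta}\right)^{k-s}\right]$ via the binomial theorem and the raw moments $\mathbb{E}X_{\alpha,\beta}^{k-j}=\prod_{r=0}^{k-j-1}\nicefrac{\left(\alpha+r\right)}{\left(\alpha+\beta+r\right)}$. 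You instead observe that every relevant probability is $\mathbb{E}\left[h\left(X_{\alpha,\beta}\right)\right]$ for a polynomial (hence bounded continuous) test function $h$ on $\left[0,1\right]$, and transfer the already-established limits of $X_{\alpha,\beta}$ from Proposition \ref{prop:Xto} (parts \ref{enu:propXtoBern} and \ref{enu:propXtoNorm}, the latter yielding $X_{\alpha,\beta}\longrightarrow\lambda$ by Slutsky) through the portmanteau theorem. Your reduction is shorter, avoids all the Stirling and gamma-asymptotics bookkeeping, and makes the logical dependence on Proposition \ref{prop:Xto} explicit; its only cost is that it inherits that proposition's hypothesis $\lambda\in\left(0,1\right)$ (harmless here, since the application takes $\lambda=\Phi\left(-\delta\right)$) and gives no rate information, whereas the paper's computations expose the leading-order corrections. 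Both arguments correctly identify $\iota\rightarrow1$ from $\iota=\nicefrac{1}{\left(\alpha+\beta+1\right)}$, and your evaluation $\mathbb{E}\left[B^{j}\left(1-B\right)^{n-j}\right]=\lambda\mathbf{1}_{\left\{ j=n\right\} }+\left(1-\lambda\right)\mathbf{1}_{\left\{ j=0\right\} }$ for Bernoulli $B$ is the right closing step for part \ref{enu:opt1}.
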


\begin{proof}
We first consider the case $\alpha,\beta\rightarrow0^{+}$ and then
the case $\alpha,\beta\rightarrow\infty$. 
\begin{enumerate}
\item Note first that $\Gamma\left(x\right)\sim\nicefrac{1}{x}-\gamma$,
for Euler's constant\textbf{ }$\gamma\approx0.577216$, as $x\rightarrow0^{+}$.
We then have the following three cases:
\begin{align}
\Pr\left(Y_{n}=0\right) & =\binom{n}{0}\frac{\Gamma\left(n+\beta\right)}{\Gamma\left(n+\alpha+\beta\right)}\frac{\Gamma\left(\alpha+\beta\right)}{\Gamma\left(\beta\right)}\label{eq:firstBBinBern}\\
 & \stackrel[0^{+}]{\alpha,\beta}{\longrightarrow}\frac{\beta}{\alpha+\beta}\frac{1-\gamma\left(\alpha+\beta\right)}{1-\gamma\beta}\stackrel[0^{+}]{\alpha,\beta}{\longrightarrow}1-\lambda,\label{eq:useContGamma}
\end{align}
where (\ref{eq:useContGamma}) uses the continuity of $\Gamma$. Next,
for $1\leq k\leq n-1$, 
\begin{align}
\Pr\left(Y_{n}=k\right) & =\binom{n}{k}\frac{\Gamma\left(k+\alpha\right)\Gamma\left(n-k+\beta\right)}{\Gamma\left(n+\alpha+\beta\right)}\frac{\Gamma\left(\alpha+\beta\right)}{\Gamma\left(\alpha\right)\Gamma\left(\beta\right)}\\
 & \stackrel[0^{+}]{\alpha,\beta}{\longrightarrow}\frac{n}{k\left(n-k\right)}\frac{\alpha\beta}{\alpha+\beta}\frac{1-\gamma\left(\alpha+\beta\right)}{\left(1-\gamma\alpha\right)\left(1-\gamma\beta\right)}\stackrel[0^{+}]{\alpha,\beta}{\longrightarrow}0,\label{eq:useContGamma-1}
\end{align}
where (\ref{eq:useContGamma-1}) uses the continuity of $\Gamma$
and $\lim_{\alpha,\beta\rightarrow0^{+}}\nicefrac{\alpha\beta}{\left(\alpha+\beta\right)}=0$.
Finally, 
\begin{align}
\Pr\left(Y_{n}=n\right) & =\binom{n}{n}\frac{\Gamma\left(n+\alpha\right)}{\Gamma\left(n+\alpha+\beta\right)}\frac{\Gamma\left(\alpha+\beta\right)}{\Gamma\left(\alpha\right)}\\
 & \stackrel[0^{+}]{\alpha,\beta}{\longrightarrow}\frac{\alpha}{\alpha+\beta}\frac{1-\gamma\left(\alpha+\beta\right)}{1-\gamma\alpha}\stackrel[0^{+}]{\alpha,\beta}{\longrightarrow}\lambda,\label{eq:useContGamma-2}
\end{align}
where (\ref{eq:useContGamma-2}) uses the continuity of $\Gamma$.
Results (\ref{eq:firstBBinBern}) through (\ref{eq:useContGamma-2})
give the convergence in distribution. Obviously, $\nicefrac{1}{\left(\alpha+\beta+1\right)}\rightarrow1$
as $\alpha,\beta\rightarrow0^{+}$.
\item Fixing $0\leq k\leq n$, we have $\Pr\left(Y_{n}=k\right)$
\begin{align}
 & =\binom{n}{k}\frac{B\left(k+\alpha,n-k+\beta\right)}{B\left(\alpha,\beta\right)}\\
 & \sim\binom{n}{k}\frac{\left(k+\alpha\right)^{k+\alpha-\nicefrac{1}{2}}\left(n-k+\beta\right)^{n-k+\beta-\nicefrac{1}{2}}}{\left(n+\alpha+\beta\right)^{n+\alpha+\beta-\nicefrac{1}{2}}}\frac{\left(\alpha+\beta\right)^{\alpha+\beta-\nicefrac{1}{2}}}{\alpha^{\alpha-\nicefrac{1}{2}}\beta^{\beta-\nicefrac{1}{2}}}\label{eq:useStir}\\
 & =C\left(k,n,\alpha,\beta\right)\binom{n}{k}\left(\frac{\frac{\alpha}{\alpha+\beta}+\frac{k}{\alpha+\beta}}{1+\frac{n}{\alpha+\beta}}\right)^{k}\left(\frac{\frac{\beta}{\alpha+\beta}+\frac{n-k}{\alpha+\beta}}{1+\frac{n}{\alpha+\beta}}\right)^{n-k}\label{eq:useConst}\\
 & \stackrel{\alpha,\beta\rightarrow\infty}{\longrightarrow}\binom{n}{k}\lambda^{k}\left(1-\lambda\right)^{n-k},\label{eq:binRes}
\end{align}
where (\ref{eq:useStir}) uses Stirling's approximation and (\ref{eq:useConst})
and (\ref{eq:binRes}) use
\begin{equation}
C\left(k,n,\alpha,\beta\right)\coloneqq\left(\frac{\frac{\left(1+\frac{k}{\alpha}\right)^{\alpha}}{\sqrt{1+\frac{k}{\alpha}}}\frac{\left(1+\frac{n-k}{\beta}\right)^{\beta}}{\sqrt{1+\frac{n-k}{\beta}}}}{\frac{\left(1+\frac{n}{\alpha+\beta}\right)^{\alpha+\beta}}{\sqrt{1+\frac{n}{\alpha+\beta}}}}\right)\stackrel{\alpha,\beta\rightarrow\infty}{\longrightarrow}\frac{e^{k}e^{n-k}}{e^{n}}=1,
\end{equation}
which gives convergence in distribution. We finally turn to the asymptotic
independence of the $\xi_{i}$. For $2\leq k\leq n$, fix $\mathbf{b}\in\left\{ 0,1\right\} ^{k}$
and $s\coloneqq\sum_{j=1}^{k}b_{j}$. For $1\leq i_{1}<i_{2}<\cdots<i_{k}\leq n$,
let $\boldsymbol{\xi}\coloneqq\left(\xi_{i_{1}},\xi_{i_{2}},\ldots,\xi_{i_{k}}\right)$.
Then, 
\begin{align}
\Pr\left(\boldsymbol{\xi}=\mathbf{b}\right) & =\mathbb{E}\left[\Pr\left(\boldsymbol{\xi}=\mathbf{b}\left|X_{\alpha,\beta}\right.\right)\right]=\mathbb{E}\left[X_{\alpha,\beta}^{s}\left(1-X_{\alpha,\beta}\right)^{k-s}\right]\\
 & =\mathbb{E}\left[X_{\alpha,\beta}^{s}\sum_{j=0}^{k-s}\left(-1\right)^{k-s-j}X_{\alpha,\beta}^{k-s-j}\right]\label{eq:useBinomThm-1}\\
 & =\sum_{j=0}^{k-s}\left(-1\right)^{k-s-j}\mathbb{E}X_{\alpha,\beta}^{k-j}\\
 & =\sum_{j=0}^{k-s}\left(-1\right)^{k-s-j}\prod_{r=0}^{k-j-1}\frac{\alpha+r}{\alpha+\beta+r}\label{eq:useBetaMom}\\
 & \stackrel{\alpha,\beta\rightarrow\infty}{\longrightarrow}\sum_{j=0}^{k-s}\left(-1\right)^{k-s-j}\lambda^{k-j}=\lambda^{s}\left(1-\lambda\right)^{k-s},\label{eq:useBinomThm-2}
\end{align}
where (\ref{eq:useBinomThm-1}) and (\ref{eq:useBinomThm-2}) use
the binomial theorem and (\ref{eq:useBetaMom}) uses the well-known
expression for the $\left(k-j\right)$th raw moment of $\mathrm{Beta}\left(\alpha,\beta\right)$.
This gives the asymptotic independence of the $\xi_{i}$, completing
the proof.
\end{enumerate}
\end{proof}
\begin{lem}
\label{lem:cardCrossProd}For $\mathbf{k}\in\left\{ 1,2,\ldots\right\} ^{m}$
let 
\begin{equation}
\mathcal{S}\coloneqq\left\{ \mathbf{l}\in\bigtimes_{j=1}^{m}\left[k_{j}\right]:l_{1},l_{2},\ldots,l_{m}\textrm{ all distinct}\right\} ,
\end{equation}
where $\left[k\right]\coloneqq\left\{ 1,2,\ldots,k\right\} $ and
$\bigtimes_{j=1}^{m}\left[k_{j}\right]\coloneqq\left[k_{1}\right]\times\left[k_{2}\right]\times\cdots\times\left[k_{m}\right]$,
then 
\begin{equation}
\left|\mathcal{S}\right|=\prod_{j=1}^{m}\left(k_{\left(j\right)}-j+1\right),\label{eq:cardCrossProd}
\end{equation}
where $\left|\mathcal{S}\right|$ gives the number of vectors in \textup{$\mathcal{S}$
and} $k_{\left(1\right)}\leq k_{\left(2\right)}\leq\cdots\leq k_{\left(m\right)}$
gives the elements of $\mathbf{k}$ in a non-decreasing order.
\end{lem}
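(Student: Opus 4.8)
The plan is to reduce to the case where $\mathbf{k}$ is already sorted and then count $\mathcal{S}$ by a sequential (greedy) selection argument. First I would observe that $\left|\mathcal{S}\right|$ depends only on the multiset $\left\{k_1,\ldots,k_m\right\}$: for any permutation $\sigma$ of $[m]$, the map $\mathbf{l}\mapsto\left(l_{\sigma(1)},\ldots,l_{\sigma(m)}\right)$ sends a tuple with $l_i\in[k_i]$ and distinct entries to a tuple with $j$th entry in $\left[k_{\sigma(j)}\right]$ and still distinct entries, and this is a bijection (its inverse applies $\sigma^{-1}$) because the constraint ``all coordinates distinct'' is symmetric. Choosing $\sigma$ with $k_{\sigma(1)}\le\cdots\le k_{\sigma(m)}$, it suffices to prove the identity (\ref{eq:cardCrossProd}) when $k_1\le k_2\le\cdots\le k_m$, so that $k_j=k_{(j)}$ for every $j$.

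Next I would count $\mathcal{S}$ by building $\mathbf{l}=\left(l_1,\ldots,l_m\right)$ one coordinate at a time, in the order $l_1,l_2,\ldots,l_m$. At step $j$ the values $l_1,\ldots,l_{j-1}$ are already fixed, distinct, and each satisfies $l_i\le k_i\le k_j$ because the $k$'s are non-decreasing; hence $\left\{l_1,\ldots,l_{j-1}\right\}$ is a $(j-1)$-element subset of $[k_j]$, and the admissible values for $l_j$ are exactly $[k_j]\setminus\left\{l_1,\ldots,l_{j-1}\right\}$, of which there are $k_j-(j-1)=k_{(j)}-j+1$. Crucially this count does not depend on which earlier values were chosen, so the multiplication principle yields $\left|\mathcal{S}\right|=\prod_{j=1}^{m}\left(k_{(j)}-j+1\right)$.

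The one point requiring care is the degenerate regime in which $\mathcal{S}$ is empty, and I would handle it by showing that $\left|\mathcal{S}\right|=0$ is equivalent to the product vanishing. Set $a_j\coloneqq k_{(j)}-j+1$; then $a_1=k_{(1)}\ge1$ and, since $k_{(\cdot)}$ is non-decreasing, $a_{j+1}-a_j=k_{(j+1)}-k_{(j)}-1\ge-1$. Thus if some $a_j\le0$, the smallest such index $j^{*}$ satisfies $a_{j^{*}-1}\ge1$ and $a_{j^{*}}\ge a_{j^{*}-1}-1\ge0$, forcing $a_{j^{*}}=0$ exactly and hence the product to be $0$; at the same time $a_{j^{*}}=0$ means $k_{(j^{*})}=j^{*}-1$, so $k_{(1)},\ldots,k_{(j^{*})}$ all lie in $[j^{*}-1]$ and cannot take $j^{*}$ distinct values, whence $\mathcal{S}=\emptyset$. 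So the formula holds in every case. There is no deep obstacle here; the only thing to stay vigilant about is not letting the sequential count emit a spurious negative factor, which the case analysis above rules out.
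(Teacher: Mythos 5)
Your proof is correct and follows essentially the same route as the paper's: both arguments sort the $k_{j}$'s, exploit the nesting $\left[k_{\left(1\right)}\right]\subset\left[k_{\left(2\right)}\right]\subset\cdots\subset\left[k_{\left(m\right)}\right]$ so that exactly $j-1$ values are excluded at step $j$, and multiply the branching counts (the paper phrases this as a tree, you as a sequential selection). Your explicit permutation-reduction and your treatment of the degenerate case where the product vanishes are welcome additions of rigor, but they do not change the underlying argument.
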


\begin{proof}
We argue constructively. Imagine a tree with levels $0,1,\ldots,m$.
Level 0 gives the root, level 1 its children, \emph{etc.} Level $1\leq j\leq m$
determines the value of $l_{\left(j\right)}$ for $\mathbf{l}\in\bigtimes_{j=1}^{m}\left[k_{j}\right]$,
where $\left(j\right)$ gives the original index of $k_{\left(j\right)}$.
The root has $k_{\left(1\right)}$ children. Each child of the root,
avoiding its parent's value, has $k_{\left(2\right)}-1$ children.
Each grandchild of the root, avoiding its parent's and grandparent's
values, has $k_{\left(3\right)}-2$ children, \emph{etc.} With $\mathcal{L}$
the set of vectors represented by the leaves, $\left|\mathcal{L}\right|$
appears on the right-hand side of (\ref{eq:cardCrossProd}). Each
leaf, with its unique path back to the root, gives a unique vector
in $\mathcal{S}$, so that $\mathcal{L}\subset\mathcal{S}$. To see
that $\mathcal{S}\subset\mathcal{L}$, note that 
\begin{equation}
\left[k_{\left(1\right)}\right]\subset\left[k_{\left(2\right)}\right]\subset\cdots\subset\left[k_{\left(m\right)}\right].
\end{equation}
That is, any vector in $\mathcal{S}$ first selects $l_{\left(1\right)}$,
then $l_{\left(2\right)}$, \ldots , then $l_{\left(m\right)}$, as
in the tree that constructs $\mathcal{L}$. This completes the proof.
\end{proof}
\RankAsymp*
\begin{proof}
We start with (\ref{eq:deltaNegInfRvec}). Fixing $1\leq i\leq n$
and $\epsilon\in\left(0,1\right]$ we note that
\begin{align}
\lim_{\delta\rightarrow-\infty}\Pr\left(\left|\mathbf{1}_{\left\{ X_{i}\leq X_{0}\right\} }-1\right|\geq\epsilon\right) & =\lim_{\delta\rightarrow-\infty}\Pr\left(\mathbf{1}_{\left\{ X_{i}\leq X_{0}\right\} }=0\right)\\
 & =\lim_{\delta\rightarrow-\infty}\Pr\left(X_{i}>X_{0}\right)\\
 & =\lim_{\delta\rightarrow-\infty}\Phi\left(\nicefrac{\delta}{\sqrt{\rho^{2}+1}}\right)=0,\label{eq:useEThm-1}
\end{align}
where (\ref{eq:useEThm-1}) uses Theorem \ref{thm:theGenMean}. Slutsky's
theorem then implies that $R_{0}\longrightarrow n+1$ in setting (\ref{eq:deltaNegInfRvec}).
Plugging $\lim_{\delta\rightarrow-\infty}\Pr\left(R_{0}=j_{0}\right)=\mathbf{1}_{\left\{ j_{0}=n+1\right\} }$
into (\ref{eq:noR0}) then gives
\begin{equation}
\lim_{\delta\rightarrow-\infty}\Pr\left(R_{i_{1}}=j_{1},\ldots,R_{i_{m}}=j_{m}\right)=\left\{ \begin{array}{rl}
0 & \textrm{ if }j_{k}=n+1\\
\frac{1}{\prod_{k=1}^{m}\left(n-k+1\right)} & \textrm{ otherwise,}
\end{array}\right.
\end{equation}
which gives (\ref{eq:deltaNegInfRvec}). A similar argument yields
(\ref{eq:deltaInfRvec}).

Slutsky's theorem, Proposition \ref{prop:limitab}, Proposition \ref{prop:betaBinomLim}.\ref{enu:opt1},
and Theorem \ref{thm:RONO} imply that $R_{0}\implies1+n\mathrm{Bernoulli}\left(\Phi\left(-\delta\right)\right)$
in setting (\ref{eq:rhoZeroRvec}). Fix $\mathbf{j}\in\mathcal{S}_{n+1,m+1}$
and let $\mathbf{j}_{-0}\coloneqq\left(j_{1},j_{2},\ldots,j_{m}\right)$,
$x_{0}\coloneqq\nicefrac{\left(j_{0}-1\right)}{n}$, and $\mathbf{x}_{0}\coloneqq x_{0}\mathbf{1}_{m}$.
Plugging the above result into (\ref{eq:withR0}) yields three cases:
\begin{enumerate}
\item If $j_{0}=1$, then $x_{0}=0$ and $\Pr\left(R_{0}=1,R_{i_{1}}=j_{1},\ldots,R_{i_{m}}=j_{m}\right)$
\begin{align}
 & =\frac{\Pr\left(R_{0}=1\right)}{\prod_{k=1}^{m}\left(n-k+1\right)}\stackrel[0^{+}]{\rho}{\longrightarrow}\frac{\Phi\left(\delta\right)}{\prod_{k=1}^{m}\left(n-k+1\right)}\\
 & =\Pr\left(\xi=x_{0}\right)\Pr\left(\mathbf{U}_{n,m}=\mathbf{j}_{-0}-\mathbf{1}_{m}+\mathbf{x}_{0}\right),
\end{align}
where we note that $\mathbf{j}_{-0}-\mathbf{1}_{m}+\mathbf{x}_{0}\in\mathcal{S}_{n,m}$
because $j_{0}=1$ and $\mathbf{x}_{0}=\mathbf{0}_{m}$.
\item If $2\leq j_{0}\leq n$, then $x_{0}\in\left(0,1\right)$ and $\Pr\left(R_{0}=j_{0},R_{i_{1}}=j_{1},\ldots,R_{i_{m}}=j_{m}\right)$
\begin{align}
 & =\frac{\Pr\left(R_{0}=j_{0}\right)}{\prod_{k=1}^{m}\left(n-k+1\right)}\stackrel[0^{+}]{\rho}{\longrightarrow}\frac{\Pr\left(1+n\xi=j_{0}\right)}{\prod_{k=1}^{m}\left(n-k+1\right)}\\
 & =0\times0=\Pr\left(\xi=x_{0}\right)\Pr\left(\mathbf{U}_{n,m}=\mathbf{j}_{-0}-\mathbf{1}_{m}+\mathbf{x}_{0}\right),\label{eq:lastStep}
\end{align}
where (\ref{eq:lastStep}) uses $x_{0}\in\left(0,1\right)$ and $\mathbf{j}_{-0}-\mathbf{1}_{m}+\mathbf{x}_{0}\notin\mathcal{S}_{n,m}$.
\item If $j_{0}=n+1$, then $x_{0}=1$ and $\Pr\left(R_{0}=n+1,R_{i_{1}}=j_{1},\ldots,R_{i_{m}}=j_{m}\right)$
\begin{align}
 & =\frac{\Pr\left(R_{0}=n+1\right)}{\prod_{k=1}^{m}\left(n-k+1\right)}\stackrel[0^{+}]{\rho}{\longrightarrow}\frac{\Phi\left(-\delta\right)}{\prod_{k=1}^{m}\left(n-k+1\right)}\\
 & =\Pr\left(\xi=x_{0}\right)\Pr\left(\mathbf{U}_{n,m}=\mathbf{j}_{-0}-\mathbf{1}_{m}+\mathbf{x}_{0}\right),
\end{align}
where we note that $\mathbf{j}_{-0}-\mathbf{1}_{m}+\mathbf{x}_{0}\in\mathcal{S}_{n,m}$
because $j_{0}=n+1$ and $\mathbf{x}_{0}=\mathbf{1}_{m}$.
\end{enumerate}
To review, the above exhaustive cases give $\Pr\left(R_{0}=j_{0},\ldots,R_{i_{m}}=j_{m}\right)$
\begin{equation}
\stackrel[0^{+}]{\rho}{\longrightarrow}\Pr\left(\xi=x_{0}\right)\Pr\left(\mathbf{U}_{n,m}=\mathbf{j}_{-0}-\mathbf{1}_{m}+\mathbf{x}_{0}\right),
\end{equation}
where $\xi\sim\mathrm{Bernoulli}\left(\Phi\left(-\delta\right)\right)$
and $\mathbf{U}_{n,m}\sim\mathrm{Uniform}\left(\mathcal{S}_{n,m}\right)$,
showing that $\xi$ and $\mathbf{U}_{n,m}$ are independent. Further,
$\left(R_{i_{1}},R_{i_{2}},\ldots,R_{i_{m}}\right)\implies\mathbf{1}_{m}-\boldsymbol{\xi}_{m}+\mathbf{U}_{n,m}$
as $\rho\rightarrow0^{+}$, where $\boldsymbol{\xi}_{m}\coloneqq\left(\xi,\xi,\ldots,\xi\right)\in\left\{ \mathbf{0}_{m},\mathbf{1}_{m}\right\} $,
giving (\ref{eq:rhoZeroRvec}).

Slutsky's theorem, Propositions \ref{prop:limitab} and \ref{prop:betaBinomLim}.\ref{enu:opt2},
and Theorem \ref{thm:RONO} imply that $R_{0}\implies1+\mathrm{Binomial}\left(n,\nicefrac{1}{2}\right)$
and $R_{0}\implies1+\mathrm{Binomial}\left(n,\Phi\left(-r\right)\right)$
in settings (\ref{eq:rhoInfRvec}) and (\ref{eq:rhoAbsDeltaInfRvec}).
Proposition \ref{prop:otherRanks} then gives (\ref{eq:rhoInfRvec})
and (\ref{eq:rhoAbsDeltaInfRvec}).

We first show that $\frac{1}{n}\left(R_{0}-1\right)=\frac{1}{n}\sum_{i=1}^{n}\mathbf{1}_{\left\{ X_{i}\leq X_{0}\right\} }\implies Z_{\rho,\delta}$
in setting (\ref{eq:nInfRvec}). In that the support of $Z_{\rho,\delta}$
is bounded, showing that 
\begin{equation}
\lim_{n\rightarrow\infty}\mathbb{E}\left[\left(\frac{1}{n}\sum_{i=1}^{n}\mathbf{1}_{\left\{ X_{i}\leq X_{0}\right\} }\right)^{k}\right]=\mathbb{E}\left[Z_{\rho,\delta}^{k}\right],\label{eq:R0toZGoal}
\end{equation}
for $k\geq1$, gives the result (\citet{B08} \S30). To that end, we
assume that $X_{0}\sim\mathcal{N}\left(0,1\right)$ and that the $\left\{ X_{i}\right\} _{i=1}^{k}$
are i.i.d.\ $\mathcal{N}\left(\delta,\rho^{2}\right)$ (see footnote
\ref{fn:one-case}). Then,
\begin{align}
\mathbb{E}\left[Z_{\rho,\delta}^{k}\right] & =\int_{0}^{1}z^{k}\frac{\rho\phi\left(\delta+\rho\Phi^{-1}\left(z\right)\right)}{\phi\left(\Phi^{-1}\left(z\right)\right)}dz=\int_{-\infty}^{\infty}\Phi\left(\frac{y-\delta}{\rho}\right)^{k}\phi\left(y\right)dy\\
 & =\mathbb{E}\left[\Pr\left(X_{1}\leq X_{0},X_{2}\leq X_{0},\ldots,X_{k}\leq X_{0}\left|X_{0}\right.\right)\right]\\
 & =\Pr\left(X_{1}\leq X_{0},X_{2}\leq X_{0},\ldots,X_{k}\leq X_{0}\right).\label{eq:EZk}
\end{align}
We further have that, as $n$ becomes large, $\mathbb{E}\left[\left(\frac{1}{n}\sum_{i=1}^{n}\mathbf{1}_{\left\{ X_{i}\leq X_{0}\right\} }\right)^{k}\right]$
\begin{align}
 & =\frac{1}{n^{k}}\sum_{j=1}^{k}\Pr\left(X_{1}\leq X_{0},X_{2}\leq X_{0},\ldots,X_{j}\leq X_{0}\right)\prod_{i=1}^{j}\left(n-i+1\right)\\
 & =\Pr\left(X_{1}\leq X_{0},X_{2}\leq X_{0},\ldots,X_{k}\leq X_{0}\right)\prod_{i=1}^{k}\left(1-\frac{i-1}{n}\right)+\mathcal{O}\left(\frac{1}{n}\right)\\
 & \stackrel[\infty]{n}{\longrightarrow}\Pr\left(X_{1}\leq X_{0},X_{2}\leq X_{0},\ldots,X_{k}\leq X_{0}\right).\label{eq:limAve}
\end{align}
That is, (\ref{eq:EZk}) and (\ref{eq:limAve}) give (\ref{eq:R0toZGoal}),
and so $\frac{1}{n}\left(R_{0}-1\right)\implies Z_{\rho,\delta}$
in setting (\ref{eq:nInfRvec}).

We turn to the asymptotic distribution of $\mathbf{R}_{\mathbf{i}}\coloneqq\left(R_{i_{1}},R_{i_{2}},\ldots,R_{i_{m}}\right)$.
First, fix $\mathbf{x}\in\left(0,1\right)^{m}$ and, for $1\leq j\leq m$,
let $k_{n,j}\coloneqq\left\lfloor x_{j}n+1\right\rfloor $, so that
$\nicefrac{k_{n,j}}{n}\rightarrow x_{j}$ as $n\rightarrow\infty$.
Then, for $n$ large, we have $\Pr\left(n^{-1}\left(\mathbf{R}_{\mathbf{i}}-\mathbf{1}_{m}\right)\leq\mathbf{x}\right)=\Pr\left(\mathbf{R}_{\mathbf{i}}\leq\mathbf{k}_{n}\right)$
\begin{align}
 & =\sum_{\mathbf{l}\in\mathcal{T}_{n,m}}\Pr\left(\mathbf{R}_{\mathbf{i}}=\mathbf{l}\right)=\sum_{\mathbf{l}\in\mathcal{T}_{n,m}}\frac{1-\sum_{j=1}^{m}\Pr\left(R_{0}=l_{j}\right)}{\prod_{j=1}^{m}\left(n-j+1\right)}\label{eq:useNoR0}\\
 & =\frac{\left|\mathcal{T}_{n,m}\right|}{\prod_{j=1}^{m}\left(n-j+1\right)}-\frac{\sum_{j=1}^{m}\sum_{\mathbf{l}\in\mathcal{T}_{n,m}}\Pr\left(R_{0}=l_{j}\right)}{\prod_{j=1}^{m}\left(n-j+1\right)}\\
 & =\left(\prod_{j=1}^{m}\frac{k_{n,j}+\mathcal{O}\left(1\right)}{n-j+1}\right)\left(1-\sum_{j=1}^{m}\frac{\sum_{l_{j}=1}^{k_{n,j}}\Pr\left(R_{0}=l_{j}\right)}{k_{n,j}+\mathcal{O}\left(1\right)}\right)\\
 & =\prod_{j=1}^{m}\frac{k_{n,j}+\mathcal{O}\left(1\right)}{n-j+1}+\mathcal{O}\left(\frac{1}{n}\right)\stackrel[\infty]{n}{\longrightarrow}\prod_{j=1}^{m}x_{j},\label{eq:limToUnif}
\end{align}
where the notation $\mathbf{a}\leq\mathbf{b}$ indicates that $a_{j}\leq b_{j}$
$\forall j$, and (\ref{eq:useNoR0}) uses (\ref{eq:noR0}) and 
\begin{equation}
\mathcal{T}_{n,m}\coloneqq\left\{ \mathbf{l}\in\bigtimes_{j=1}^{m}\left[k_{n,j}\right]:l_{1},l_{2},\ldots,l_{m}\textrm{ all distinct}\right\} ,\label{eq:defSnm}
\end{equation}
so that, for $k_{n,\left(1\right)}\leq k_{n,\left(2\right)}\leq\cdots\leq k_{n,\left(m\right)}$,
$\left|\mathcal{T}_{n,m}\right|=\prod_{j=1}^{m}\left(k_{n,\left(j\right)}-j+1\right)$
(see Lemma \ref{lem:cardCrossProd}). Result (\ref{eq:limToUnif})
gives $\frac{1}{n}\left(\mathbf{R}_{\mathbf{i}}-\mathbf{1}_{m}\right)\implies\boldsymbol{\Upsilon}_{m}$
in setting (\ref{eq:nInfRvec}). 

We finally turn to the asymptotic independence of $R_{0}$ and $\mathbf{R}_{\mathbf{i}}$.
As above, we fix $\mathbf{x}\in\left(0,1\right)^{m+1}$ and, for $0\leq j\leq m$,
let $k_{n,j}\coloneqq\left\lfloor x_{j}n+1\right\rfloor $, so that
$\nicefrac{k_{n,j}}{n}\rightarrow x_{j}$ as $n\rightarrow\infty$.
Then, for $n$ large, we have $\Pr\left\{ n^{-1}\left[\mathbf{R}_{0,\mathbf{i}}-\mathbf{1}_{m+1}\right]\leq\mathbf{x}\right\} $
\begin{align}
 & =\Pr\left\{ \mathbf{R}_{0,\mathbf{i}}\leq\mathbf{k}_{n}\right\} =\sum_{\mathbf{l}\in\mathcal{T}_{n,m+1}}\Pr\left\{ \mathbf{R}_{0,\mathbf{i}}=\mathbf{l}\right\} \label{eq:useDefSnm}\\
 & =\frac{1}{\prod_{j=1}^{m}\left(n-j+1\right)}\sum_{\mathbf{l}\in\mathcal{T}_{n,m+1}}\Pr\left(R_{0}=l_{0}\right)\label{eq:useWithR0}\\
 & =\frac{\prod_{j=0}^{m}\left(k_{n,j}+\mathcal{O}\left(1\right)\right)}{\left(k_{n,0}+\mathcal{O}\left(1\right)\right)\prod_{j=1}^{m}\left(n-j+1\right)}\sum_{l_{0}=1}^{k_{n,0}}\Pr\left(R_{0}=l_{0}\right)\label{eq:useSizeS}\\
 & =\left(\prod_{j=1}^{m}\frac{k_{n,j}+\mathcal{O}\left(1\right)}{n-j+1}\right)\Pr\left(\frac{R_{0}-1}{n}\leq x_{0}\right)\\
 & \stackrel[\infty]{n}{\longrightarrow}\Pr\left(\boldsymbol{\Upsilon}_{m}\leq\mathbf{x}_{-0}\right)\Pr\left(Z_{\rho,\delta}\leq x_{0}\right),\label{eq:useConvDef}
\end{align}
where (\ref{eq:useDefSnm}) and (\ref{eq:useSizeS}) use (\ref{eq:defSnm})
with $0\leq j\leq m$, (\ref{eq:useWithR0}) uses (\ref{eq:withR0}),
and (\ref{eq:useConvDef}) uses $\mathbf{x}_{-0}\coloneqq\left(x_{1},x_{2},\ldots,x_{m}\right)$
and $\frac{1}{n}\left(R_{0}-1\right)\implies Z_{\rho,\delta}$, as
$n\rightarrow\infty$. The factored form of (\ref{eq:useConvDef})
gives asymptotic independence, and so (\ref{eq:nInfRvec}).
\end{proof}

\end{document}